\documentclass{article}
\usepackage[text={170mm,254mm}]{geometry}
\usepackage{graphicx}
\usepackage[T1]{fontenc}
\usepackage[utf8]{inputenc}

\usepackage{amssymb, amsthm, amsfonts, amsmath, enumitem, mathtools}
\usepackage{booktabs, array, diagbox, xcolor, multicol, rotating, stmaryrd}

\usepackage{algorithmicx}
\usepackage{algpseudocode}

\usepackage[hyphens]{url}
\usepackage[colorlinks=true,citecolor=black,linkcolor=black,urlcolor=blue]{hyperref}
\usepackage{cite}

\usepackage{subcaption}
\usepackage{float}

\newcommand{\arxiv}[2]{\href{https://arxiv.org/abs/#1}{\texttt{arXiv:#1}} \texttt{[#2]}}
\newcommand{\doi}[1]{\url{https://doi.org/#1}}

\theoremstyle{plain}
\newtheorem{theorem}{Theorem}[section]
\newtheorem{lemma}[theorem]{Lemma}
\newtheorem{conjecture}[theorem]{Conjecture}
\newtheorem{proposition}[theorem]{Proposition}
\newtheorem{corollary}[theorem]{Corollary}
\theoremstyle{definition}
\newtheorem{definition}[theorem]{Definition}
\newtheorem{example}[theorem]{Example}

\usepackage{listings}
\definecolor{mauve}{rgb}{0.58,0,0.82}
\definecolor{dkgreen}{rgb}{0,0.6,0}
\lstdefinestyle{pitonche} {
    language = Python,
    basicstyle = footnotesizettfamily,
    showspaces = false,
    showstringspaces = false,
    breakautoindent = true,
    flexiblecolumns = true,
    keepspaces = true,
    stepnumber = 1,
    xleftmargin = 0pt
}
\usepackage{tikz}

\newcommand{\Iverson}[1]{\left\llbracket #1 \right\rrbracket}

\DeclareMathOperator{\Sym}{Sym}

\let\le=\leqslant
\let\ge=\geqslant

\usepackage{authblk}

\title{Some results on small ordered and cyclic Ramsey numbers}

\author[1,2,3]{Nino Bašić}
\author[1,4]{Ivan Damnjanović\thanks{Corresponding author. Email:\ \texttt{ivan.damnjanovic@elfak.ni.ac.rs} (I.\ Damnjanović).}}
\author[5]{Dragan Stevanović\thanks{On leave from the Mathematical Institute of the Serbian Academy of Sciences and Arts.}}
\author[6]{Ivan Stošić}

\affil[1]{FAMNIT, University of Primorska, Koper, Slovenia, Glagoljaška 8, Koper, 6000, Slovenia}
\affil[2]{IAM, University of Primorska, Koper, Slovenia, Muzejski trg 2, Koper, 6000, Slovenia}
\affil[3]{Institute of Mathematics, Physics and Mechanics, Ljubljana, Slovenia, Jadranska ulica 19, Ljubljana, 1000, Slovenia}
\affil[4]{Faculty of Electronic Engineering, University of Niš, Aleksandra Medvedeva 4, Niš, 18104, Serbia}
\affil[5]{College of Integrative Studies, Abdullah Al Salem University, Firdous Street, Block 3, Khaldiya, 72303, Kuwait}
\affil[6]{Faculty of Sciences and Mathematics, University of Niš, Višegradska 33, Niš, 18106, Serbia}

\date{}

\begin{document}

\maketitle

\begin{abstract}
Let $k \in \mathbb{N}$ and let $H_1, H_2, \ldots, H_k$ be simple graphs such that for each $j \in \{ 1, 2, \ldots, k \}$, the vertex set of $H_j$ is $\{ 0, 1, 2, \ldots, n_j - 1 \}$ for some $n_j \in \mathbb{N}$. The ordered Ramsey number $R_\mathrm{ord}(H_1, H_2, \ldots, H_k)$ is the smallest $n \in \mathbb{N}$ for which every $k$-edge-coloring of the complete graph on the vertex set $\{ 0, 1, 2, \ldots, n - 1 \}$ contains $H_j$ as a monochromatic subgraph of color $j$ for some $j \in \{ 1, 2, \ldots, k \}$, with the vertices appearing in the same order as in $H_j$. Inspired by the work of Poljak, we apply the Kissat SAT solver to determine new small two-color ordered Ramsey numbers of various classes of graphs: monotone paths, monotone cycles, alternating paths, stars, complete graphs and nested matchings. In addition, we introduce the cyclic Ramsey numbers $R_\mathrm{cyc}(H_1, H_2, \ldots, H_k)$ as a natural relaxation of the ordered Ramsey numbers, and once again use Kissat to determine various such numbers for the two-color case. By observing structural patterns in the computational results, we determine all ordered or cyclic Ramsey numbers for several pairs of classes of graphs. Furthermore, we obtain some bounds on ordered and cyclic Ramsey numbers where one argument is a connected graph, while the other is a monotone path or a monotone cycle. We also explore how reinforcement learning can be used through the recently developed Reinforcement Learning for Graph Theory (RLGT) framework to obtain lower bounds on ordered and cyclic Ramsey numbers. Finally, we introduce the permutational Ramsey numbers to show how the different Ramsey-type formulations involving standard, ordered and cyclic Ramsey numbers can be unified within a group-theoretic framework.
\end{abstract}

\bigskip\noindent
{\bf Keywords:} Ramsey numbers, ordered Ramsey numbers, cyclic Ramsey numbers, permutational Ramsey numbers, SAT, reinforcement learning.

\bigskip\noindent
{\bf Mathematics Subject Classification (2020):} 05D10, 05C55.

\section{Introduction}

In this paper, we consider all graphs to be undirected, simple and finite, and we assume each graph to have a vertex set of the form $\{ 0, 1, 2, \ldots, n - 1 \}$ for some $n \in \mathbb{N}$. Moreover, we use $|G|$ to denote the order, i.e., the number of vertices, of a graph $G$, and $K_n$ to denote the complete graph of order $n$. We consider two graphs $G_1$ and $G_2$ to be equal, and denote this by $G_1 = G_2$, if they have the same vertex set and the same edge set. In addition, we say that two graphs $G_1$ and $G_2$ are \emph{rotationally isomorphic}, and write $G_1 \cong_\rho G_2$, if there exists an isomorphism $\varphi \colon V(G_1) \to V(G_2)$ of the form $\varphi(v) = (v + s) \bmod |G_1|$ for some $s \in \mathbb{Z}$.

By Ramsey's theorem \cite{Ramsey1930} from 1930, for any choice of graphs $H_1, H_2, \ldots, H_k$ with $k \in \mathbb{N}$, every $k$-edge-coloring of $K_n$ with sufficiently large $n \in \mathbb{N}$ contains $H_j$ as a monochromatic subgraph of color $j$ for some $j \in \{ 1, 2, \ldots, k \}$. The smallest $n$ for which the asserted claim holds is then referred to as the \emph{Ramsey number} $R(H_1, H_2, \ldots, H_k)$. Despite its simple formulation, the problem of computing Ramsey numbers turns out to be quite challenging. For instance, the Ramsey number $R(K_5, K_5)$ is still not known as of today \cite{Radziszowski}. Over the last century, many mathematicians have taken an interest in studying Ramsey numbers and other Ramsey-type results pertaining to generalizations of graphs, geometric configurations, and sequences; see \cite{ConFoxSu2015, ErSze1935, FraPaReiRo2018, GraRothSpe1990, LiLin2022, Morris2026, NeRo1990, Nguyen2014, Radziszowski} and references therein.

One of the branches of Ramsey theory that has recently attracted considerable interest from researchers is the study of ordered Ramsey numbers. For any graphs $H_1, H_2, \ldots, H_k$ with $k \in \mathbb{N}$, the \emph{ordered Ramsey number} $R_\mathrm{ord}(H_1, H_2, \ldots, H_k)$ is defined as the smallest $n \in \mathbb{N}$ for which every $k$-edge-coloring of $K_n$ contains $H_j$ as a monochromatic subgraph of color $j$ for some $j \in \{ 1, 2, \ldots, k \}$, with the vertices appearing in the same order as in $H_j$. The definition is essentially the same as that of the Ramsey number $R(H_1, H_2, \ldots, H_k)$, with the difference that the embedding $\varphi \colon V(H_j) \to V(K_n)$ is additionally required to be increasing. We note that Ramsey's theorem implies that the ordered Ramsey numbers are well-defined. In fact, the following result is immediate.

\begin{proposition}
For any graphs $H_1, H_2, \ldots, H_k$ with $k \in \mathbb{N}$, we have
\[
    R(H_1, H_2, \ldots, H_k) \le R_\mathrm{ord}(H_1, H_2, \ldots, H_k) \le R(K_{|H_1|}, K_{|H_2|}, \ldots, K_{|H_k|}) .
\]
In particular, $R_\mathrm{ord}(K_{n_1}, K_{n_2}, \ldots, K_{n_k}) = R(K_{n_1}, K_{n_2}, \ldots, K_{n_k})$ for any $k \in \mathbb{N}$ and $n_1, n_2, \ldots, n_k \in \mathbb{N}$.
\end{proposition}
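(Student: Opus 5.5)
The plan is to prove each inequality by comparing the sets of embeddings that witness a monochromatic copy, and then to read off the equality for complete graphs.

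\textbf{Lower bound.} Set $n = R_\mathrm{ord}(H_1, \ldots, H_k)$ and fix any $k$-edge-coloring of $K_n$. By the definition of the ordered Ramsey number, there is an index $j$ and an increasing injective map $\varphi \colon V(H_j) \to V(K_n)$ such that every edge $uv$ of $H_j$ is sent to an edge $\varphi(u)\varphi(v)$ of color $j$. Forgetting that $\varphi$ is increasing, it is still an injective homomorphism into the color-$j$ subgraph. So the coloring contains $H_j$ as a monochromatic subgraph of color $j$ in the ordinary sense. Since the coloring was arbitrary, $n$ lies in the set whose minimum is $R(H_1, \ldots, H_k)$, which gives $R(H_1,\ldots,H_k) \le n$.

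\textbf{Upper bound.} Set $N = R(K_{|H_1|}, \ldots, K_{|H_k|})$; this is finite by Ramsey's theorem. Take any $k$-edge-coloring of $K_N$. It contains, for some $j$, a monochromatic $K_{|H_j|}$ of color $j$ on some vertex set $S \subseteq \{0, \ldots, N-1\}$ with $|S| = |H_j|$. Let $\varphi \colon \{0, \ldots, |H_j|-1\} \to S$ be the unique increasing bijection. Every pair of vertices of $S$ spans an edge of color $j$. Hence every edge of $H_j$ is mapped to an edge of color $j$, and $\varphi$ is an increasing embedding of $H_j$ into color class $j$. Therefore $N$ has the ordered Ramsey property, and $R_\mathrm{ord} \le N$.

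One minor point needs checking: the ordered Ramsey property must hold for all $n \ge N$, not just $n = N$, so that "smallest $n$" is the right notion. This follows because restricting a coloring of $K_n$ to $K_N$ on the vertices $\{0,\ldots,N-1\}$ preserves the order, so any witness found there is a witness in $K_n$. In any case, the definition as a minimum already gives $R_\mathrm{ord} \le N$ directly.

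\textbf{Equality for complete graphs.} Taking $H_j = K_{n_j}$ makes the two outer quantities coincide, since $|K_{n_j}| = n_j$. The chain of inequalities then collapses to equality.

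No step is a real obstacle. The only care needed is in the upper bound: the clique's vertex set must be relabeled by the order-preserving bijection. This works because cliques are invariant under any relabeling.
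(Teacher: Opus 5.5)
Your proposal is correct and is the routine argument the paper has in mind when it calls this result immediate; the paper writes out no proof. An increasing embedding is in particular an embedding, and a monochromatic $K_{|H_j|}$ relabeled by its increasing bijection hosts an increasing copy of $H_j$, which also shows that $R_\mathrm{ord}$ is finite, as your lower-bound step tacitly assumes.
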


The systematic study of ordered Ramsey numbers was initiated independently by Balko, Cibulka, Král and Kynčl \cite{BalCiKralKyn2015, BalCiKralKyn2020}, and Conlon, Fox, Lee and Sudakov \cite{ConFoxLeeSu2017}. Over the next decade, many researchers have taken an interest in studying the asymptotic properties of various classes of ordered Ramsey numbers; see, e.g., \cite{BalCiKralKyn2015, BalCiKralKyn2020, BalJeVa2019, BalPolj2023, BalPolj2024, BraMoSuWi2024, ConFoxLeeSu2017, CoxSto2016, GiJaJa2024, GiJinSu2024, Grinerova2024, MuSuk2024, NeiWest2019, Poljak2023, Rohatgi2018}, and the survey \cite{Balko2025} and references therein. Here, we take a different direction and are primarily interested in computing the exact values of small ordered Ramsey numbers.

For each $n \in \mathbb{N}$, let $P_n^\mathrm{mon}$ denote the \emph{monotone path} graph of order $n$, i.e., the graph of order $n$ in which two vertices are adjacent if and only if they are consecutive integers. Using a similar strategy as when proving the well-known Erdős--Szekeres theorem on monotonic subsequences \cite{ErSze1935}, it is not difficult to obtain the following result; see \cite{BalCiKralKyn2015, BalCiKralKyn2020, ChoPo2002, ConFoxLeeSu2017, MiStoWest2015}.

\begin{theorem}[\hspace{1sp}{\cite[Proposition~10]{BalCiKralKyn2020}}]\label{pmon_pmon_ord_th}
    For any $k \in \mathbb{N}$ and $n_1, n_2, \ldots, n_k \in \mathbb{N}$, we have
    \[
        R_\mathrm{ord}(P_{n_1}^\mathrm{mon}, P_{n_2}^\mathrm{mon}, \ldots, P_{n_k}^\mathrm{mon}) = 1 + \prod_{j = 1}^k (n_j - 1) .
    \]
\end{theorem}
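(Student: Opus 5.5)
We prove the two inequalities separately, following the Erdős--Szekeres labelling argument. Write $N = \prod_{j=1}^k (n_j - 1)$.

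For the upper bound, fix any $k$-edge-coloring of $K_{N+1}$ on $\{0, 1, \ldots, N\}$. Suppose, for contradiction, that for no $j$ is there a monotone path of color $j$ with $n_j$ vertices. To each vertex $v$ assign the vector $\ell(v) = (\ell_1(v), \ldots, \ell_k(v))$, where $\ell_j(v)$ is the number of vertices of the longest monotone path of color $j$ ending at $v$. Here a monotone path of color $j$ means vertices $u_1 < u_2 < \cdots < u_m = v$ with every edge $u_iu_{i+1}$ of color $j$, and a single vertex counts as a path with $1$ vertex. By assumption, $1 \le \ell_j(v) \le n_j - 1$ for all $j$, so $\ell$ takes at most $N$ values.

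The key claim is that $\ell$ is injective. Take $u < v$ and let $j$ be the color of the edge $uv$. Appending $v$ to a longest color-$j$ monotone path ending at $u$ gives such a path ending at $v$, so $\ell_j(v) \ge \ell_j(u) + 1$. Hence $\ell(u) \neq \ell(v)$. This gives $N+1$ distinct vectors from a set of size at most $N$, a contradiction. Therefore $R_\mathrm{ord} \le N + 1$.

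For the lower bound, we must color $K_N$ on $\{0, \ldots, N-1\}$ with no color-$j$ monotone path on $n_j$ vertices for every $j$. Identify each vertex $x$ with its mixed-radix digit vector $(d_1, \ldots, d_k)$, where $0 \le d_j \le n_j - 2$. Order the vertices lexicographically, with $d_1$ most significant; this is exactly the integer order. For $x < y$, color the edge $xy$ with the smallest index $j$ at which their digits differ. Along a monotone path of color $j$, consecutive vertices agree on digits $1, \ldots, j-1$ and then increase strictly at digit $j$. So the entire path shares digits $1, \ldots, j-1$, and its $j$-th digits strictly increase along the path. Since that digit takes only $n_j - 1$ values, the path has at most $n_j - 1$ vertices.

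Equivalently, the lower bound follows by induction on $k$: take $n_1 - 1$ consecutive blocks, each carrying an extremal coloring for $(n_2, \ldots, n_k)$, and give all edges between different blocks color $1$. Degenerate cases where some $n_j = 1$ are trivial: then $N = 0$, and $R_\mathrm{ord} = 1$ because a single vertex already forms $P_1^\mathrm{mon}$.

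Neither direction is hard. The only point that needs care is checking, in the lower-bound construction, that the lexicographic order agrees with the integer order, so that monotone paths do correspond to strictly increasing $j$-th digits within a fixed prefix.
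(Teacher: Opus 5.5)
Your proof is correct, including the degenerate case where some $n_j = 1$. It is the Erdős--Szekeres-type labelling argument that the paper invokes for this result; the paper does not prove the theorem itself but cites \cite[Proposition~10]{BalCiKralKyn2020} and points to that strategy, and your block-based lower-bound construction is the same idea the paper uses with $f(x) = \lfloor x/(a-1) \rfloor$ in the proof of Theorem~\ref{pmon_con_th}.
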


\noindent
Moreover, it turns out that in the two-color case, if we replace one of the monotone paths with a complete graph, the ordered Ramsey number stays the same, as implicitly shown by Károlyi, Pach and Tóth; see \cite{BalCiKralKyn2020, KaPaTo1997}.

\begin{theorem}[\hspace{1sp}{\cite[Lemma~18]{BalCiKralKyn2020}}]\label{pmon_k_th}
    For any $a, b \in \mathbb{N}$, we have $R_\mathrm{ord}(P_a^\mathrm{mon}, K_b) = 1 + (a - 1)(b - 1)$.
\end{theorem}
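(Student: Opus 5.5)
Both inequalities are needed. The lower bound $R_\mathrm{ord}(P_a^\mathrm{mon}, K_b) \ge 1 + (a-1)(b-1)$ needs a coloring of $K_{(a-1)(b-1)}$ with neither a color-1 monotone path on $a$ vertices nor a color-2 $K_b$. The upper bound needs to show that every coloring of $K_n$ with $n = 1 + (a-1)(b-1)$ contains one of the two.

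\textbf{Upper bound.} Color $1$ is the path color and color $2$ is the clique color. Fix a coloring of $K_n$ with $n = 1 + (a-1)(b-1)$. To each vertex $v$ assign the label $\ell(v)$, the number of vertices of the longest monotone path in color $1$ that ends at $v$; this is a path whose vertices increase and whose last vertex is $v$. If some $\ell(v) \ge a$, we already have a color-1 $P_a^\mathrm{mon}$. Otherwise every label lies in $\{1, \ldots, a-1\}$. By pigeonhole, some label class has at least $b$ vertices.

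Now take two vertices $u < v$ in the same class. If the edge $uv$ had color $1$, appending $v$ to the longest path ending at $u$ would give $\ell(v) \ge \ell(u) + 1$, which is a contradiction. So every edge inside a label class has color $2$, and a class with at least $b$ vertices contains a color-2 $K_b$. Order plays no role for the clique, since any $b$ vertices of $K_n$ in increasing order form an ordered copy of $K_b$. This is exactly the Erdős--Szekeres labeling argument behind Theorem~\ref{pmon_pmon_ord_th}.

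\textbf{Lower bound.} Use the standard blow-up construction. Split $\{0, \ldots, (a-1)(b-1)-1\}$ into $b-1$ consecutive intervals $I_1, \ldots, I_{b-1}$, each of size $a-1$.
\begin{itemize}
\item Inside each interval, color every edge $1$. Any monotone color-1 path then stays within one interval, so it has at most $a-1$ vertices.
\item Between different intervals, color every edge $2$. Any color-2 clique then uses at most one vertex per interval, so it has at most $b-1$ vertices.
\end{itemize}
The blocks are consecutive, so a color-1 monotone path cannot jump between blocks: any edge between blocks has color $2$. The construction therefore avoids both target graphs, which proves the lower bound.

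Neither step looks like a real obstacle. The only point needing care is the argument that edges inside a label class cannot have color $1$. That relies on the path being monotone, so that it can be extended at its right end by a larger vertex, and on the label being defined as the longest path ending at $v$, not starting there. Both bounds together give $R_\mathrm{ord}(P_a^\mathrm{mon}, K_b) = 1 + (a-1)(b-1)$.
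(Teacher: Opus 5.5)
Your proof is correct. The paper states this result without proof, citing Balko et al., and only uses it as an input (for example, for the upper bound in Theorem~\ref{pmon_con_th}), so there is no in-paper argument to match. The comparison is therefore with the arguments around it.

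Your upper bound is the Erdős--Szekeres labeling argument that the paper alludes to before Theorem~\ref{pmon_pmon_ord_th}, and it is self-contained.

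Your lower bound is the unordered Chvátal construction. Color $1$ forms $b-1$ disjoint copies of $K_{a-1}$, so it excludes every connected graph on $a$ vertices regardless of vertex order. It even gives $R(P_a, K_b) \ge 1 + (a-1)(b-1)$. Consequently, your remark that the blocks must be consecutive is unnecessary.

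The construction the paper uses in Theorem~\ref{pmon_con_th} is the dual one. Applied to $K_b$ versus $P_a^\mathrm{mon}$, it takes $a-1$ consecutive blocks of size $b-1$, with the clique color inside blocks and the path color between them. There consecutiveness is essential, because a monotone path in the between-block color must strictly increase its block index.

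Within the paper's framework, the lower bound is also immediate from Theorem~\ref{pmon_pmon_ord_th}. Since $P_b^\mathrm{mon}$ is an ordered subgraph of $K_b$, we get $R_\mathrm{ord}(P_a^\mathrm{mon}, K_b) \ge R_\mathrm{ord}(P_a^\mathrm{mon}, P_b^\mathrm{mon}) = 1 + (a-1)(b-1)$.
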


Now, for any $n \ge 3$, let $C_n^\mathrm{mon}$ denote the \emph{monotone cycle} graph of order $n$, which arises from $P_n^\mathrm{mon}$ by adding the edge $\{ 0, n - 1 \}$. Also, for convenience, let $C_2^\mathrm{mon} \coloneqq K_2$. The following theorem was recently proved by Balko et al.\ \cite{BalCiKralKyn2020}, thereby extending an earlier result by Károlyi, Pach, Tóth and Valtr \cite{KaPaToVal1998}.

\begin{theorem}[\hspace{1sp}{\cite[Theorem~4]{BalCiKralKyn2020}}]\label{cyc_cyc_ord_th}
    For any $a, b \ge 2$, we have $R_\mathrm{ord}(C_a^\mathrm{mon}, C_b^\mathrm{mon}) = 2ab - 3a - 3b + 6$.
\end{theorem}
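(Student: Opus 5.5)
We would prove the two inequalities separately, using the form
\[
2ab-3a-3b+6 = 2(a-2)(b-2) + (a-1) + (b-1) .
\]
First we dispose of the degenerate cases. If $a = 2$, then $C_2^\mathrm{mon} = K_2$. The statement then reduces to: either some edge is red, or every edge is blue and $K_b$ contains $C_b^\mathrm{mon}$. Both directions are immediate, and they give the value $b$, which matches the formula. The case $b = 2$ is symmetric, so from now on $a, b \ge 3$.

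\textbf{Lower bound.} We need a 2-coloring of $K_{N-1}$, with $N = 2ab-3a-3b+6$, that has no red $C_a^\mathrm{mon}$ and no blue $C_b^\mathrm{mon}$. We would build it from the extremal Erdős--Szekeres-type coloring of Theorem~\ref{pmon_pmon_ord_th}, which has $(a-2)(b-2)$ vertices and no red $P_{a-1}^\mathrm{mon}$ and no blue $P_{b-1}^\mathrm{mon}$. Take two copies of it, placed as a left block $L$ and a right block $R'$, the latter mirrored. Insert between them a monochromatic ``spine'' of $a-2$ vertices inducing a red clique and $b-2$ vertices inducing a blue clique; this is where the extra $a+b-3$ vertices come from. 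The colors across blocks are chosen by an interval rule. The aim is that any monotone cycle must put its first and last vertex in positions whose connecting edge has the ``wrong'' color unless the cycle is too short. The check is a case analysis on which blocks contain the two endpoints of the cycle. The closing edge $\{v_0, v_{a-1}\}$ forces both endpoints into a configuration where the path between them is confined to one copy plus at most one spine vertex, and that copy contains no monotone path of the required length. We expect to tune the exact cross-block colors by testing small cases such as $a = b = 3$, where $N - 1 = 5$.

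\textbf{Upper bound.} Fix a coloring of $K_N$ and assume there is no red $C_a^\mathrm{mon}$ and no blue $C_b^\mathrm{mon}$. For each vertex $v$ we record four quantities: the longest red and blue monotone paths ending at $v$, and the longest red and blue monotone paths starting at $v$. The Erdős--Szekeres argument behind Theorem~\ref{pmon_pmon_ord_th} shows that vertices with equal ``ending'' labels form a set in which all edges are forced in a controlled way. We then consider a red edge $\{u, w\}$ with $u < w$ that is extremal in the sense of spanning far. A red monotone path from $u$ to $w$ with exactly $a$ vertices closes a red $C_a^\mathrm{mon}$. So every red path from $u$ to $w$ must have length $\neq a$; using that paths can be shortened by red chords, we upgrade this to a bound on all red paths between them. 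Combining the prefix labels on $[0, w]$ with the suffix labels on $[u, N-1]$, we would show that each side admits at most $(a-2)(b-2)$ ``generic'' vertices plus a bounded number of special vertices. This gives at most $2(a-2)(b-2) + a + b - 3$ vertices, a contradiction.

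The main obstacle is this exactness issue. A monotone cycle requires a path of exactly $a$ vertices whose endpoints are adjacent in the same color; a longer red path does not directly yield a shorter cycle. Our first attempt would be a shortcut lemma. Take a longest red monotone path between two vertices joined by a red edge. Either some chord lets us drop exactly the right number of vertices, or the non-chords are all blue. In the latter case they produce long blue monotone paths whose endpoints are joined by blue edges, yielding a blue $C_b^\mathrm{mon}$. Making this interplay precise is where the counting $2(a-2)(b-2)$ versus the extra $a+b-2$ must come out exactly. We would follow the strategy of Károlyi, Pach, Tóth and Valtr~\cite{KaPaToVal1998} for the diagonal case as a guide.
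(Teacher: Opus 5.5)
Both directions have genuine gaps, so neither bound is actually proved.

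\textbf{Upper bound.} The step carrying all the weight is never established. This is the ``shortcut lemma'' together with the claim that each side has at most $(a-2)(b-2)$ generic vertices plus boundedly many special ones. The proposed dichotomy also does not close. If the chords $v_iv_{i+a-1}$ of a long red path are all blue, you get long blue monotone paths. But turning them into a blue monotone cycle with \emph{exactly} $b$ vertices is the same exactness problem in the other colour.

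The Erd\H{o}s--Szekeres labels give no finite grid here either, because forbidding monochromatic monotone cycles does not bound monochromatic monotone paths. For example, in the pentagon colouring of $K_5$ with red edges $01,12,23,34,04$, which is extremal for $a=b=3$, the red edges $01,12,23,34$ form a monotone path on all five vertices.

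The missing idea is to fix the first vertex of the cycle to be $0$ and look for the rest of the cycle inside one colour class of its neighbourhood. Let $R_0$ and $B_0$ be the sets of vertices $v>0$ with $\{0,v\}$ red and blue, respectively.
\begin{itemize}
\item A red $P_{a-1}^{\mathrm{mon}}$ inside $R_0$ closes with $0$ to a red $C_a^{\mathrm{mon}}$. Exactness is automatic, since every vertex of the path, in particular each endpoint, is red-adjacent to $0$.
\item A blue $K_b$ contains a blue $C_b^{\mathrm{mon}}$.
\end{itemize}
So Theorem~\ref{pmon_k_th} applied to the colouring induced on $R_0$ gives $|R_0|\le(a-2)(b-1)$. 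Symmetrically, $|B_0|\le(a-1)(b-2)$. Hence $N-1\le(a-2)(b-1)+(a-1)(b-2)=2ab-3a-3b+4$, which is the upper bound. This is the argument of Theorem~\ref{ssc_cmon_ord_th} applied to both colours. It also shows that the natural decomposition is $N=(a-2)(b-1)+(a-1)(b-2)+2$, not $2(a-2)(b-2)+(a-1)+(b-1)$.

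\textbf{Lower bound.} No colouring is actually specified: the cross-block colours are left to be ``tuned'', and the case analysis is only asserted. The vertex count is also off by one. Your spine has $(a-2)+(b-2)=a+b-4$ vertices, not $a+b-3$. So two blocks plus spine give $2(a-2)(b-2)+a+b-4=N-2$ vertices. Even a perfect choice of cross colours would only yield $R_\mathrm{ord}(C_a^\mathrm{mon},C_b^\mathrm{mon})\ge N-1$. For $a=b=3$ your scheme has four vertices, whereas the required witness is the five-vertex pentagon colouring.

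You need an explicit colouring on $N-1=(a-1)(b-1)+(a-2)(b-2)$ vertices, with a genuine verification. A useful sanity check comes from rotation invariance. Monotone cycles are invariant under rotating the vertex order, so the upper-bound argument applies at every vertex. Hence in any extremal colouring each vertex has exactly $(a-2)(b-1)$ red and $(a-1)(b-2)$ blue neighbours.
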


Another recent set of results concerning the computation of small ordered Ramsey numbers involves alternating paths. We define the \emph{alternating path} graph of order $n \in \mathbb{N}$, denoted by $P_n^\mathrm{alt}$, as the path graph of order $n$ with the underlying path $(0, n - 1, 1, n - 2, 2, n - 3, \ldots, \lfloor \frac{n}{2} \rfloor)$; see Figure \ref{alt_fig}. Balko et al.\ \cite{BalCiKralKyn2020} initially determined $R_\mathrm{ord}(P_n^\mathrm{alt}, P_n^\mathrm{alt})$ for $n \le 8$, and obtained lower bounds for $n \in \{ 9, 10, \ldots, 13 \}$. Afterwards, Poljak \cite{Poljak2023} determined $R_\mathrm{ord}(P_n^\mathrm{alt}, P_n^\mathrm{alt})$ for $n \in \{ 9, 10 \}$ and improved the lower bound for $n = 13$. Kucheriya, Lo, Petr, Sgueglia and Yan \cite{KuLoPeSguYan2026} later found a general upper bound on $R_\mathrm{ord}(P_n^\mathrm{alt}, P_n^\mathrm{alt})$ that led to the exact computation of these ordered Ramsey numbers for $n \in \{ 11, 12, 13 \}$. All of these results can be succinctly summarized in Table \ref{alt_tab}.

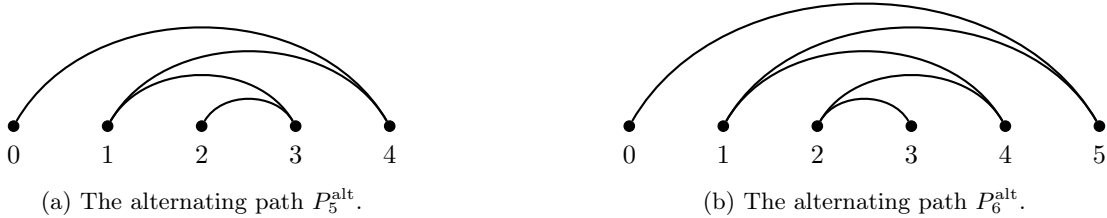
\begin{figure}[t]
\centering
\subcaptionbox{The alternating path $P_5^\mathrm{alt}$.}[0.47\textwidth]
{
    \centering
    \begin{tikzpicture}
        \tikzstyle{vertex}=[draw,circle,font=\scriptsize,minimum size=4pt,inner sep=1pt,fill=black]
        \tikzstyle{edge}=[draw,thick]

        \foreach \i in {0,1,2,3,4} {
            \node[vertex] (v\i) at ({1.25*\i}, 0) {};
            \node[below=4pt] at (v\i) {$\i$};
        }

        \path[edge, bend left=60] (v0) to (v4);
        \path[edge, bend right=60] (v4) to (v1);
        \path[edge, bend left=60] (v1) to (v3);
        \path[edge, bend right=60] (v3) to (v2);
    \end{tikzpicture}
}
\qquad
\subcaptionbox{The alternating path $P_6^\mathrm{alt}$.}[0.47\textwidth]
{
    \centering
    \begin{tikzpicture}
        \tikzstyle{vertex}=[draw,circle,font=\scriptsize,minimum size=4pt,inner sep=1pt,fill=black]
        \tikzstyle{edge}=[draw,thick]

        \foreach \i in {0,1,2,3,4,5} {
            \node[vertex] (v\i) at ({1.25*\i}, 0) {};
            \node[below=4pt] at (v\i) {$\i$};
        }

        \path[edge, bend left=60] (v0) to (v5);
        \path[edge, bend right=60] (v5) to (v1);
        \path[edge, bend left=60] (v1) to (v4);
        \path[edge, bend right=60] (v4) to (v2);
        \path[edge, bend left=60] (v2) to (v3);
    \end{tikzpicture}
}
\caption{The alternating path graphs of orders five and six.}
\label{alt_fig}
\end{figure}

\begin{table}
\centering
\renewcommand{\arraystretch}{1.2}
\begin{tabular}{|c||rrrrrrrrrrrrr|}
\hline 
$n$ & $1$ & $2$ & $3$ & $4$ & $5$ & $6$ & $7$ & $8$ & $9$ & $10$ & $11$ & $12$ & $13$\\
\hline 
\hline 
$R_\mathrm{ord}(P_n^\mathrm{alt}, P_n^\mathrm{alt})$ & $1$ & $2$ & $4$ & $7$ & $9$ & $12$ & $15$ & $17$ & $20$ & $23$ & $25$ & $28$ & $31$\\
\hline
\end{tabular}
\caption{The ordered Ramsey numbers $R_\mathrm{ord}(P_n^\mathrm{alt}, P_n^\mathrm{alt})$ for $n \le 13$. Sources:\ \cite{BalCiKralKyn2020, KuLoPeSguYan2026, Poljak2023}.}
\label{alt_tab}
\end{table}

Here, our goal is to systematically determine new small two-color ordered Ramsey numbers involving specific classes of graphs, such as monotone paths, monotone cycles and alternating paths, as well as other classes of graphs appearing in literature, such as stars, complete graphs and nested matchings. Besides diagonal numbers, we are also interested in computing off-diagonal numbers corresponding to either two different graphs of the same class, or two graphs from different classes. Inspired by the work of Poljak \cite{Poljak2020}, we apply the Kissat SAT solver \cite{Kissat, KissatRepo} to compute all the desired numbers. Our contribution extends the known results on the computation of ordered Ramsey numbers of graphs with a specific structure, and complements the existing results concerning the computation of ordered Ramsey numbers of graphs up to a fixed order \cite{BroLiMiPu2025, OveAlmCoLa2024}.

Apart from studying ordered Ramsey numbers, we introduce the cyclic Ramsey numbers, which arise naturally when replacing the total ordering of vertices with a cyclic ordering. Using Kissat, we compute cyclic Ramsey numbers alongside the ordered ones for the previously considered classes of graphs.

For any graphs $H_1, H_2, \ldots, H_k$ with $k \in \mathbb{N}$, we define the \emph{cyclic Ramsey number} $R_\mathrm{cyc}(H_1, H_2, \ldots, H_k)$ as the smallest $n \in \mathbb{N}$ for which every $k$-edge-coloring of $K_n$ contains $H_j$ as a monochromatic subgraph of color $j$ for some $j \in \{1, 2, \ldots, k \}$, with the vertices appearing in the same cyclic order as in $H_j$. More precisely, the embedding $\varphi \colon V(H_j) \to V(K_n)$ is additionally required to be increasing up to a cyclic permutation, i.e., there must exist $t \in V(H_j)$ such that $(\varphi(t), \varphi(t + 1), \ldots, \varphi(|H_j| - 1), \varphi(0), \varphi(1), \ldots, \varphi(t - 1))$ is an increasing sequence.

In the same way that ordered Ramsey numbers correspond to vertex sets with a total ordering, cyclic Ramsey numbers correspond to vertex sets with a cyclic ordering. For this reason, it is natural to consider this relaxation of the ordered Ramsey numbers. We observe that Ramsey's theorem implies that the cyclic Ramsey numbers are well-defined for any $k \in \mathbb{N}$ and any choice of graphs $H_1, H_2, \ldots, H_k$. The next result immediately follows.

\begin{proposition}\label{cyc_basic_prop}
For any graphs $H_1, H_2, \ldots, H_k$ with $k \in \mathbb{N}$, we have
\[
    R(H_1, H_2, \ldots, H_k) \le R_\mathrm{cyc}(H_1, H_2, \ldots, H_k) \le R_\mathrm{ord}(H_1, H_2, \ldots, H_k) \le R(K_{|H_1|}, K_{|H_2|}, \ldots, K_{|H_k|}) .
\]
In particular, $R_\mathrm{ord}(K_{n_1}, K_{n_2}, \ldots, K_{n_k}) = R_\mathrm{cyc}(K_{n_1}, K_{n_2}, \ldots, K_{n_k}) = R(K_{n_1}, K_{n_2}, \ldots, K_{n_k})$ for any $k \in \mathbb{N}$ and $n_1, n_2, \ldots, n_k \in \mathbb{N}$.
\end{proposition}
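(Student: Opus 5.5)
The plan is to prove the chain one inequality at a time. Each step compares the classes of admissible embeddings: if one class contains another, then a coloring that avoids the larger class also avoids the smaller one. Throughout, fix $n \in \mathbb{N}$ and a $k$-edge-coloring $c$ of $K_n$. Say $c$ is \emph{good} for a given notion if, for some $j$, there is an embedding $\varphi \colon V(H_j) \to V(K_n)$ of that notion whose image is monochromatic in color $j$. The relevant notions are: arbitrary injective embeddings (standard), increasing up to cyclic rotation (cyclic), and increasing (ordered).

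For the middle inequality $R_\mathrm{cyc} \le R_\mathrm{ord}$, I note that every increasing $\varphi$ satisfies the cyclic condition with $t = 0$. So if every coloring of $K_n$ contains an ordered monochromatic copy, it contains a cyclic one, and minimality gives the inequality. The first inequality $R \le R_\mathrm{cyc}$ is the same argument, since a cyclically increasing embedding is in particular an injective homomorphism of $H_j$ into $K_n$, that is, an ordinary embedding. There is one point I will check carefully. The Ramsey property must be monotone in $n$, so that the smallest valid $n$ for the stronger property is at least the smallest for the weaker property. This holds because restricting a coloring of $K_{n+1}$ to the first $n$ vertices preserves avoidance in all three senses. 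For the ordered and cyclic notions this uses that restricting to the initial segment $\{0,\dots,n-1\}$ keeps both the total and the cyclic order.

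For the last inequality, set $N = R(K_{|H_1|}, \ldots, K_{|H_k|})$, which is finite by Ramsey's theorem. Any coloring of $K_N$ then has, for some $j$, a set $S$ of $|H_j|$ vertices spanning a clique in color $j$. Let $\varphi$ map $0,1,\ldots,|H_j|-1$ to the elements of $S$ in increasing order. This is an increasing embedding, and every edge of $H_j$ goes to an edge inside $S$, which has color $j$. Hence $R_\mathrm{ord} \le N$, and by the previous paragraph the cyclic numbers are finite as well.

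For the ``in particular'' claim, I take $H_j = K_{n_j}$. The two ends of the chain are then the same number $R(K_{n_1}, \ldots, K_{n_k})$, so all four quantities coincide. I do not expect a real obstacle here. The only care needed is the monotonicity remark above, which makes the definitions behave as thresholds.
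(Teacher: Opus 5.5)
Your proof is correct and is the argument the paper has in mind when it says the result ``immediately follows''. The classes of admissible embeddings are nested (increasing $\subseteq$ cyclically increasing $\subseteq$ injective), which gives the first two inequalities, and listing a monochromatic clique in increasing order gives the last. One small point: the monotonicity check is not needed. With $n = R_\mathrm{ord}$, every coloring of $K_n$ contains an ordered copy and hence a cyclic one, so minimality of $R_\mathrm{cyc}$ already gives $R_\mathrm{cyc} \le R_\mathrm{ord}$.
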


Finally, we demonstrate how reinforcement learning (RL) can be applied through the recently developed Reinforcement Learning for Graph Theory (RLGT) framework \cite{RLGT, RLGTRepo} to obtain lower bounds on ordered and cyclic Ramsey numbers. Although the SAT solver based approach outperforms RL, the partial success of the RLGT framework indicates that RL could be a useful asset in tackling similar combinatorial problems in the future.

In Section \ref{sc_prel}, we define all the remaining classes of graphs whose ordered and cyclic Ramsey numbers we study. In addition, we give a few initial observations on the cyclic Ramsey numbers. Afterwards, in Section~\ref{sc_sat}, we describe how the ordered and cyclic Ramsey number problems can be reduced to Boolean satisfiability (SAT) problems and how the Kissat SAT solver was configured to obtain the desired results. The computational results are then presented in Section \ref{sc_results}, alongside the theorems and conjectures derived from these results. These include the determination of all ordered or cyclic Ramsey numbers for several pairs of classes of graphs, as well as some bounds on the ordered and cyclic Ramsey numbers where one argument is a connected graph, while the other is a monotone path or a monotone cycle. Subsequently, Section~\ref{sc_rl} discusses how RL can be applied to obtain lower bounds on ordered and cyclic Ramsey numbers and how the RLGT framework was configured to achieve this goal. Finally, Section \ref{sc_conclusion} ends the paper with a brief conclusion and discusses possible directions for future research. In particular, we introduce the permutational Ramsey numbers to show how the different Ramsey-type formulations involving standard, ordered and cyclic Ramsey numbers can be unified within a group-theoretic framework. The source code used to obtain the computational results can be found in \cite{GitHub}, together with all the constructed graphs that yield the desired lower bounds.

\section{Preliminaries}\label{sc_prel}

In the present section, we first define a few additional classes of graphs that we will be interested in when computing ordered and cyclic Ramsey numbers. To begin, we consider a \emph{reverse alternating path} graph of order $n \in \mathbb{N}$, denoted by $P_n^\mathrm{ralt}$, to be the path graph of order $n$ with the underlying path $(n - 1, 0, n - 2, 1, n - 3, \linebreak 2, \ldots, \lceil \frac{n}{2} \rceil - 1)$; see Figure \ref{ralt_fig}. It is natural to consider these graphs since they arise by reflecting the alternating paths, as noted by Kucheriya et al.\ \cite{KuLoPeSguYan2026}. Although $R_\mathrm{ord}(P_a^\mathrm{ralt}, P_b^\mathrm{ralt}) = R_\mathrm{ord}(P_a^\mathrm{alt}, P_b^\mathrm{alt})$ and $R_\mathrm{cyc}(P_a^\mathrm{ralt}, P_b^\mathrm{ralt}) = R_\mathrm{cyc}(P_a^\mathrm{alt}, P_b^\mathrm{alt})$ obviously hold for any $a, b \in \mathbb{N}$, it does make sense to investigate reverse alternating paths when combining them with graphs from other classes. In particular, we will be interested in computing the off-diagonal numbers $R_\mathrm{ord}(P_a^\mathrm{alt}, P_b^\mathrm{ralt})$ and $R_\mathrm{cyc}(P_a^\mathrm{alt}, P_b^\mathrm{ralt})$, which could behave differently from those involving only alternating paths.

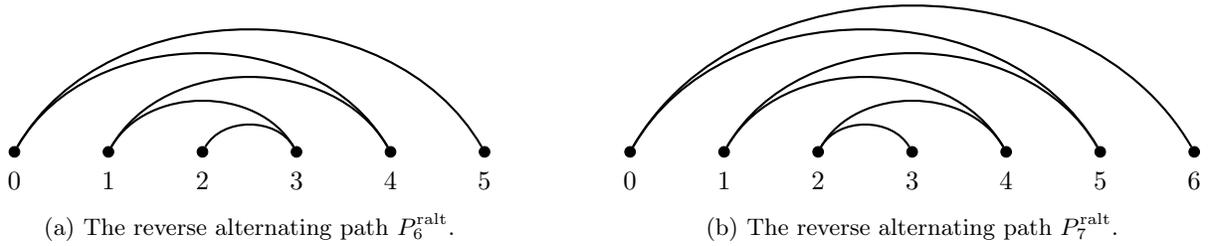
\begin{figure}[t]
\centering
\subcaptionbox{The reverse alternating path $P_6^\mathrm{ralt}$.}[0.44\textwidth]
{
    \centering
    \begin{tikzpicture}
        \tikzstyle{vertex}=[draw,circle,font=\scriptsize,minimum size=4pt,inner sep=1pt,fill=black]
        \tikzstyle{edge}=[draw,thick]

        \foreach \i in {0,1,2,3,4,5} {
            \node[vertex] (v\i) at ({1.25*\i}, 0) {};
            \node[below=4pt] at (v\i) {$\i$};
        }

        \path[edge, bend right=60] (v5) to (v0);
        \path[edge, bend left=60] (v0) to (v4);
        \path[edge, bend right=60] (v4) to (v1);
        \path[edge, bend left=60] (v1) to (v3);
        \path[edge, bend right=60] (v3) to (v2);
    \end{tikzpicture}
}
\qquad
\subcaptionbox{The reverse alternating path $P_7^\mathrm{ralt}$.}[0.50\textwidth]
{
    \centering
    \begin{tikzpicture}
        \tikzstyle{vertex}=[draw,circle,font=\scriptsize,minimum size=4pt,inner sep=1pt,fill=black]
        \tikzstyle{edge}=[draw,thick]

        \foreach \i in {0,1,2,3,4,5,6} {
            \node[vertex] (v\i) at ({1.25*\i}, 0) {};
            \node[below=4pt] at (v\i) {$\i$};
        }

        \path[edge, bend right=60] (v6) to (v0);
        \path[edge, bend left=60] (v0) to (v5);
        \path[edge, bend right=60] (v5) to (v1);
        \path[edge, bend left=60] (v1) to (v4);
        \path[edge, bend right=60] (v4) to (v2);
        \path[edge, bend left=60] (v2) to (v3);
    \end{tikzpicture}
}
\caption{The reverse alternating path graphs of orders six and seven.}
\label{ralt_fig}
\end{figure}

We define the \emph{start-central star} graph of order $n \in \mathbb{N}$, denoted by $S_n^\mathrm{sc}$, as the star graph of order $n$ in which vertex $0$ is adjacent to all the other vertices. As noted by Balko et al.\ \cite{BalCiKralKyn2020}, it is straightforward to compute the ordered Ramsey numbers where all the arguments are start-central stars.

\begin{proposition}[\hspace{1sp}{\cite[Observation~12]{BalCiKralKyn2020}}]\label{prel_star_2}
    For any $k \in \mathbb{N}$ and $n_1, n_2, \ldots, n_k \ge 2$, we have
    \[
        R_\mathrm{ord}(S_{n_1}^\mathrm{sc}, S_{n_2}^\mathrm{sc}, \ldots, S_{n_k}^\mathrm{sc}) = 2 + \sum_{j = 1}^k (n_j - 2).
    \]
\end{proposition}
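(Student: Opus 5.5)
The plan is to prove the upper bound by pigeonhole and the lower bound by an explicit coloring. Set $N = 2 + \sum_{j=1}^k (n_j - 2)$.

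\textbf{Upper bound.} Take any $k$-edge-coloring of $K_N$ and look only at vertex $0$. It has $N - 1 = 1 + \sum_{j=1}^k (n_j - 2)$ neighbours $1, 2, \ldots, N-1$, all larger than $0$. By pigeonhole, some color $j$ appears on at least $n_j - 1$ of the edges $\{0, v\}$. Suppose $v_1 < v_2 < \cdots < v_{n_j - 1}$ are such neighbours joined to $0$ in color $j$. Then the map $0 \mapsto 0$, $i \mapsto v_i$ is increasing, and it embeds $S_{n_j}^\mathrm{sc}$ monochromatically in color $j$. Note that in $S_{n_j}^\mathrm{sc}$ the leaves $1, \ldots, n_j-1$ are pairwise non-adjacent, so their order among themselves imposes no constraint beyond monotonicity, which we have.

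\textbf{Lower bound.} We need a $k$-coloring of $K_{N-1}$, on vertices $0, \ldots, N-2$, with no ordered copy of $S_{n_j}^\mathrm{sc}$ in color $j$ for any $j$. An ordered copy in color $j$ needs a center $u$ with at least $n_j - 1$ color-$j$ edges to vertices larger than $u$. So it suffices to color each edge $\{u, w\}$ with $u < w$ so that every vertex $u$ has at most $n_j - 2$ forward edges of color $j$.

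The natural choice is to let the color of $\{u, w\}$ depend only on the gap $w - u$. Partition $\{1, 2, \ldots, N-2\}$ into consecutive blocks $I_1, \ldots, I_k$ with $|I_j| = n_j - 2$; this works since $\sum_j (n_j - 2) = N-2$, and blocks may be empty when $n_j = 2$. Then give edge $\{u, w\}$ color $j$ whenever $w - u \in I_j$. For a fixed vertex $u$, its forward edges have pairwise distinct gaps, so at most $|I_j| = n_j - 2$ of them receive color $j$, which is what we need. Alternatively, one could assign colors along each row $u$ greedily, giving the first $n_1 - 2$ forward edges color $1$, and so on, since each edge is "forward" for exactly one endpoint.

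\textbf{Obstacle.} The only delicate point is the lower-bound construction. One must make sure each edge is counted only from its smaller endpoint, since only that endpoint can serve as the center of a start-central star using it. Once that is observed, the gap-based coloring disposes of it. The degenerate case $n_j = 2$, where $S_2^\mathrm{sc} = K_2$, fits the same scheme: color $j$ is simply never used.
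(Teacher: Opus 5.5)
Your proof is correct. The pigeonhole count over the $N-1 = 1+\sum_j (n_j-2)$ forward edges at vertex $0$ gives the upper bound. The gap-based coloring of $K_{N-1}$, in which every vertex has at most $n_j-2$ forward edges of color $j$, rules out every ordered copy of $S_{n_j}^\mathrm{sc}$ in color $j$, including when $n_j = 2$.

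The paper gives no proof of its own here; it defers to Observation~12 of Balko et al.\ and calls it straightforward. Your argument supplies exactly that routine verification.
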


\noindent
Although the ordered Ramsey numbers of stars with different centers have been studied \cite{ChoPo2002}, in this work we restrict our attention to start-central stars. 

We observe that any two stars of the same order are equivalent for the cyclic Ramsey number problem, regardless of their centers. For this reason, when computing cyclic Ramsey numbers, we can just write $S_n$, which denotes any star of order $n$, instead of $S_n^\mathrm{sc}$. The next result directly follows.

\begin{proposition}\label{prel_star}
    For any $k \in \mathbb{N}$ and $n_1, n_2, \ldots, n_k \in \mathbb{N}$, we have
    \[
        R_\mathrm{cyc}(S_{n_1}, S_{n_2}, \ldots, S_{n_k}) = R(S_{n_1}, S_{n_2}, \ldots, S_{n_k}) .
    \]
\end{proposition}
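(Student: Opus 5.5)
The inequality $R(S_{n_1}, \ldots, S_{n_k}) \le R_\mathrm{cyc}(S_{n_1}, \ldots, S_{n_k})$ is already part of Proposition~\ref{cyc_basic_prop}. So the plan is to prove the reverse inequality by showing something stronger: every copy of a star in $K_n$ is automatically a cyclically ordered copy of $S_{n_j}$, whatever the center of $S_{n_j}$ is.

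Fix $j$, a star $S_{n_j}$ with center $c \in \{0, \ldots, n_j - 1\}$, and an unordered monochromatic copy of a star of order $n_j$ in color $j$. Let this copy have center $u$ and leaves $w_1, \ldots, w_{n_j - 1}$ in $K_n$. We want a map $\varphi \colon V(S_{n_j}) \to V(K_n)$, increasing up to a cyclic permutation, with $\varphi(c) = u$. Then it automatically sends all other vertices to the leaves, so it is an embedding of $S_{n_j}$ into the color-$j$ edges.

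To build $\varphi$, list the $n_j$ vertices $\{u, w_1, \ldots, w_{n_j-1}\}$ in increasing order as $x_0 < x_1 < \cdots < x_{n_j-1}$, and let $u = x_p$. Define $\varphi(v) = x_{(v - c + p) \bmod n_j}$. This map is a rotation composed with the order-preserving bijection, so it is increasing up to a cyclic permutation. The starting index is $t = (c - p) \bmod n_j$, since $\varphi(t) = x_0$. It also satisfies $\varphi(c) = x_p = u$, and every edge $\{c, v\}$ of $S_{n_j}$ maps to the edge $\{u, \varphi(v)\}$, which has color $j$. The same argument applies verbatim for $n_j = 1$ and $n_j = 2$, where the star has no edges or a single edge.

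Hence any $k$-edge-coloring of $K_n$ that contains some $S_{n_j}$ in color $j$ also contains it in the cyclic sense. Taking $n = R(S_{n_1}, \ldots, S_{n_k})$ therefore gives $R_\mathrm{cyc} \le R$, which combined with the first inequality yields equality. There is no real obstacle; the only care needed is choosing the index shift so that the center lands on $u$. This also justifies the earlier remark that stars with different centers are equivalent for the cyclic problem.
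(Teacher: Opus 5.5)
Your proof is correct and takes essentially the same approach as the paper. The paper only observes that stars of the same order are equivalent for the cyclic problem regardless of their centers, so any monochromatic star read cyclically starting from its center is a cyclic copy. It then combines this with Proposition~\ref{cyc_basic_prop}; your explicit shift $\varphi(v) = x_{(v - c + p) \bmod n_j}$ writes that observation out in full, including the trivial cases $n_j \le 2$.
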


\noindent
Finally, for any even $n \ge 2$, let $M_n^\mathrm{nest}$ denote the \emph{nested matching} graph of order $n$, i.e., the $1$-regular graph of order $n$ in which each vertex $v$ is adjacent only to $n - 1 - v$; see Figure \ref{nm_fig}.

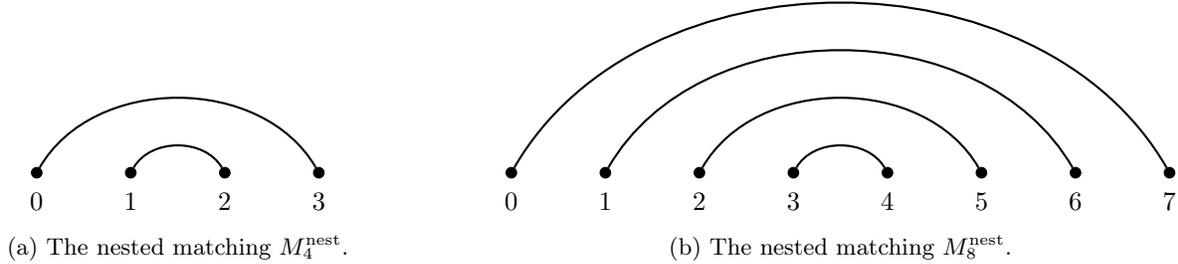
\begin{figure}[t]
\centering
\subcaptionbox{The nested matching $M_4^\mathrm{nest}$.}[0.32\textwidth]
{
    \centering
    \begin{tikzpicture}
        \tikzstyle{vertex}=[draw,circle,font=\scriptsize,minimum size=4pt,inner sep=1pt,fill=black]
        \tikzstyle{edge}=[draw,thick]

        \foreach \i in {0,1,2,3} {
            \node[vertex] (v\i) at ({1.25*\i}, 0) {};
            \node[below=4pt] at (v\i) {$\i$};
        }

        \path[edge, bend left=60] (v0) to (v3);
        \path[edge, bend left=60] (v1) to (v2);
    \end{tikzpicture}
}
\qquad
\subcaptionbox{The nested matching $M_8^\mathrm{nest}$.}[0.62\textwidth]
{
    \centering
    \begin{tikzpicture}
        \tikzstyle{vertex}=[draw,circle,font=\scriptsize,minimum size=4pt,inner sep=1pt,fill=black]
        \tikzstyle{edge}=[draw,thick]

        \foreach \i in {0,1,2,3,4,5,6,7} {
            \node[vertex] (v\i) at ({1.25*\i}, 0) {};
            \node[below=4pt] at (v\i) {$\i$};
        }

        \path[edge, bend left=60] (v0) to (v7);
        \path[edge, bend left=60] (v1) to (v6);
        \path[edge, bend left=60] (v2) to (v5);
        \path[edge, bend left=60] (v3) to (v4);
    \end{tikzpicture}
}
\caption{The nested matching graphs of orders four and eight.}
\label{nm_fig}
\end{figure}

It is easy to see that a cyclic Ramsey number is invariant under rotation of the vertices of any of its argument graphs. This yields the following result.

\begin{proposition}\label{cyc_rot_inv_prop}
    For some $k \in \mathbb{N}$, let $H_1, H_2, \ldots, H_k$ and $H_1', H_2', \ldots, H_k'$ be graphs such that $H_j' \cong_\rho H_j$ for each $j \in \{ 1, 2, \ldots, k \}$. Then
    \[
        R_\mathrm{cyc}(H_1', H_2', \ldots, H_k') = R_\mathrm{cyc}(H_1, H_2, \ldots, H_k) .
    \]
\end{proposition}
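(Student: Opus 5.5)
The plan is to show that, for each $j$, a monochromatic copy of $H_j$ in cyclic order exists in a given coloring if and only if a monochromatic copy of $H_j'$ in cyclic order exists in that coloring. Then the sets of $n$ for which every $k$-edge-coloring of $K_n$ contains such a copy for some $j$ coincide for the two tuples, so their minima agree. By symmetry of $\cong_\rho$ (the inverse of $v \mapsto (v+s) \bmod m$ is $v \mapsto (v-s) \bmod m$), it suffices to prove one direction. Note also that $|H_j'| = |H_j| =: m$, because a rotational isomorphism is a bijection.

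The key step is composing a cyclically increasing embedding with a rotation. Fix $j$, and let $\psi(v) = (v+s) \bmod m$ be an isomorphism from $H_j'$ to $H_j$; such a map exists because we can invert the given one. Suppose $\varphi \colon V(H_j) \to V(K_n)$ embeds $H_j$ in color $j$ and is increasing up to a cyclic permutation, witnessed by some $t$. Set $\varphi' = \varphi \circ \psi$. Then $\varphi'$ is injective, and it maps every edge of $H_j'$ onto an edge of color $j$, since $\psi$ maps edges of $H_j'$ to edges of $H_j$.

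It remains to check the cyclic monotonicity of $\varphi'$, which is the only step with any content. Take $t' = (t - s) \bmod m$. The sequence $(\varphi'(t'), \varphi'(t'+1), \ldots, \varphi'(m-1), \varphi'(0), \ldots, \varphi'(t'-1))$, with indices read mod $m$, equals $(\varphi(t), \varphi(t+1), \ldots, \varphi(m-1), \varphi(0), \ldots, \varphi(t-1))$. This holds because $\psi$ sends the cyclic sequence starting at $t'$ term by term to the cyclic sequence starting at $t$. That sequence is increasing by assumption, so $\varphi'$ witnesses a cyclically ordered monochromatic copy of $H_j'$.

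The only point requiring care is this index bookkeeping mod $m$, and I expect no real obstacle there. Applying the argument for each $j$ in both directions gives the equality of the cyclic Ramsey numbers.
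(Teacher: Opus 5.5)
Your proof is correct and follows the paper's approach. The paper gives no formal proof; it only remarks that a cyclic Ramsey number is evidently invariant under rotating the vertices of any argument graph. Your composition $\varphi' = \varphi \circ \psi$, with the shifted witness $t' = (t - s) \bmod m$, supplies exactly the routine verification the paper omits.
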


\noindent
As an immediate corollary of Propositions \ref{cyc_basic_prop} and \ref{cyc_rot_inv_prop}, we obtain the next result.

\begin{corollary}
    For graphs $H_1, H_2, \ldots, H_k$ with $k \in \mathbb{N}$, let $\mathcal{H}'$ be the set of all the $k$-tuples $(H_1', H_2', \ldots, H_k')$ such that $H_j' \cong_\rho H_j$ for each $j \in \{ 1, 2, \ldots, k \}$. Then
    \begin{equation}\label{rot_ineq}
        R_\mathrm{cyc}(H_1, H_2, \ldots, H_k) \le \min_{(H_1', H_2', \ldots, H_k') \in \mathcal{H}'} R_\mathrm{ord}(H_1', H_2', \ldots, H_k') .
    \end{equation}
\end{corollary}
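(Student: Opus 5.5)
The plan is to fix an arbitrary $k$-tuple $(H_1', H_2', \ldots, H_k') \in \mathcal{H}'$, show that $R_\mathrm{cyc}(H_1, H_2, \ldots, H_k) \le R_\mathrm{ord}(H_1', H_2', \ldots, H_k')$, and then take the minimum over $\mathcal{H}'$. This minimum exists: $\mathcal{H}'$ is finite, since each $H_j$ has only $|H_j|$ rotations, and it is nonempty, since $H_j \cong_\rho H_j$ via the shift $s = 0$.

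For the fixed tuple, the argument is a two-step chain. Each $H_j' \cong_\rho H_j$, so Proposition \ref{cyc_rot_inv_prop} gives
\[
R_\mathrm{cyc}(H_1, H_2, \ldots, H_k) = R_\mathrm{cyc}(H_1', H_2', \ldots, H_k') .
\]
Proposition \ref{cyc_basic_prop}, applied to the graphs $H_1', H_2', \ldots, H_k'$, gives
\[
R_\mathrm{cyc}(H_1', H_2', \ldots, H_k') \le R_\mathrm{ord}(H_1', H_2', \ldots, H_k') .
\]
Chaining the two yields the inequality for this tuple. Since the tuple was arbitrary, the left-hand side is at most every term in the minimum, and hence at most the minimum itself, which is \eqref{rot_ineq}.

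No step is a genuine obstacle, because both ingredients are earlier propositions. The only point worth verifying is that they are applied to the right graphs. Proposition \ref{cyc_basic_prop} must be used for the rotated tuple $(H_1', \ldots, H_k')$, not the original one. The rotation invariance must be applied coordinatewise, with independent shifts $s_j$ for different $j$, which is exactly what Proposition \ref{cyc_rot_inv_prop} allows.
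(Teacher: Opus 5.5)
Your proof is correct and matches the paper, which presents this corollary as an immediate consequence of rotation invariance (Proposition~\ref{cyc_rot_inv_prop}) combined with $R_\mathrm{cyc} \le R_\mathrm{ord}$ (Proposition~\ref{cyc_basic_prop}), applied to each rotated tuple before taking the minimum. Your check that $\mathcal{H}'$ is finite and nonempty, so the minimum exists, is a small but welcome addition the paper leaves implicit.
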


\noindent
We conclude the section with an example showing that the gap between the left-hand and right-hand side of~\eqref{rot_ineq} can be wide.

\begin{example}
    Let $n \ge 3$ and for each $j \in \{ 0, 1, 2, \ldots, n - 1 \}$, let $P^\mathrm{qmon}_{n, j}$ be the quasi-monotone path that arises from $C_n^\mathrm{mon}$ by deleting the edge $\{ j, (j + 1) \bmod n \}$. Observe that these graphs form a single equivalence class under vertex rotation, and that $P^\mathrm{qmon}_{n, n - 1} = P^\mathrm{mon}_n$. By using Kissat as described in Section \ref{sc_sat}, we can compute $R_\mathrm{ord}(P^\mathrm{qmon}_{5, a}, P^\mathrm{qmon}_{5, b})$ for each $a, b \in \{ 0, 1, 2, 3, 4 \}$; see Table \ref{pqmon_pqmon_tab}.

    Therefore, if we let $k = 2$ and $H_1 = H_2 = P^\mathrm{mon}_5$, then the right-hand side of \eqref{rot_ineq} equals $\min \{ 16, 17 \} = 16$. On the other hand, a quick computation via Kissat also gives $R_\mathrm{cyc}(P^\mathrm{mon}_5, P^\mathrm{mon}_5) = 13$; see Table \ref{pmon_pmon_cyc_tab}. This demonstrates that the left-hand side of \eqref{rot_ineq} can be notably lower than the right-hand side. If we additionally take into consideration the standard Ramsey numbers, a well-known result by Gerencsér and Gyárfás \cite{GeGya1967} gives $R(P_5, P_5) = 6$. This shows that the cyclic Ramsey number problem does not simply come down to computing ordered Ramsey numbers, but instead relies on a structural pattern entirely different from both the standard and the ordered Ramsey number problems. \hfill$\Diamond$
\end{example}
 
\begin{table}[H]
\centering
\footnotesize
\begin{tabular}{|c||rrrrr|}
\hline
\backslashbox{$a$}{$b$} & $0$ & $1$ & $2$ & $3$ & $4$\\
\hline
\hline
$0$ & $16$ & $16$ & $16$ & $16$ & $17$\\
$1$ & & $17$ & $17$ & $16$ & $17$\\
$2$ & & & $17$ & $16$ & $17$\\
$3$ & & & & $16$ & $17$\\
$4$ & & & & & $17$\\
\hline
\end{tabular}
\caption{The ordered Ramsey numbers $R_\mathrm{ord}(P^\mathrm{qmon}_{5, a}, P^\mathrm{qmon}_{5, b})$ for any $0 \le a \le b \le 4$.}
\label{pqmon_pqmon_tab}
\end{table}

\section{Reduction to SAT}\label{sc_sat}

Let $H_1$ and $H_2$ be two given graphs and let $n \in \mathbb{N}$. The problem of finding a $2$-edge-coloring of $K_n$ that does not contain $H_j$ as a monochromatic subgraph of color $j$ for any $j \in \{ 1, 2 \}$ can naturally be modeled as a SAT problem. This approach is especially convenient when the embedding $\varphi \colon V(H_j) \to V(K_n)$ is additionally required to be increasing, i.e., in the context of ordered Ramsey number problems, as explained by Poljak \cite{Poljak2020}. Here, we demonstrate how the same idea can be applied to cyclic Ramsey number problems. For completeness, we also include Poljak's original approach.

Consider a $2$-edge-coloring $G$ of a complete graph $K_n$, and for each $u, v \in V(K_n)$ with $u < v$, let $x_{u, v}$ be a Boolean variable that indicates whether the edge $\{ u, v \}$ is colored with color $2$. Also, for convenience, let $x_{v, u} \coloneqq x_{u, v}$. Then, the nonexistence of $H_1$ as a monochromatic subgraph of $G$ of color $1$, with the vertices appearing in the same order as in $H_1$, is equivalent to
\begin{equation}\label{cl1}
    \bigwedge_{(w_0, w_1, w_2, \ldots, w_{|H_1| - 1}) \in \mathcal{O}_1} \bigvee_{uv \in E(H_1)} x_{w_u, w_v} ,
\end{equation}
where $\mathcal{O}_1$ comprises all the increasing $|H_1|$-tuples of elements in $V(K_n)$. Similarly, the nonexistence of $H_2$ as a monochromatic subgraph of $G$ of color $2$, with the vertices appearing in the same order as in $H_2$, is equivalent to
\begin{equation}\label{cl2}
    \bigwedge_{(w_0, w_1, w_2, \ldots, w_{|H_2| - 1}) \in \mathcal{O}_2} \bigvee_{uv \in E(H_2)} \neg x_{w_u, w_v} ,
\end{equation}
where $\mathcal{O}_2$ comprises all the increasing $|H_2|$-tuples of elements in $V(K_n)$. By combining \eqref{cl1} and \eqref{cl2}, we obtain a SAT problem with $\binom{n}{2}$ variables whose resolution yields a bound on $R_\mathrm{ord}(H_1, H_2)$. If the SAT problem has a solution, then $R_\mathrm{ord}(H_1, H_2) \ge n + 1$, and otherwise, we have $R_\mathrm{ord}(H_1, H_2) \le n$.

The above strategy can be adjusted to cover cyclic Ramsey number problems, so that the forbidden monochromatic subgraphs are those with the vertices appearing in the same cyclic order, rather than in the same order. In this case, the SAT clauses \eqref{cl1} and \eqref{cl2} should be replaced with
\[
    \bigwedge_{(w_0, w_1, w_2, \ldots, w_{|H_1| - 1}) \in \mathcal{C}_1} \bigvee_{uv \in E(H_1)} x_{w_u, w_v} \quad \mbox{and} \quad \bigwedge_{(w_0, w_1, w_2, \ldots, w_{|H_2| - 1}) \in \mathcal{C}_2} \bigvee_{uv \in E(H_2)} \neg x_{w_u, w_v} ,
\]
respectively, where $\mathcal{C}_1$ (resp.\ $\mathcal{C}_2$) comprises all the $|H_1|$-tuples (resp.\ $|H_2|$-tuples) of elements in $V(K_n)$ that are increasing up to a cyclic permutation. If the obtained SAT problem has a solution, then $R_\mathrm{cyc}(H_1, H_2) \ge n + 1$, and otherwise, $R_\mathrm{cyc}(H_1, H_2) \le n$. These SAT formulations can also be extended to more than two edge colors, but we focus on two-color Ramsey problems.

All the computed ordered and cyclic Ramsey numbers presented in Section \ref{sc_results} were obtained by solving the corresponding SAT problems. The variables were arranged in row-major order, i.e., as
\[
    x_{0, 1}, x_{0, 2}, \ldots, x_{0, n - 1}, x_{1, 2}, x_{1, 3}, \ldots, x_{1, n - 1}, x_{2, 3}, x_{2, 4}, \ldots, x_{2, n - 1}, \ldots, x_{n - 2, n - 1}.
\]
We used the Kissat SAT solver, primarily due to its strong performance in the main track of the SAT Competition 2024. The source code used for generating SAT instances and processing the results is available in \cite{GitHub}, together with all the computed data, organized as follows.
\begin{enumerate}[label=\textbf{(\arabic*)}]
    \item The \texttt{src} folder contains \texttt{C++} source code for generating SAT instances and an independent \texttt{Python} script for the same purpose. The folder also includes \texttt{bash} scripts for automatizing the process of ordered and cyclic Ramsey number computation, a \texttt{Python} script for parsing the Kissat output, and \texttt{C++} code for Ramsey score computation. The Ramsey scores can then optionally be used by a \texttt{Python} RL training script; see Section \ref{sc_rl}.
    \item The \texttt{kissat\_output} folder contains all output files produced by running Kissat on the generated SAT instances.
    \item The \texttt{parsed\_graphs} folder contains the graphs extracted from the Kissat output files using the \texttt{Python} parsing script. These graphs are stored in the \texttt{graph6} format \cite{McKayGraph6, McKayPip2014} and provide lower bounds for the computed ordered and cyclic Ramsey numbers.
    \item The \texttt{generated\_tables} folder contains automatically generated \LaTeX{} code used to produce the tables in Section \ref{sc_results}. This code was also generated using the \texttt{Python} parsing script.
\end{enumerate}
Further technical details on compiling and running Kissat, and on the structure and usage of the provided code, can be found in \cite{GitHub}.

\section{Results}\label{sc_results}

In this section, we provide the computed values for small two-color ordered and cyclic Ramsey numbers of various classes of graphs, alongside the theorems and conjectures derived from the computational results. We begin with the bounds on the ordered and cyclic Ramsey numbers where one argument is a connected graph, while the other is a monotone path or a monotone cycle. The first of these results generalizes Theorem \ref{pmon_k_th} and deals with the ordered Ramsey numbers of connected graphs versus monotone paths.

\begin{theorem}\label{pmon_con_th}
    Let $H$ be a connected graph of order $a \in \mathbb{N}$. Then for any $b \in \mathbb{N}$, we have $R_\mathrm{ord}(H, P_b^\mathrm{mon}) = 1 + (a - 1)(b - 1)$.
\end{theorem}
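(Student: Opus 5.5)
The upper bound follows directly from Theorem \ref{pmon_k_th}, and the lower bound comes from an explicit block coloring that uses only the connectivity of $H$.

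\emph{Upper bound.} Since $H$ is a graph of order $a$ on the vertex set $\{0, 1, \ldots, a - 1\}$, it is an ordered subgraph of $K_a$. Hence any coloring of $K_n$ containing $K_a$ in color $1$ (with vertices in increasing order) also contains $H$ in color $1$ with the same order. This gives $R_\mathrm{ord}(H, P_b^\mathrm{mon}) \le R_\mathrm{ord}(K_a, P_b^\mathrm{mon})$. By Theorem \ref{pmon_k_th} and the obvious symmetry of two-color ordered Ramsey numbers under swapping colors, $R_\mathrm{ord}(K_a, P_b^\mathrm{mon}) = 1 + (a - 1)(b - 1)$.

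\emph{Lower bound.} I will exhibit a coloring of $K_n$ with $n = (a - 1)(b - 1)$ avoiding both patterns. Split $\{0, \ldots, n - 1\}$ into $b - 1$ consecutive intervals (blocks) $B_1, \ldots, B_{b-1}$, each of size $a - 1$. Color every edge inside a block with color $1$, and every edge between different blocks with color $2$.

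\begin{itemize}
\item[(i)] There is no monochromatic $P_b^\mathrm{mon}$ of color $2$. Consecutive vertices $w_0 < w_1 < \cdots < w_{b-1}$ joined by color-$2$ edges must lie in different blocks. Since the vertices increase and blocks are intervals, each step moves to a strictly later block. This requires $b$ distinct blocks, but only $b - 1$ exist.
\item[(ii)] There is no copy of $H$ in color $1$. The color-$1$ graph is a disjoint union of cliques on $a - 1$ vertices. Because $H$ is connected, any color-$1$ copy of $H$ lies within a single component, which has fewer than $a$ vertices. This is impossible regardless of vertex order.
\end{itemize}

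For $a = 1$ or $b = 1$ the formula gives $1$. This holds trivially, since a one-vertex graph embeds in $K_1$; the construction above is then empty and not needed.

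The only real subtlety is the upper bound. It relies on Theorem \ref{pmon_k_th} with the colors in the stated order, and on monotonicity under taking ordered subgraphs; both are routine. The lower bound is where connectivity of $H$ is essential.
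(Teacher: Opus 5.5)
Your proof is correct and is essentially the paper's argument. The paper gets the upper bound from $H \subseteq K_a$ and Theorem~\ref{pmon_k_th}, and gets the lower bound from the same block coloring, written via $f(x) = \lfloor x/(a-1) \rfloor$ with color~$1$ exactly when $f(u) = f(v)$. You use the same two ingredients and additionally make explicit the color swap needed to match the argument order in Theorem~\ref{pmon_k_th}.
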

\begin{proof}
    The theorem trivially holds if $\min \{ a, b \} = 1$, so we assume that $a, b \ge 2$. Since $H$ is a subgraph of $K_a$, Theorem \ref{pmon_k_th} implies $R_\mathrm{ord}(H, P_b^\mathrm{mon}) \le 1 + (a - 1)(b - 1)$. Therefore, to complete the proof, it suffices to construct a $2$-edge-coloring of $K_n$ with $n = (a - 1)(b - 1)$ that contains neither $H$ as a monochromatic subgraph of color~$1$ nor $P_b^\mathrm{mon}$ as a monochromatic subgraph of color $2$, with the vertices required to appear in the same order.

    Let $f \colon \{ 0, 1, 2, \ldots, n - 1 \} \to \{ 0, 1, 2, \ldots, b - 2 \}$ be the function defined by $f(x) \coloneqq \lfloor \frac{x}{a - 1} \rfloor$. Now, let $G$ be the $2$-edge-coloring of $K_n$ such that the edge between any two distinct vertices $u, v \in \{ 0, 1, 2, \ldots, n - 1 \}$ is colored with color $1$ if and only if $f(u) = f(v)$. Suppose that $G$ contains $P_b^\mathrm{mon}$ as a monochromatic subgraph of color $2$ with the increasing embedding $\varphi \colon V(P_b^\mathrm{mon}) \to V(G)$. Then, for any $v \in \{ 0, 1, \ldots, b - 2 \}$, we have $f(\varphi(v)) \neq f(\varphi(v + 1))$, hence $f(\varphi(v + 1)) > f(\varphi(v))$. This means that the numbers $f(\varphi(0)), f(\varphi(1)), \ldots, f(\varphi(b - 1))$ are mutually distinct, which is impossible because the range of $f$ contains only $b - 1$ numbers.

    Now, suppose that $G$ contains $H$ as a monochromatic subgraph of color $1$ with the increasing embedding $\varphi \colon V(H) \to V(G)$. In this case, for any vertices $u, v \in V(H)$ adjacent in $H$, we have $f(\varphi(u)) = f(\varphi(v))$. Since $H$ is connected, this implies that $f$ maps all the numbers $\varphi(0), \varphi(1), \ldots, \varphi(a - 1)$ to the same value. However, this is impossible since each element in the codomain of $f$ is the image of exactly $a - 1$ numbers.
\end{proof}

\noindent
As direct corollaries of Theorem \ref{pmon_con_th}, we obtain the following results on ordered Ramsey numbers.

\begin{corollary}
    For any $a, b \in \mathbb{N}$ and path graph $P_a$ of order $a$, we have $R_\mathrm{ord}(P_a, P_b^\mathrm{mon}) = 1 + (a - 1)(b - 1)$.
\end{corollary}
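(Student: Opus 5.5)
This is a direct specialization of Theorem \ref{pmon_con_th}. The plan is to note that any path graph $P_a$ of order $a$ on the vertex set $\{0, 1, \ldots, a-1\}$, with its vertices arranged along the path in any order, is a connected graph of order $a$. We then invoke Theorem \ref{pmon_con_th} with $H = P_a$, which immediately gives $R_\mathrm{ord}(P_a, P_b^\mathrm{mon}) = 1 + (a-1)(b-1)$ for every $b \in \mathbb{N}$.

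The only point needing care is the degenerate case $a = 1$, where $P_1$ is a single vertex. It is still connected, and Theorem \ref{pmon_con_th} covers it, since its proof handles $\min\{a,b\} = 1$ separately; both sides then equal $1$.

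It is worth stressing that the statement concerns an \emph{arbitrary} path graph, not only $P_a^\mathrm{mon}$. It therefore includes $P_a^\mathrm{alt}$, $P_a^\mathrm{ralt}$, and every other vertex ordering of a path. This is consistent with the theorem, because neither of its two ingredients depends on the ordering of $H$:
\begin{itemize}
\item the upper bound uses only $H \subseteq K_a$ together with Theorem \ref{pmon_k_th};
\item the lower-bound coloring, which uses $b-1$ color-$1$ cliques of size $a-1$ and color $2$ between blocks, uses only the connectedness of $H$.
\end{itemize}
There is no real obstacle here; the one thing to verify is that connectedness is the sole hypothesis of Theorem \ref{pmon_con_th}, and every path graph satisfies it.
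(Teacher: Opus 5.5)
Your proposal is correct and matches the paper: the corollary is obtained directly from Theorem \ref{pmon_con_th} by observing that every path graph $P_a$, whatever its vertex ordering, is connected. Your remarks on the case $a=1$ and on the arbitrary vertex ordering are accurate.
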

\begin{corollary}\label{cyc_pmon_cor}
    For any $a \ge 3$ and $b \in \mathbb{N}$, and cycle graph $C_a$ of order $a$, we have $R_\mathrm{ord}(C_a, P_b^\mathrm{mon}) = 1 + (a - 1) \linebreak (b - 1)$.
\end{corollary}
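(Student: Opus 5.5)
This is an immediate specialization of Theorem \ref{pmon_con_th}, so the plan is simply to verify its hypotheses. Let $C_a$ be any cycle graph of order $a \ge 3$ on the vertex set $\{0, 1, \ldots, a-1\}$, with an arbitrary cyclic arrangement of the vertices. The statement places no restriction on this arrangement. Since every cycle graph is connected, Theorem \ref{pmon_con_th} applies with $H = C_a$, and it gives $R_\mathrm{ord}(C_a, P_b^\mathrm{mon}) = 1 + (a-1)(b-1)$ for every $b \in \mathbb{N}$.

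For completeness, I would recall which parts of the argument use what. The upper bound needs only that $C_a$ is a subgraph of $K_a$, together with Theorem \ref{pmon_k_th}.

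The lower bound uses the coloring built in the proof of Theorem \ref{pmon_con_th}. Vertices of $K_{(a-1)(b-1)}$ are split into $b-1$ consecutive blocks of size $a-1$. Edges inside a block get color $1$, and edges between blocks get color $2$.
\begin{itemize}
    \item A monochromatic copy of $C_a$ in color $1$ would, by connectivity, lie inside a single block. This is impossible, since each block has only $a-1$ vertices.
    \item A monotone path in color $2$ must visit strictly increasing blocks. There are only $b-1$ blocks, so it has at most $b-1$ vertices and cannot be a copy of $P_b^\mathrm{mon}$.
\end{itemize}

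There is no real obstacle here. The only point to check is that connectivity is the sole property of $H$ used in Theorem \ref{pmon_con_th}. The order of the vertices of $C_a$ is irrelevant, so the result holds for every labeling of the cycle, including the monotone cycle $C_a^\mathrm{mon}$.
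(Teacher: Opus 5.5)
Your proposal is correct and takes the same approach as the paper: the corollary is obtained by applying Theorem \ref{pmon_con_th} to $H = C_a$, using only that every cycle graph is connected. Your recap of the upper bound via Theorem \ref{pmon_k_th} and of the block coloring for the lower bound matches the proof of that theorem.
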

\begin{corollary}\label{star_pmon_cor}
    For any $a, b \in \mathbb{N}$ and star graph $S_a$ of order $a$, we have $R_\mathrm{ord}(S_a, P_b^\mathrm{mon}) = 1 + (a - 1)(b - 1)$.
\end{corollary}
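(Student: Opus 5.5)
This is a direct specialization of Theorem \ref{pmon_con_th}, so the plan is simply to check its hypothesis. Fix $a, b \in \mathbb{N}$ and a star graph $S_a$ of order $a$, with any vertex $c \in \{0, 1, \ldots, a-1\}$ as its center; all other vertices are adjacent to $c$. I will verify that $S_a$ is connected: for $a = 1$ the single vertex graph is trivially connected, and for $a \ge 2$ any two vertices are joined by a path of length at most two through $c$. Theorem \ref{pmon_con_th} with $H = S_a$ then gives $R_\mathrm{ord}(S_a, P_b^\mathrm{mon}) = 1 + (a - 1)(b - 1)$ at once.

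The only point needing care is that the statement covers stars with arbitrary centers, not just the start-central star $S_a^\mathrm{sc}$. This causes no difficulty, because Theorem \ref{pmon_con_th} does not depend on how the vertices of $H$ are labelled, only on connectivity. The coloring built in its proof, with color $1$ exactly inside consecutive blocks of size $a-1$, forbids every connected ordered graph on $a$ vertices at once. There is therefore no real obstacle: the whole argument is the one-line connectivity check followed by invoking Theorem \ref{pmon_con_th}.
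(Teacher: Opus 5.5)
Your proposal is correct and matches the paper, which obtains this corollary directly from Theorem~\ref{pmon_con_th} using only the fact that every star is connected. Your observation that the position of the center does not matter is also right, since that theorem applies to every vertex labelling of a connected graph.
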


With these results in mind, we consider monotone paths only in the study of cyclic Ramsey numbers, apart from the case of combining monotone paths with nested matchings, since $M_n^\mathrm{nest}$ is disconnected for any even $n \ge 4$. Next, we provide a theorem that mirrors Theorem \ref{pmon_con_th} and gives a lower bound on the cyclic Ramsey numbers of connected graphs versus monotone paths.

\begin{theorem}\label{pmon_con_th_2}
    Let $H$ be a connected graph of order $a \in \mathbb{N}$. Then for any $b \in \mathbb{N}$, we have
    \[
        1 + (a - 1)(b - 2) \le R_\mathrm{cyc}(H, P_b^\mathrm{mon}) \le 1 + (a - 1)(b - 1).
    \]
\end{theorem}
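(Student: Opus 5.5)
The upper bound is immediate from results already available. By Proposition \ref{cyc_basic_prop}, $R_\mathrm{cyc}(H, P_b^\mathrm{mon}) \le R_\mathrm{ord}(H, P_b^\mathrm{mon})$, and Theorem \ref{pmon_con_th} evaluates the right-hand side as $1 + (a - 1)(b - 1)$. All the work therefore goes into the lower bound. The cases $a = 1$ and $b \le 2$ are trivial: for $b = 1$ the bound reads $1 + (a-1)(-1) \le 1$, and for $b = 2$ it reads $1 \le R_\mathrm{cyc}$. So I would assume $a \ge 2$ and $b \ge 3$. Set $n = (a - 1)(b - 2)$. The goal is a $2$-edge-coloring of $K_n$ that contains neither $H$ in color $1$ nor $P_b^\mathrm{mon}$ in color $2$, where embeddings only need to be increasing up to a cyclic permutation.

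I would reuse the block coloring from the proof of Theorem \ref{pmon_con_th}, now with one fewer block. Define $f(x) = \lfloor x/(a-1) \rfloor$, which maps $\{0, \ldots, n-1\}$ onto $\{0, \ldots, b-3\}$. Color an edge $\{u, v\}$ with color $1$ exactly when $f(u) = f(v)$. Each color-$1$ component is then a block of exactly $a - 1$ vertices. Since $H$ is connected, any color-$1$ copy of $H$ would have all $a$ of its vertices in a single block, which is impossible; this holds under any embedding, so the cyclic relaxation changes nothing here.

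The main step is excluding a color-$2$ copy of $P_b^\mathrm{mon}$ under a cyclically increasing embedding $\varphi$. Such an embedding means there is some $t$ for which $\varphi(t) < \varphi(t+1) < \cdots < \varphi(b-1) < \varphi(0) < \cdots < \varphi(t-1)$. Every path edge $\{v, v+1\}$ has color $2$, so $f(\varphi(v)) \ne f(\varphi(v+1))$.
\begin{itemize}
\item On the segment $\varphi(t), \ldots, \varphi(b-1)$, the values of $f$ strictly increase.
\item On the segment $\varphi(0), \ldots, \varphi(t-1)$, the values of $f$ also strictly increase.
\item The two segments are not joined by a path edge, because the edge $\{b-1, 0\}$ is absent from $P_b^\mathrm{mon}$. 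They are, however, joined in cyclic position, and the largest vertex of the whole image is $\varphi(t-1)$ (or $\varphi(b-1)$ when $t = 0$).
\end{itemize}

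The counting then goes as follows. Let $F$ be the $f$-value sequence read in the global vertex order $\varphi(t), \ldots, \varphi(b-1), \varphi(0), \ldots, \varphi(t-1)$. This sequence is nondecreasing overall. It is strictly increasing everywhere except possibly at the single junction between $\varphi(b-1)$ and $\varphi(0)$, where equality is allowed. A nondecreasing sequence of length $b$ with at most one non-strict step takes at least $b - 1$ distinct values. Only $b - 2$ values are available, so this is a contradiction and no such copy exists.

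The part that needs the most care is checking that the cyclic wrap-around really is the only place where the sequence can fail to be strict, and hence that exactly one block is lost compared with Theorem \ref{pmon_con_th}. This is precisely the source of $b - 2$ in place of $b - 1$. It also explains why the construction cannot be pushed to $b - 1$ blocks: with that many blocks, a color-$2$ path could use one block at both ends.
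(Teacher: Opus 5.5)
Your proof is correct and is essentially the paper's own argument. The upper bound comes from Proposition~\ref{cyc_basic_prop} and Theorem~\ref{pmon_con_th}, and the lower bound from the block coloring on $(a-1)(b-2)$ vertices, where the $f$-values along a cyclically increasing color-$2$ copy of $P_b^\mathrm{mon}$ are nondecreasing and strict except possibly at the junction between $\varphi(b-1)$ and $\varphi(0)$, forcing $b-1$ distinct values among only $b-2$ available.
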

\begin{proof}
    The upper bound follows directly from Proposition \ref{cyc_basic_prop} and Theorem \ref{pmon_con_th}, while the lower bound is trivial when $a = 1$ or $b \le 2$. Now, suppose that $a \ge 2$ and $b \ge 3$. To complete the proof, it remains to construct a $2$-edge-coloring of $K_n$ with $n = (a - 1)(b - 2)$ that contains neither $H$ as a monochromatic subgraph of color~$1$ nor $P_b^\mathrm{mon}$ as a monochromatic subgraph of color $2$, with the vertices required to appear in the same cyclic order.

    Let $f \colon \{ 0, 1, 2, \ldots, n - 1 \} \to \{ 0, 1, 2, \ldots, b - 3 \}$ be the function defined by $f(x) \coloneqq \lfloor \frac{x}{a - 1} \rfloor$. Now, let $G$ be the $2$-edge-coloring of $K_n$ such that the edge between any two distinct vertices $u, v \in \{ 0, 1, 2, \ldots, n - 1 \}$ is colored with color~$1$ if and only if $f(u) = f(v)$. As in Theorem~\ref{pmon_con_th}, one can show that $G$ does not contain $H$ as a monochromatic subgraph of color~$1$, so we omit the proof.

    Now, by way of contradiction, assume that $G$ contains $P_b^\mathrm{mon}$ as a monochromatic subgraph of color $2$ with the embedding $\varphi \colon V(P_b^\mathrm{mon}) \to V(G)$ that is increasing up to a cyclic permutation. Since there exists $t \in \{ 0, 1, 2, \ldots, b - 1 \}$ such that $(\varphi(t), \varphi(t + 1), \ldots, \varphi(b - 1), \varphi(0), \varphi(1), \ldots, \varphi(t - 1))$ is an increasing sequence, it follows that $(f(\varphi(t)), f(\varphi(t + 1)), \ldots, f(\varphi(b - 1)), f(\varphi(0)), f(\varphi(1)), \ldots, f(\varphi(t - 1)))$ is nondecreasing. Using the same argument as in Theorem \ref{pmon_con_th}, we conclude that
    \[
        f(\varphi(t)) < f(\varphi(t + 1)) < \cdots < f(\varphi(b - 1)) \le f(\varphi(0)) < f(\varphi(1)) < \cdots < f(\varphi(t - 1)),
    \]
    hence these $b$ numbers have at least $b - 1$ different values. This yields a contradiction since the range of $f$ contains only $b - 2$ elements.
\end{proof}

We mention in passing that the upper bound in Theorem \ref{pmon_con_th_2} is sharp; see Corollary \ref{k_pmon_cyc_cor}. In addition, the lower bound also appears to be sharp; see Conjecture \ref{s_pmon_cyc_conj}. The following theorem can also be proved using the same technique as in the proofs of Theorems \ref{pmon_con_th} and \ref{pmon_con_th_2}, so we omit the proof.

\begin{theorem}\label{one_more_th}
    Let $H$ be a connected graph of order $a \in \mathbb{N}$. Then for any $b \ge 2$, we have $R_\mathrm{cyc}(H, C_b^\mathrm{mon}) \ge 1 + (a - 1)(b - 1)$.
\end{theorem}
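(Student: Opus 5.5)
We construct a $2$-edge-coloring of $K_n$ with $n = (a-1)(b-1)$ that contains neither $H$ in color~$1$ nor $C_b^\mathrm{mon}$ in color~$2$, both in cyclic order. The bound is trivial when $a = 1$, since then $1 + (a-1)(b-1) = 1$. For $b = 2$ we have $C_2^\mathrm{mon} = K_2$, and the bound $R_\mathrm{cyc}(H, K_2) \ge a$ is witnessed by the all-color-$1$ coloring of $K_{a-1}$. This case is also covered by the general construction below, so we assume $a \ge 2$ and $b \ge 2$.

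As in the proofs of Theorems \ref{pmon_con_th} and \ref{pmon_con_th_2}, let $f(x) \coloneqq \lfloor \frac{x}{a-1} \rfloor$, which maps $\{0, 1, \ldots, n-1\}$ onto $\{0, 1, \ldots, b-2\}$, with each block having exactly $a-1$ elements. Color the edge $\{u, v\}$ with color~$1$ if and only if $f(u) = f(v)$.

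\emph{Color~$1$.} The absence of $H$ in color~$1$ does not depend on any ordering. Connectivity of $H$ forces every vertex of an embedded copy into a single block of $f$. That block has only $a - 1 < a$ vertices, so no embedding of any kind exists.

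\emph{Color~$2$.} Suppose $\varphi$ embeds $C_b^\mathrm{mon}$ in color~$2$ and is increasing up to a cyclic permutation starting at some $t$. Write $w_i = \varphi((t+i) \bmod b)$ for $i = 0, \ldots, b-1$, so $w_0 < w_1 < \cdots < w_{b-1}$. Because $C_b^\mathrm{mon}$ is invariant under rotation, consecutive $w_i$ are adjacent in the cycle, and so are $w_{b-1}$ and $w_0$. Each such pair is colored~$2$, so it lies in different blocks. Since $f$ is nondecreasing along $w_0 < \cdots < w_{b-1}$, we get the strict chain $f(w_0) < f(w_1) < \cdots < f(w_{b-1})$. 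This gives $b$ distinct values among only $b-1$ available, a contradiction.

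Notice that the closing edge $\{w_{b-1}, w_0\}$ is not even needed. This is precisely where the monotone cycle improves on the path case of Theorem \ref{pmon_con_th_2}: there the path could "wrap around" the cyclic break point, which cost one block.

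The only step requiring care is the reindexing. One must check that a cyclic shift of the vertex labels of $C_b^\mathrm{mon}$ preserves its edge set, so that the cyclically increasing embedding becomes an honestly increasing embedding of the same cycle. For $b \ge 3$ this holds because the edge set $\{\{i, (i+1) \bmod b\}\}$ is rotation invariant. For $b = 2$ the graph is $K_2$, and the only edge joins the two blocks' worth of values, which is impossible since $f$ takes a single value. Everything else is routine.
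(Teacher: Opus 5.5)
Your proof is correct and is essentially the argument the paper intends; the paper omits the proof and refers to the technique of Theorems \ref{pmon_con_th} and \ref{pmon_con_th_2}. That technique is the block coloring via $f(x) = \lfloor x/(a-1) \rfloor$ on $(a-1)(b-1)$ vertices, which rules out the connected graph $H$ in color~$1$, together with rotation invariance of $C_b^\mathrm{mon}$, which turns any cyclically increasing copy in color~$2$ into a genuinely increasing monotone path needing $b$ distinct blocks out of only $b-1$.
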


We note that the bound in Theorem \ref{one_more_th} is sharp; see Theorem \ref{cmon_pmon_cyc_th}. In the remainder of the section, we present the computational results obtained via Kissat. While doing so, we assume that both argument graphs have at least three vertices, since otherwise it is trivial to obtain the desired ordered or cyclic Ramsey number.

\subsection{Paths and cycles}

We first investigate the ordered and cyclic Ramsey numbers involving paths and cycles. The computed cyclic Ramsey numbers of monotone paths versus monotone paths are shown in Table \ref{pmon_pmon_cyc_tab}. These results, together with Theorem \ref{pmon_con_th_2}, lead to the following conjecture.

\begin{conjecture}\label{pmon_pmon_cyc_conj}
    For any $b \ge a \ge 3$, we have $R_\mathrm{cyc}(P_a^\mathrm{mon}, P_b^\mathrm{mon}) = 1 + (a - 1)(b - 2)$.
\end{conjecture}

\begin{table}[H]
\centering
\footnotesize
\begin{tabular}{|c||rrrrrrrrrrrrr|}
\hline
\backslashbox{$a$}{$b$} & $3$ & $4$ & $5$ & $6$ & $7$ & $8$ & $9$ & $10$ & $11$ & $12$ & $13$ & $14$ & $15$\\
\hline
\hline
$3$ & $3$ & $5$ & $7$ & $9$ & $11$ & $13$ & $15$ & $17$ & $19$ & $21$ & $23$& $\ge 25$& $\ge 26$\\
$4$ & & $7$ & $10$ & $13$ & $16$ & $19$ & $22$ & $25$& $\ge 25$& $\ge 26$ & & &\\
$5$ & & & $13$ & $17$ & $21$ & $25$& $\ge 28$ & & & & & &\\
$6$ & & & & $21$ & $26$& $\ge 31$ & & & & & & &\\
$7$ & & & && $\ge 31$ & $\ge 34$ & & & & & & &\\
$8$ & & & & && $\ge 34$ & & & & & & &\\
\hline
\end{tabular}
\caption{Cyclic Ramsey numbers $R_\mathrm{cyc}(P_a^\mathrm{mon}, P_b^\mathrm{mon})$ with $3 \le a \le b$, $a \le 8$ and $b \le 15$.}
\label{pmon_pmon_cyc_tab}
\end{table}

Motivated by the Ramsey numbers summarized in Table \ref{alt_tab}, we study the ordered and cyclic Ramsey numbers of alternating paths versus alternating paths. The computed values are reported in Tables \ref{palt_palt_ord_tab} and \ref{palt_palt_cyc_tab}. Although it does not seem easy to notice a pattern for the ordered Ramsey numbers, the cyclic numbers exhibit a behavior leading to the following conjecture.

\begin{conjecture}
    For any $a, b \ge 2$, we have $R_\mathrm{cyc}(P_a^\mathrm{alt}, P_b^\mathrm{alt}) = a + b - 2 - (ab \bmod 2)$.
\end{conjecture}

\noindent
As in the case of ordered Ramsey numbers, the cyclic Ramsey numbers of monotone paths seem to be larger and grow more rapidly than those of alternating paths.

\begin{table}[H]
\centering
\footnotesize
\resizebox{\textwidth}{!}{
\begin{tabular}{|c||rrrrrrrrrrrrrrrrrrrr|}
\hline
\backslashbox{$a$}{$b$} & $3$ & $4$ & $5$ & $6$ & $7$ & $8$ & $9$ & $10$ & $11$ & $12$ & $13$ & $14$ & $15$ & $16$ & $17$ & $18$ & $19$ & $20$ & $21$ & $22$\\
\hline
\hline
$3$ & $4$ & $6$ & $7$ & $8$ & $9$ & $11$ & $12$ & $13$ & $14$ & $15$ & $16$ & $18$ & $19$ & $20$ & $21$ & $22$ & $23$ & $24$ & $25$ & $27$\\
$4$ & & $7$ & $8$ & $10$ & $11$ & $12$ & $14$ & $15$ & $16$ & $17$ & $19$ & $20$ & $21$& $\ge 22$& $\ge 23$& $\ge 24$& $\ge 26$ & & &\\
$5$ & & & $9$ & $11$ & $12$ & $14$ & $15$ & $17$ & $18$ & $19$ & $20$& $\ge 22$& $\ge 23$& $\ge 24$& $\ge 25$ & & & & &\\
$6$ & & & & $12$ & $14$ & $15$ & $17$ & $18$ & $19$& $\ge 21$& $\ge 22$& $\ge 23$& $\ge 24$ & & & & & & &\\
$7$ & & & & & $15$ & $16$ & $18$& $\ge 19$& $\ge 21$& $\ge 22$ & & & & & & & & & &\\
$8$ & & & & & & $17$& $\ge 19$& $\ge 20$& $\ge 22$ & & & & & & & & & & &\\
\hline
\end{tabular}
}
\caption{Ordered Ramsey numbers $R_\mathrm{ord}(P_a^\mathrm{alt}, P_b^\mathrm{alt})$ with $3 \le a \le b$, $a \le 8$ and $b \le 22$.}
\label{palt_palt_ord_tab}
\end{table}

\begin{table}[H]
\centering
\footnotesize
\resizebox{\textwidth}{!}{
\begin{tabular}{|c||rrrrrrrrrrrrrrrrrr|}
\hline
\backslashbox{$a$}{$b$} & $3$ & $4$ & $5$ & $6$ & $7$ & $8$ & $9$ & $10$ & $11$ & $12$ & $13$ & $14$ & $15$ & $16$ & $17$ & $18$ & $19$ & $20$\\
\hline
\hline
$3$ & $3$ & $5$ & $5$ & $7$ & $7$ & $9$ & $9$ & $11$ & $11$ & $13$ & $13$ & $15$ & $15$ & $17$ & $17$ & $19$ & $19$ & $21$\\
$4$ & & $6$ & $7$ & $8$ & $9$ & $10$ & $11$ & $12$ & $13$ & $14$ & $15$ & $16$& $\ge 17$& $\ge 18$& $\ge 19$& $\ge 20$& $\ge 21$& $\ge 22$\\
$5$ & & & $7$ & $9$ & $9$ & $11$ & $11$ & $13$& $\ge 13$& $\ge 15$& $\ge 15$& $\ge 17$& $\ge 17$& $\ge 19$& $\ge 19$& $\ge 21$& $\ge 21$& $\ge 23$\\
$6$ & & & & $10$ & $11$ & $12$ & $13$& $\ge 14$& $\ge 15$& $\ge 16$& $\ge 17$& $\ge 18$& $\ge 19$& $\ge 20$& $\ge 21$& $\ge 22$& $\ge 23$& $\ge 24$\\
$7$ & & & && $11$& $\ge 13$& $\ge 13$& $\ge 15$& $\ge 15$& $\ge 17$ & & & & & & & &\\
$8$ & & & & && $\ge 14$& $\ge 15$& $\ge 16$ & & & & & & & & & &\\
\hline
\end{tabular}
}
\caption{Cyclic Ramsey numbers $R_\mathrm{cyc}(P_a^\mathrm{alt}, P_b^\mathrm{alt})$ with $3 \le a \le b$, $a \le 8$ and $b \le 20$.}
\label{palt_palt_cyc_tab}
\end{table}

We now turn to mixed comparisons and first consider the cyclic Ramsey numbers of alternating paths versus monotone paths; see Table \ref{palt_pmon_cyc_tab}. In this case, the pattern seems to differ depending on whether the monotone path argument is $P_3^\mathrm{mon}$. Since $P_3^\mathrm{mon} \cong_\rho P_3^\mathrm{alt}$, we focus only on the case where the monotone path is of order at least four. In this situation, the following conjecture naturally arises.

\begin{conjecture}
    For any $a \ge 3$ and $b \ge 4$, we have $R_\mathrm{cyc}(P_a^\mathrm{alt}, P_b^\mathrm{mon}) = 1 + (a - 1)(b - 2)$.
\end{conjecture}

\begin{table}[H]
\centering
\footnotesize
\begin{tabular}{|c||rrrrrrrrrrrrr|}
\hline
\backslashbox{$a$}{$b$} & $3$ & $4$ & $5$ & $6$ & $7$ & $8$ & $9$ & $10$ & $11$ & $12$ & $13$ & $14$ & $15$\\
\hline
\hline
$3$ & $3$ & $5$ & $7$ & $9$ & $11$ & $13$ & $15$ & $17$ & $19$ & $21$ & $23$& $\ge 25$& $\ge 26$\\
$4$ & $5$ & $7$ & $10$ & $13$ & $16$ & $19$ & $22$& $\ge 25$& $\ge 27$ & & & &\\
$5$ & $5$ & $9$ & $13$ & $17$ & $21$ & $25$ & $\ge 29$ & & & & & &\\
$6$ & $7$ & $11$ & $16$ & $21$ & $26$& $\ge 30$& & & & & & &\\
$7$ & $7$ & $13$ & $19$ & $25$& $\ge 30$& $\ge 32$& & & & & & &\\
$8$ & $9$ & $15$ & $22$& $\ge 29$& $\ge 31$& $\ge 32$ & & & & & & &\\
$9$ & $9$ & $17$& $\ge 25$& $\ge 29$& & & & & & & & &\\
$10$ & $11$ & $19$& $\ge 28$ & & & & & & & & & &\\
$11$ & $11$& $\ge 21$& & & & & & & & & & & \\
$12$ & $13$& $\ge 23$& & & & & & & & & & &\\
$13$ & $13$& $\ge 25$& & & & & & & & & & &\\
$14$ & $15$& $\ge 26$ & & & & & & & & & & &\\
$15$ & $15$& & & & & & & & & & & &\\
$16$ & $17$& & & & & & & & & & & &\\
\hline
\end{tabular}
\caption{Cyclic Ramsey numbers $R_\mathrm{cyc}(P_a^\mathrm{alt}, P_b^\mathrm{mon})$ with $3 \le a \le 16$ and $3 \le b \le 15$.}
\label{palt_pmon_cyc_tab}
\end{table}

As the next step, we extend the analysis to reverse alternating paths and investigate the ordered and cyclic Ramsey numbers of alternating paths versus reverse alternating paths. The resulting values are given in Tables~\ref{palt_pralt_ord_tab} and \ref{palt_pralt_cyc_tab}. As in the case of alternating paths versus alternating paths, the ordered Ramsey numbers seem to follow a pattern that is difficult to decipher, while the cyclic numbers appear to be the same as when both arguments are alternating paths.

\begin{conjecture}\label{palt_pralt_cyc_conj}
    For any $a, b \ge 2$, we have $R_\mathrm{cyc}(P_a^\mathrm{alt}, P_b^\mathrm{ralt}) = a + b - 2 - (ab \bmod 2)$.
\end{conjecture}

\begin{table}[H]
\centering
\footnotesize
\begin{tabular}{|c||rrrrrrrrrrrrrrrrrr|}
\hline
\backslashbox{$a$}{$b$} & $3$ & $4$ & $5$ & $6$ & $7$ & $8$ & $9$ & $10$ & $11$ & $12$ & $13$ & $14$ & $15$ & $16$ & $17$ & $18$ & $19$ & $20$\\
\hline
\hline
$3$ & $5$ & $6$ & $7$ & $8$ & $10$ & $11$ & $12$ & $13$ & $14$ & $15$ & $17$ & $18$ & $19$ & $20$ & $21$ & $22$ & $23$ & $24$\\
$4$ & & $7$ & $8$ & $10$ & $11$ & $12$ & $14$ & $15$ & $16$ & $17$ & $19$ & $20$ & $21$& $\ge 22$ & & & &\\
$5$ & & & $10$ & $11$ & $13$ & $14$ & $15$ & $17$ & $18$& $\ge 19$& $\ge 21$ & & & & & & &\\
$6$ & & & & $12$ & $14$ & $15$ & $17$& $\ge 18$& $\ge 19$ & & & & & & & & &\\
$7$ & & & & & $15$ & $16$& $\ge 18$& $\ge 19$ & & & & & & & & & &\\
$8$ & & & & & & $18$& $\ge 19$ & & & & & & & & & & &\\
\hline
\end{tabular}
\caption{Ordered Ramsey numbers $R_\mathrm{ord}(P_a^\mathrm{alt}, P_b^\mathrm{ralt})$ with $3 \le a \le b$, $a \le 8$ and $b \le 20$.}
\label{palt_pralt_ord_tab}
\end{table}

\begin{table}[H]
\centering
\footnotesize
\begin{tabular}{|c||rrrrrrrrrrrrrrrrrr|}
\hline
\backslashbox{$a$}{$b$} & $3$ & $4$ & $5$ & $6$ & $7$ & $8$ & $9$ & $10$ & $11$ & $12$ & $13$ & $14$ & $15$ & $16$ & $17$ & $18$ & $19$ & $20$\\
\hline
\hline
$3$ & $3$ & $5$ & $5$ & $7$ & $7$ & $9$ & $9$ & $11$ & $11$ & $13$ & $13$ & $15$ & $15$ & $17$ & $17$ & $19$ & $19$ & $21$\\
$4$ & & $6$ & $7$ & $8$ & $9$ & $10$ & $11$ & $12$ & $13$ & $14$ & $15$ & $16$ & $\ge 17$ & $\ge 18$ & $\ge 19$ & & &\\
$5$ & & & $7$ & $9$ & $9$ & $11$ & $11$ & $13$ & $\ge 13$ & $\ge 15$ & $\ge 15$ & & & & & & &\\
$6$ & & & & $10$ & $11$ & $12$ & $13$ & $\ge 14$ & $\ge 15$ & $\ge 16$ & & & & & & & &\\
$7$ & & & & & $\ge 11$ & $\ge 13$ & $\ge 13$ & $\ge 15$ & & & & & & & & & &\\
$8$ & & & & & & $\ge 14$ & $\ge 15$ & $\ge 16$ & & & & & & & & & &\\
\hline
\end{tabular}
\caption{Cyclic Ramsey numbers $R_\mathrm{cyc}(P_a^\mathrm{alt}, P_b^\mathrm{ralt})$ with $3 \le a \le b$, $a \le 8$ and $b \le 20$.}
\label{palt_pralt_cyc_tab}
\end{table}

Theorem \ref{cyc_cyc_ord_th} determines all the ordered Ramsey numbers of monotone cycles. Since any monotone cycle is rotationally isomorphic only to itself, the ordered and cyclic Ramsey numbers coincide when all arguments are monotone cycles. This leads to the next corollary.

\begin{corollary}
    For any $a, b \ge 2$, we have $R_\mathrm{cyc}(C_a^\mathrm{mon}, C_b^\mathrm{mon}) = 2ab - 3a - 3b + 6$.
\end{corollary}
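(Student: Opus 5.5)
The corollary should follow from Theorem \ref{cyc_cyc_ord_th} together with the observation that, when the argument graphs are monotone cycles, the cyclic and the ordered embedding conditions are the same. We will compare the two sets of forbidden configurations directly. By Proposition \ref{cyc_basic_prop} we already have $R_\mathrm{cyc}(C_a^\mathrm{mon}, C_b^\mathrm{mon}) \le R_\mathrm{ord}(C_a^\mathrm{mon}, C_b^\mathrm{mon})$. So it remains to show the reverse inequality. Concretely, we will show that every $2$-edge-coloring of $K_n$ without an ordered monochromatic copy of $C_a^\mathrm{mon}$ in color $1$ or of $C_b^\mathrm{mon}$ in color $2$ also has no cyclically ordered such copy.

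The key step is the following claim. Let $\varphi \colon V(C_m^\mathrm{mon}) \to V(K_n)$ be an embedding that is increasing up to a cyclic permutation, with shift $t$, and which maps edges of $C_m^\mathrm{mon}$ to edges of one color. Then $\psi(v) \coloneqq \varphi((v + t) \bmod m)$ is an increasing embedding of the rotated graph into the same edges. The rotation $v \mapsto (v - t) \bmod m$ is an automorphism of $C_m^\mathrm{mon}$, since it sends $\{i, i+1 \bmod m\}$ to $\{i - t, i - t + 1\} \bmod m$. Hence $\psi$ is an increasing embedding of $C_m^\mathrm{mon}$ itself whose edges receive the same color. This is the precise form of the statement that a monotone cycle is rotationally isomorphic only to itself.

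The degenerate case $m = 2$, where $C_2^\mathrm{mon} = K_2$, is immediate. Any $2$-tuple of distinct vertices, once sorted, gives an increasing embedding of the single edge.

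Consequently the clause sets $\mathcal{C}_j$ and $\mathcal{O}_j$ forbid exactly the same colorings, so the ordered and cyclic numbers coincide. Theorem \ref{cyc_cyc_ord_th} then yields the value $2ab - 3a - 3b + 6$. There is no real obstacle here. The only point requiring care is checking the bookkeeping of the rotation, namely that the index shift preserves the edge set including the wrap-around edge $\{0, m-1\}$.
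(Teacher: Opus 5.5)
Your proof is correct and takes the same approach as the paper. The paper deduces the corollary from Theorem \ref{cyc_cyc_ord_th} by noting that a monotone cycle is rotationally isomorphic only to itself, so the ordered and cyclic Ramsey numbers coincide; you make explicit the rotation bookkeeping (composing $\varphi$ with the automorphism $v \mapsto (v+t) \bmod m$ of $C_m^\mathrm{mon}$) that the paper leaves implicit.
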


We conclude this subsection by combining monotone cycles with monotone and alternating paths. For the cyclic Ramsey numbers of monotone cycles versus monotone paths, we directly determine all of these numbers instead of presenting a table with computed values.

\begin{theorem}\label{cmon_pmon_cyc_th}
    For any $a \ge 2$ and $b \in \mathbb{N}$, we have $R_\mathrm{cyc}(C_a^\mathrm{mon}, P_b^\mathrm{mon}) = 1 + (a - 1)(b - 1)$.
\end{theorem}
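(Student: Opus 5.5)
The plan is to sandwich $R_\mathrm{cyc}(C_a^\mathrm{mon}, P_b^\mathrm{mon})$ between two bounds that are both already available in the paper. No new construction should be needed.

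\emph{Upper bound.} For $a \ge 3$ the monotone cycle $C_a^\mathrm{mon}$ is a connected graph of order $a$. For $a = 2$ we have $C_2^\mathrm{mon} = K_2$, which is also connected. So the upper bound in Theorem~\ref{pmon_con_th_2}, applied with $H = C_a^\mathrm{mon}$, gives $R_\mathrm{cyc}(C_a^\mathrm{mon}, P_b^\mathrm{mon}) \le 1 + (a - 1)(b - 1)$ for every $b \in \mathbb{N}$. Equivalently, this follows from Proposition~\ref{cyc_basic_prop} together with Theorem~\ref{pmon_con_th}.

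\emph{Lower bound.} First note that the cyclic Ramsey number is symmetric in its two arguments: swapping the two colors in a $2$-edge-coloring turns a coloring avoiding $(H_1, H_2)$ into one avoiding $(H_2, H_1)$. Hence
\[
R_\mathrm{cyc}(C_a^\mathrm{mon}, P_b^\mathrm{mon}) = R_\mathrm{cyc}(P_b^\mathrm{mon}, C_a^\mathrm{mon}).
\]
Now apply Theorem~\ref{one_more_th} with the connected graph $H = P_b^\mathrm{mon}$ of order $b$ and with the monotone cycle $C_a^\mathrm{mon}$, where $a \ge 2$. This gives $R_\mathrm{cyc}(P_b^\mathrm{mon}, C_a^\mathrm{mon}) \ge 1 + (b - 1)(a - 1)$. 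Combining this with the upper bound proves the claimed equality.

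\emph{The step needing care.} The only real content is the lower bound, which rests on Theorem~\ref{one_more_th}, and the paper omits that proof. If it has to be supplied, I would take $n = (a - 1)(b - 1)$ and the block map $f(x) = \lfloor x/(a-1) \rfloor$ onto $b - 1$ blocks of size $a - 1$. Color an edge with the $P_b^\mathrm{mon}$ color exactly when its endpoints lie in the same block, and with the $C_a^\mathrm{mon}$ color exactly when they lie in different blocks.

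\begin{itemize}
\item A connected $P_b^\mathrm{mon}$ in the within-block color would have to lie inside a single block of size $a - 1 < b$, which is impossible.
\item A $C_a^\mathrm{mon}$ in the cross-block color, embedded increasing up to rotation, visits its $a$ vertices in cyclic order. Consecutive vertices lie in different blocks, so the block indices strictly increase along the cycle except at the single wrap-around step. That step, from the last vertex in the cyclic order back to the first, is itself a cross-block edge, so it also joins distinct blocks. This forces $a$ distinct block values among only $a - 1$ blocks, a contradiction.
\end{itemize}

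The second bullet is the one place to check carefully: one must confirm that the wrap-around edge is genuinely an edge of $C_a^\mathrm{mon}$, which holds because every rotation of the cycle is the cycle itself. For $a = 2$ the cycle is just an edge, and the argument degenerates harmlessly: there is a single block, so all edges are within-block and there is no cross-block edge at all.
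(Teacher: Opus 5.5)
Your sandwich argument is correct and is essentially the paper's proof. The upper bound comes from Proposition~\ref{cyc_basic_prop} together with the ordered value. The paper cites Corollary~\ref{cyc_pmon_cor}, which is stated only for $a \ge 3$; your route through Theorem~\ref{pmon_con_th_2} (i.e., Theorem~\ref{pmon_con_th}) also covers $a=2$. The lower bound comes from Theorem~\ref{one_more_th} applied with $H = P_b^\mathrm{mon}$, and you make explicit the color-swap symmetry that the paper uses tacitly when it says that theorem ``directly gives'' the bound.

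\textbf{Error in your optional sketch of Theorem~\ref{one_more_th}.} There the roles of $a$ and $b$ are exchanged, so the blocks must have size $b-1$, not $a-1$. As written, you use $f(x) = \lfloor x/(a-1) \rfloor$ with $b-1$ blocks of size $a-1$, and this coloring fails whenever $a \ne b$:
\begin{itemize}
\item The inequality $a-1<b$ in your first bullet is false in general. For $a=5$, $b=3$, the vertices $0,1,2$ form a within-block $P_3^\mathrm{mon}$ in the block $\{0,1,2,3\}$.
\item Your second bullet counts $a-1$ blocks, although you set up $b-1$ of them. For $a=3$, $b=5$, the vertices $0,2,4$ give a cross-block $C_3^\mathrm{mon}$.
\end{itemize}
The fix is to take $f(x) = \lfloor x/(b-1) \rfloor$, giving $a-1$ blocks of size $b-1$. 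Then your first bullet holds with block size $b-1<b$, and your second bullet and your $a=2$ remark are correct as stated. (Under your written setup, $a=2$ would give $b-1$ singleton blocks, not a single block.)

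\textbf{Minor point on the second bullet.} The $a$ distinct block values already come from the $a-1$ steps between consecutive vertices in host order. Each of these steps is an edge of $C_a^\mathrm{mon}$, and this is where you use that $\{a-1,0\}$ is an edge. The host-order wrap-around edge, from the largest vertex back to the smallest, plays no role in the count.
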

\begin{proof}
    Theorem \ref{one_more_th} directly gives $R_\mathrm{cyc}(C_a^\mathrm{mon}, P_b^\mathrm{mon}) \ge 1 + (a - 1)(b - 1)$. By Proposition~\ref{cyc_basic_prop} and Corollary~\ref{cyc_pmon_cor}, we have $R_\mathrm{cyc}(C_a^\mathrm{mon}, P_b^\mathrm{mon}) \le R_\mathrm{ord}(C_a^\mathrm{mon}, P_b^\mathrm{mon}) = 1 + (a - 1)(b - 1)$. Therefore, $R_\mathrm{cyc}(C_a^\mathrm{mon}, P_b^\mathrm{mon}) = 1 + (a - 1) \linebreak (b - 1)$.
\end{proof}

\noindent
As is turns out, the ordered and cyclic Ramsey numbers of monotone cycles versus monotone paths coincide.

Finally, Tables \ref{cmon_palt_ord_tab} and \ref{cmon_palt_cyc_tab} contain the computed ordered and cyclic Ramsey numbers of monotone cycles versus alternating paths, respectively. Although there is not much computational data available, it makes sense to pose the following two conjectures.

\begin{conjecture}\label{cmon_palt_ord_conj}
    For any $a \ge 3$ and $b \ge 2$, we have $R_\mathrm{ord}(C_a^\mathrm{mon}, P_b^\mathrm{alt}) = \lceil (a - \frac{1}{2})(b - 1) \rceil$.
\end{conjecture}

\begin{conjecture}\label{cmon_palt_cyc_conj}
    For any $a \ge 2$ and $b \in \mathbb{N}$, we have $R_\mathrm{cyc}(C_a^\mathrm{mon}, P_b^\mathrm{alt}) = 1 + (a - 1)(b - 1)$.
\end{conjecture}

\noindent
Note that the cyclic Ramsey numbers of monotone cycles versus alternating paths appear to be the same as those of monotone cycles versus monotone paths.

\begin{table}[H]
\centering
\footnotesize
\begin{tabular}{|c||rrrrrrrr|}
\hline
\backslashbox{$a$}{$b$} & $3$ & $4$ & $5$ & $6$ & $7$ & $8$ & $9$ & $10$\\
\hline
\hline
$3$ & $5$ & $8$ & $10$ & $13$ & $15$ & $18$ & $\ge 20$ & $\ge 23$\\
$4$ & $7$ & $11$ & $14$ & $18$ & $\ge 21$ & $\ge 25$ & $\ge 28$ &\\
$5$ & $9$ & $14$ & $18$ & $23$ & $\ge 27$ & $\ge 32$ & $\ge 34$ &\\
$6$ & $11$ & $17$ & $22$ & $\ge 28$ & $\ge 33$ & $\ge 38$ & &\\
$7$ & $13$ & $20$ & $\ge 26$ & $\ge 33$ & $\ge 37$ & & &\\
$8$ & $15$ & $23$ & $\ge 30$ & $\ge 37$ & $\ge 42$ & & &\\
$9$ & $17$ & $\ge 26$ & $\ge 33$ & $\ge 37$ & & & &\\
$10$ & $19$ & $\ge 29$ & $\ge 34$ & & & & &\\
\hline
\end{tabular}
\caption{Ordered Ramsey numbers $R_\mathrm{ord}(C_a^\mathrm{mon}, P_b^\mathrm{alt})$ with $3 \le a, b \le 10$.}
\label{cmon_palt_ord_tab}
\end{table}

\begin{table}[H]
\centering
\footnotesize
\begin{tabular}{|c||rrrrrrrr|}
\hline
\backslashbox{$a$}{$b$} & $3$ & $4$ & $5$ & $6$ & $7$ & $8$ & $9$ & $10$\\
\hline
\hline
$3$ & $5$ & $7$ & $9$ & $11$ & $13$ & $15$ & $\ge 17$ & $\ge 19$\\
$4$ & $7$ & $10$ & $13$ & $16$ & $19$ & $\ge 22$ & $\ge 25$ & $\ge 25$\\
$5$ & $9$ & $13$ & $17$ & $21$ & $\ge 25$ & $\ge 29$ & $\ge 27$ &\\
$6$ & $11$ & $16$ & $21$ & $\ge 26$ & $\ge 30$ & $\ge 32$ & &\\
$7$ & $13$ & $19$ & $\ge 25$ & $\ge 31$ & $\ge 31$ & & &\\
$8$ & $15$ & $22$ & $\ge 26$ & $\ge 30$ & $\ge 29$ & & &\\
$9$ & $17$ & $\ge 25$ & $\ge 27$ & $\ge 30$ & & & &\\
$10$ & $19$ & $\ge 27$ & $\ge 28$ & & & & &\\
\hline
\end{tabular}
\caption{Cyclic Ramsey numbers $R_\mathrm{cyc}(C_a^\mathrm{mon}, P_b^\mathrm{alt})$ with $3 \le a, b \le 10$.}
\label{cmon_palt_cyc_tab}
\end{table}

\subsection{Stars versus paths and cycles}

A well-known result by Burr and Roberts \cite{BurrRo1973} determines all standard Ramsey numbers whose arguments are star graphs.

\begin{theorem}[\hspace{1sp}{\cite{BurrRo1973}}]\label{star_cool_th}
    For any $k \in \mathbb{N}$ and $n_1, n_2, \ldots, n_k \ge 2$, let $t$ be the number of odd integers among $n_1, n_2, \ldots, n_k$. Then we have
    \[
        R(S_{n_1}, S_{n_2}, \ldots, S_{n_k}) = \begin{cases}
            \sum_{j = 1}^k (n_j - 2) + 1, & \mbox{if $t$ is even and positive},\\
            \sum_{j = 1}^k (n_j - 2) + 2, & \mbox{otherwise}.
        \end{cases}
    \]
\end{theorem}

\noindent
With Proposition \ref{prel_star} in mind, Theorem \ref{star_cool_th} determines all the cyclic Ramsey numbers for star graphs, while Proposition~\ref{prel_star_2} does the same for the ordered Ramsey numbers involving start-central stars.

We now turn our attention to the ordered and cyclic Ramsey numbers where one argument is a start-central star, while the other is a path or a cycle. We begin by investigating the cyclic Ramsey numbers of start-central stars versus monotone paths; see Table \ref{ssc_pmon_cyc_tab}. Our computational results suggest the following conjecture.

\begin{conjecture}\label{s_pmon_cyc_conj}
    For any $a \ge 3$ and $b \ge 4$, we have $R_\mathrm{cyc}(S_a, P_b^\mathrm{mon}) = 1 + (a - 1)(b - 2)$.
\end{conjecture}

\begin{table}[H]
\centering
\footnotesize
\begin{tabular}{|c||rrrrrrrrrrrrr|}
\hline
\backslashbox{$a$}{$b$} & $3$ & $4$ & $5$ & $6$ & $7$ & $8$ & $9$ & $10$ & $11$ & $12$ & $13$ & $14$ & $15$\\
\hline
\hline
$3$ & $3$ & $5$ & $7$ & $9$ & $11$ & $13$ & $15$ & $17$ & $19$ & $21$ & $23$ & $\ge 25$ & $\ge 26$\\
$4$ & $5$ & $7$ & $10$ & $13$ & $16$ & $19$ & $22$ & $25$ & $\ge 27$ & & & &\\
$5$ & $5$ & $9$ & $13$ & $17$ & $21$ & $25$ & $\ge 29$ & & & & & &\\
$6$ & $7$ & $11$ & $16$ & $21$ & $26$ & $\ge 31$ & & & & & & &\\
$7$ & $7$ & $13$ & $19$ & $25$ & $\ge 31$ & $\ge 33$ & & & & & & &\\
$8$ & $9$ & $15$ & $22$ & $29$ & $\ge 34$ & & & & & & & &\\
$9$ & $9$ & $17$ & $25$ & $\ge 30$ & & & & & & & & &\\
$10$ & $11$ & $19$ & $\ge 28$ & & & & & & & & & &\\
$11$ & $11$ & $21$ & & & & & & & & & & &\\
$12$ & $13$ & $23$ & & & & & & & & & & &\\
$13$ & $13$ & $25$ & & & & & & & & & & &\\
$14$ & $15$ & $\ge 26$ & & & & & & & & & & &\\
$15$ & $15$ & & & & & & & & & & & &\\
\hline
\end{tabular}
\caption{Cyclic Ramsey numbers $R_\mathrm{cyc}(S_a, P_b^\mathrm{mon})$ with $3 \le a, b \le 15$.}
\label{ssc_pmon_cyc_tab}
\end{table}

As before, the ordered Ramsey numbers involving alternating or reverse alternating paths do not seem to exhibit an easily discernible structure; see Tables \ref{ssc_palt_ord_tab} and \ref{ssc_pralt_ord_tab}. On the other hand, our computed values of cyclic Ramsey numbers of stars versus alternating paths, which are given in Table \ref{ssc_palt_cyc_tab}, lead to the following conjecture.

\begin{conjecture}\label{s_palt_cyc_conj}
    For any $a \ge 3$ and $b \ge 4$, we have $R_\mathrm{cyc}(S_a, P_b^\mathrm{alt}) = a + b - 2 - (ab \bmod 2)$.
\end{conjecture}

\begin{table}[H]
\centering
\footnotesize
\begin{tabular}{|c||rrrrrrrrrrrrr|}
\hline
\backslashbox{$a$}{$b$} & $3$ & $4$ & $5$ & $6$ & $7$ & $8$ & $9$ & $10$ & $11$ & $12$ & $13$ & $14$ & $15$\\
\hline
\hline
$3$ & $5$ & $6$ & $7$ & $8$ & $10$ & $11$ & $12$ & $13$ & $14$ & $15$ & $17$ & $18$ & $19$\\
$4$ & $6$ & $7$ & $9$ & $10$ & $12$ & $13$ & $14$ & $15$ & $17$ & $18$ & $19$ & $\ge 20$ & $\ge 21$\\
$5$ & $8$ & $9$ & $11$ & $12$ & $13$ & $14$ & $16$ & $17$ & $\ge 19$ & $\ge 20$ & $\ge 21$ & &\\
$6$ & $9$ & $10$ & $12$ & $13$ & $15$ & $16$ & $\ge 18$ & $\ge 19$ & $\ge 20$ & & & &\\
$7$ & $10$ & $11$ & $14$ & $15$ & $\ge 17$ & $\ge 18$ & $\ge 19$ & & & & & &\\
$8$ & $12$ & $13$ & $15$ & $16$ & $\ge 18$ & $\ge 19$ & $\ge 21$ & & & & & &\\
$9$ & $13$ & $14$ & $\ge 16$ & $\ge 17$ & $\ge 20$ & & & & & & & &\\
$10$ & $14$ & $15$ & $\ge 18$ & $\ge 19$ & $\ge 21$ & & & & & & & &\\
$11$ & $15$ & $16$ & $\ge 19$ & $\ge 20$ & $\ge 22$ & & & & & & & &\\
$12$ & $17$ & $18$ & $\ge 20$ & $\ge 21$ & $\ge 24$ & & & & & & & &\\
$13$ & $\ge 18$ & $\ge 19$ & $\ge 22$ & & & & & & & & & &\\
$14$ & $\ge 19$ & $\ge 20$ & $\ge 23$ & & & & & & & & & &\\
$15$ & $\ge 20$ & $\ge 21$ & $\ge 24$ & & & & & & & & & &\\
\hline
\end{tabular}
\caption{Ordered Ramsey numbers $R_\mathrm{ord}(S_a^\mathrm{sc}, P_b^\mathrm{alt})$ with $3 \le a, b \le 15$.}
\label{ssc_palt_ord_tab}
\end{table}

\begin{table}[H]
\centering
\footnotesize
\begin{tabular}{|c||rrrrrrrrrrrrr|}
\hline
\backslashbox{$a$}{$b$} & $3$ & $4$ & $5$ & $6$ & $7$ & $8$ & $9$ & $10$ & $11$ & $12$ & $13$ & $14$ & $15$\\
\hline
\hline
$3$ & $4$ & $6$ & $7$ & $8$ & $9$ & $11$ & $12$ & $13$ & $14$ & $15$ & $16$ & $18$ & $19$\\
$4$ & $5$ & $7$ & $8$ & $10$ & $11$ & $13$ & $14$ & $15$ & $16$ & $18$ & $19$ & $20$ & $\ge 21$\\
$5$ & $6$ & $9$ & $10$ & $12$ & $13$ & $14$ & $15$ & $17$ & $\ge 18$ & $\ge 20$ & $\ge 21$ & &\\
$6$ & $7$ & $10$ & $11$ & $13$ & $14$ & $16$ & $\ge 17$ & $\ge 19$ & $\ge 20$ & & & &\\
$7$ & $8$ & $11$ & $12$ & $15$ & $16$ & $\ge 18$ & $\ge 19$ & $\ge 20$ & & & & &\\
$8$ & $9$ & $13$ & $14$ & $16$ & $\ge 17$ & $\ge 19$ & $\ge 20$ & & & & & &\\
$9$ & $10$ & $14$ & $15$ & $\ge 17$ & $\ge 18$ & $\ge 21$ & & & & & & &\\
$10$ & $11$ & $15$ & $16$ & $\ge 19$ & $\ge 20$ & $\ge 22$ & & & & & & &\\
$11$ & $12$ & $16$ & $17$ & $\ge 20$ & $\ge 21$ & $\ge 23$ & & & & & & &\\
$12$ & $13$ & $18$ & $\ge 19$ & $\ge 21$ & $\ge 22$ & & & & & & & &\\
$13$ & $14$ & $\ge 19$ & $\ge 20$ & $\ge 23$ & & & & & & & & &\\
$14$ & $15$ & $\ge 20$ & $\ge 21$ & $\ge 24$ & & & & & & & & &\\
$15$ & $16$ & $\ge 21$ & $\ge 22$ & $\ge 25$ & & & & & & & & &\\
\hline
\end{tabular}
\caption{Ordered Ramsey numbers $R_\mathrm{ord}(S_a^\mathrm{sc}, P_b^\mathrm{ralt})$ with $3 \le a, b \le 15$.}
\label{ssc_pralt_ord_tab}
\end{table}

\begin{table}[H]
\centering
\footnotesize
\begin{tabular}{|c||rrrrrrrrrrrrr|}
\hline
\backslashbox{$a$}{$b$} & $3$ & $4$ & $5$ & $6$ & $7$ & $8$ & $9$ & $10$ & $11$ & $12$ & $13$ & $14$ & $15$\\
\hline
\hline
$3$ & $3$ & $5$ & $5$ & $7$ & $7$ & $9$ & $9$ & $11$ & $11$ & $13$ & $13$ & $15$ & $15$\\
$4$ & $5$ & $6$ & $7$ & $8$ & $9$ & $10$ & $11$ & $12$ & $13$ & $14$ & $15$ & $16$ & $\ge 17$\\
$5$ & $5$ & $7$ & $7$ & $9$ & $9$ & $11$ & $11$ & $13$ & $\ge 13$ & $\ge 15$ & $\ge 15$ & &\\
$6$ & $7$ & $8$ & $9$ & $10$ & $11$ & $12$ & $\ge 13$ & $\ge 14$ & $\ge 15$ & & & &\\
$7$ & $7$ & $9$ & $9$ & $11$ & $\ge 11$ & $\ge 13$ & $\ge 13$ & & & & & &\\
$8$ & $9$ & $10$ & $11$ & $12$ & $\ge 13$ & $\ge 14$ & $\ge 15$ & & & & & &\\
$9$ & $9$ & $11$ & $11$ & $\ge 13$ & $\ge 13$ & $\ge 15$ & & & & & & &\\
$10$ & $11$ & $12$ & $13$ & $\ge 14$ & $\ge 15$ & $\ge 16$ & & & & & & &\\
$11$ & $11$ & $13$ & $\ge 13$ & $\ge 15$ & $\ge 15$ & & & & & & & &\\
$12$ & $13$ & $14$ & $\ge 15$ & $\ge 16$ & $\ge 17$ & & & & & & & &\\
$13$ & $13$ & $15$ & $\ge 15$ & $\ge 17$ & $\ge 17$ & & & & & & & &\\
$14$ & $15$ & $16$ & $\ge 17$ & $\ge 18$ & $\ge 19$ & & & & & & & &\\
$15$ & $15$ & $17$ & $\ge 17$ & $\ge 19$ & $\ge 19$ & & & & & & & &\\
\hline
\end{tabular}
\caption{Cyclic Ramsey numbers $R_\mathrm{cyc}(S_a, P_b^\mathrm{alt})$ with $3 \le a, b \le 15$.}
\label{ssc_palt_cyc_tab}
\end{table}

In other words, the cyclic Ramsey numbers of stars versus alternating paths seem to be the same as those of alternating paths versus alternating paths, or alternating paths versus reverse alternating paths. We mention in passing that, by a reflection argument, $R_\mathrm{cyc}(S_a, P_b^\mathrm{alt}) = R_\mathrm{cyc}(S_a, P_b^\mathrm{ralt})$ holds for any $a, b \in \mathbb{N}$.

Now, we combine start-central stars with monotone cycles and consider the corresponding ordered and cyclic Ramsey numbers. Instead of presenting tables with computed values, we directly determine all of these Ramsey numbers.

\begin{theorem}\label{ssc_cmon_ord_th}
    For any $a \in \mathbb{N}$ and $b \ge 2$, we have $R_\mathrm{ord}(S_a^\mathrm{sc}, C_b^\mathrm{mon}) = 1 + (a - 1)(b - 1)$.
\end{theorem}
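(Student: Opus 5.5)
The plan is to obtain the lower bound from the cyclic results already established and to prove the upper bound directly by an Erdős--Szekeres-type argument that is anchored at vertex $0$. The cases $a \le 2$ are trivial, so I assume $a \ge 3$. For the lower bound, note that $S_a^\mathrm{sc}$ is connected. Theorem~\ref{one_more_th} then gives $R_\mathrm{cyc}(S_a^\mathrm{sc}, C_b^\mathrm{mon}) \ge 1 + (a-1)(b-1)$. Proposition~\ref{cyc_basic_prop} gives $R_\mathrm{ord} \ge R_\mathrm{cyc}$, so $R_\mathrm{ord}(S_a^\mathrm{sc}, C_b^\mathrm{mon}) \ge 1 + (a-1)(b-1)$. (One could instead reuse the block coloring from the proof of Theorem~\ref{pmon_con_th}.)

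For the upper bound, let $n = 1 + (a-1)(b-1)$ and fix a $2$-edge-coloring of $K_n$ with no color-$1$ ordered copy of $S_a^\mathrm{sc}$. This hypothesis says exactly that every vertex $v$ has at most $a-2$ neighbors $w > v$ joined to it in color $1$. Now look at vertex $0$. At least $(n-1) - (a-2) = (a-1)(b-2) + 1$ of the vertices in $\{1, \ldots, n-1\}$ are joined to $0$ in color $2$; call this set $N$.

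Inside $N$, for each $v \in N$ let $g(v)$ be the number of vertices in a longest color-$2$ monotone path within $N$ that ends at $v$. If $u < w$ are in $N$ with $g(u) = g(w)$, then the edge $uw$ must have color $1$, since otherwise the path could be extended. So each level set of $g$ is a set in which all edges have color $1$. Its smallest element therefore has all the other members of its level set as forward neighbors in color $1$. By the star-free hypothesis, every level set has at most $a-1$ elements. Since $|N| \ge (a-1)(b-2) + 1$, pigeonhole gives some $v$ with $g(v) \ge b-1$. This yields a color-$2$ monotone path $u_1 < u_2 < \cdots < u_{b-1}$ inside $N$. Putting $0$ in front gives the vertex sequence $0 < u_1 < \cdots < u_{b-1}$. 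Its edges $0u_1$ and $0u_{b-1}$ have color $2$ because $u_1, u_{b-1} \in N$, so these $b$ vertices carry a color-$2$ copy of $C_b^\mathrm{mon}$ in the correct order.

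The case $b = 2$ needs separate treatment, since there $C_2^\mathrm{mon} = K_2$. Here $n = a$, and any color-$2$ edge gives the required copy; if there is none, vertex $0$ has $a-1$ forward neighbors in color $1$, which is a copy of $S_a^\mathrm{sc}$. The main point to get right is the choice of anchoring the cycle at vertex $0$. The star condition is used twice: once to bound the number of color-$1$ neighbors of $0$, and once to bound the level sets of $g$. The arithmetic $(a-1)(b-1) - (a-2) = (a-1)(b-2) + 1$ is exactly what makes the pigeonhole step work.
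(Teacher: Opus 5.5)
Your proof is correct and follows essentially the same route as the paper. Both arguments anchor at vertex $0$, pass to its color-$2$ forward neighborhood of size at least $1 + (a-1)(b-2)$, find a color-$2$ copy of $P_{b-1}^\mathrm{mon}$ there, and close the cycle through $0$. The differences are minor. You prove the path-finding step inline with the level-set (Erd\H{o}s--Szekeres) argument instead of citing Corollary~\ref{star_pmon_cor}. You also get the lower bound from Theorem~\ref{one_more_th} and Proposition~\ref{cyc_basic_prop}, whereas the paper uses Corollary~\ref{star_pmon_cor} together with $P_b^\mathrm{mon} \subseteq C_b^\mathrm{mon}$.
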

\begin{proof}
    Since $P_b^\mathrm{mon}$ is a subgraph of $C_b^\mathrm{mon}$, Corollary \ref{star_pmon_cor} implies
    \[
        R_\mathrm{ord}(S_a^\mathrm{sc}, C_b^\mathrm{mon}) \ge R_\mathrm{ord}(S_a^\mathrm{sc}, P_b^\mathrm{mon}) = 1 + (a - 1)(b - 1) .
    \]
    Thus, it suffices to prove that any $2$-edge-coloring of $K_n$ with $n = 1 + (a - 1)(b - 1)$ contains $S_a^\mathrm{sc}$ as a monochromatic subgraph of color $1$ or $C_b^\mathrm{mon}$ as a monochromatic subgraph of color $2$, with the vertices required to appear in the same order.

     Let $G$ be a $2$-edge-coloring of $K_n$ that does not contain $S_a^\mathrm{sc}$ as a forbidden subgraph. Now, let $G_1$ be the subgraph of $G$ induced on the vertices $v > 0$ such that the edge $\{ 0, v \}$ is of color $2$. Observe that $|G_1| \ge n - (a - 1) = 1 + (a - 1)(b - 2)$. Since $G_1$ does not contain $S_a^\mathrm{sc}$ as a monochromatic subgraph of color $1$ with the vertices appearing in the same order, Corollary \ref{star_pmon_cor} implies that $G_1$ contains $P_{b - 1}^\mathrm{mon}$ as a monochromatic subgraph of color $2$ with a consistent vertex ordering. By attaching the vertex $0$ to the endpoints of this path in the original graph $G$, we obtain a forbidden monochromatic subgraph $C_b^\mathrm{mon}$.
\end{proof}

\noindent
The following result is an immediate corollary of Proposition \ref{cyc_basic_prop}, Theorem \ref{one_more_th} and Theorem \ref{ssc_cmon_ord_th}.

\begin{corollary}
    For any $a \in \mathbb{N}$ and $b \ge 2$, we have $R_\mathrm{cyc}(S_a, C_b^\mathrm{mon}) = 1 + (a - 1)(b - 1)$.
\end{corollary}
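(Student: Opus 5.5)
The corollary follows by sandwiching $R_\mathrm{cyc}(S_a, C_b^\mathrm{mon})$ between two bounds that both equal $1 + (a - 1)(b - 1)$. One bound comes from Theorem \ref{one_more_th}, the other from Theorem \ref{ssc_cmon_ord_th} through Proposition \ref{cyc_basic_prop}.

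For the lower bound, we apply Theorem \ref{one_more_th} with $H = S_a$. A star of order $a$ is connected for every $a \in \mathbb{N}$; the case $a = 1$ gives the single-vertex graph, which is trivially connected. Since $b \ge 2$, the theorem yields $R_\mathrm{cyc}(S_a, C_b^\mathrm{mon}) \ge 1 + (a - 1)(b - 1)$. We may take the particular star to be $S_a^\mathrm{sc}$, since all stars of the same order are equivalent for the cyclic problem, as observed before Proposition \ref{prel_star}. The same equivalence also follows from Proposition \ref{cyc_rot_inv_prop}: rotating the vertex labels moves the center to $0$.

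For the upper bound, we use Proposition \ref{cyc_basic_prop} to get $R_\mathrm{cyc}(S_a^\mathrm{sc}, C_b^\mathrm{mon}) \le R_\mathrm{ord}(S_a^\mathrm{sc}, C_b^\mathrm{mon})$. By Theorem \ref{ssc_cmon_ord_th}, the right-hand side equals $1 + (a - 1)(b - 1)$. Combining the two inequalities, with $S_a$ identified with $S_a^\mathrm{sc}$ in the cyclic setting, gives the claimed equality.

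The only point that needs any care is this identification of $S_a$ with $S_a^\mathrm{sc}$. Theorem \ref{ssc_cmon_ord_th} is stated only for the start-central star, while the corollary uses an arbitrary star. Every star $S_a$ is rotationally isomorphic to $S_a^\mathrm{sc}$, because the rotation $v \mapsto (v - c) \bmod a$, where $c$ is the center, sends the center to $0$. Proposition \ref{cyc_rot_inv_prop} then lets us replace $S_a$ by $S_a^\mathrm{sc}$ without changing the cyclic Ramsey number.
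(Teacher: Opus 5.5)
Your proof is correct and follows the paper's route: the paper also treats the result as an immediate corollary, with the lower bound from Theorem~\ref{one_more_th} (stars are connected) and the upper bound from Proposition~\ref{cyc_basic_prop} combined with Theorem~\ref{ssc_cmon_ord_th}. Your argument that every star is rotationally isomorphic to $S_a^\mathrm{sc}$, so Proposition~\ref{cyc_rot_inv_prop} applies, makes explicit the identification of $S_a$ with $S_a^\mathrm{sc}$ that the paper uses without comment.
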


\subsection{Complete graphs versus paths, cycles and stars}

By Proposition \ref{cyc_basic_prop}, when all arguments are complete graphs, the cyclic Ramsey numbers coincide with the ordered and the standard Ramsey numbers. For further results on these numbers, the reader can refer to the survey \cite{Radziszowski} by Radziszowski and references therein.

In the present subsection, we study the ordered and cyclic Ramsey numbers where one of the arguments is a complete graph, while the other is a path, a cycle, or a star. A well-known result by Parsons \cite{Parsons1973} determines all the standard Ramsey numbers of complete graphs versus paths.

\begin{theorem}[\hspace{1sp}{\cite{Parsons1973}}]
    For any $a, b \in \mathbb{N}$, we have $R(K_a, P_b) = 1 + (a - 1)(b - 1)$.
\end{theorem}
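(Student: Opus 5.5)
The proof has two halves: a lower-bound construction of $(a-1)(b-1)$ vertices, and an upper bound obtained by induction on $a$. We may assume $a,b\ge 2$, since otherwise the statement is trivial (for $a=1$ both sides equal $1$, and for $b=1$ both equal $1$).

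\emph{Lower bound.} This is the construction from the proof of Theorem \ref{pmon_con_th}, now read as an unordered coloring. Take $n=(a-1)(b-1)$ vertices split into $b-1$ blocks of size $a-1$. Color edges inside a block with color $2$, so that every component of the color-$2$ graph is a clique on $a-1<b$ vertices; hence there is no color-$2$ copy of $P_b$, since a connected subgraph lies within one component. Color edges between blocks with color $1$, so that the color-$1$ graph is $(b-1)$-partite; hence there is no color-$1$ copy of $K_a$, since $a$ vertices would put two of them in the same block. Note that the colors are swapped relative to Theorem \ref{pmon_con_th}: here $K_a$ is the first argument, so color $1$ carries the complete-multipartite structure. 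Therefore $R(K_a,P_b)\ge 1+(a-1)(b-1)$.

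\emph{Upper bound.} The plan is to induct on $a$, the case $a=2$ being trivial. For $a=2$, either some edge has color $1$ and gives $K_2$, or all edges have color $2$ and give $P_b$ on $b$ vertices. For the inductive step, assume the result for $a-1$ and let $n=1+(a-1)(b-1)$. Suppose the coloring has no color-$1$ $K_a$. Since $n\ge 1+(a-2)(b-1)$, the induction hypothesis yields a color-$2$ path; I will instead take a \emph{longest} color-$2$ path $Q$, with endpoints $x$ and $y$. If $Q$ has at least $b$ vertices we are done, so suppose it has at most $b-1$.

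\emph{Key step.} By maximality of $Q$, every vertex $w\notin V(Q)$ satisfies the following: the edges $\{w,x\}$ and $\{w,y\}$ both have color $1$, since otherwise $Q$ could be extended through $w$. Remove the vertices of $Q$. At least $n-(b-1)=1+(a-2)(b-1)$ vertices remain, and this number is at least $1+(a-2)(b-1)$, so by the induction hypothesis the remaining vertices contain either a color-$1$ $K_{a-1}$ or a color-$2$ $P_b$. A color-$2$ $P_b$ finishes the proof directly. In the other case, the color-$1$ $K_{a-1}$ consists of vertices outside $Q$, each joined to $x$ in color $1$, so adding $x$ gives a color-$1$ $K_a$, a contradiction.

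The main obstacle is justifying that each outside vertex is color-$1$ adjacent to an endpoint of the path. Taking $Q$ to be a longest color-$2$ path in the whole coloring, rather than one produced inside a subset, is exactly what makes the extension argument go through. The only other care needed is the small case of a one-vertex path $Q$, where $x=y$ and the argument is unchanged.
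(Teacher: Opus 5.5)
Your upper bound is correct. If a longest color-$2$ path $Q$ has at most $b-1$ vertices, maximality forces every vertex outside $Q$ to send a color-$1$ edge to the endpoint $x$. The induction hypothesis, applied to the at least $1+(a-2)(b-1)$ remaining vertices, then gives a color-$1$ $K_{a-1}$, and adding $x$ yields a color-$1$ $K_a$. The paper gives no proof and only cites Parsons. In its setting this half is also immediate from Theorem~\ref{pmon_con_th} with $H=K_a$: an increasing copy of $P_b^\mathrm{mon}$ is in particular a copy of $P_b$, so $R(K_a,P_b)\le R_\mathrm{ord}(K_a,P_b^\mathrm{mon})=1+(a-1)(b-1)$.

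The lower bound, however, is wrong as written. It is also the half the paper actually relies on, for the lower bound in Corollary~\ref{k_pmon_cyc_cor}.
\begin{itemize}
\item With $b-1$ blocks of size $a-1$, the color-$2$ graph is a disjoint union of copies of $K_{a-1}$, which contains $P_b$ whenever $a-1\ge b$.
\item The color-$1$ graph is complete $(b-1)$-partite with nonempty parts, so it contains $K_{b-1}$, and hence $K_a$, whenever $b-1\ge a$.
\end{itemize}
Your claim ``$a-1<b$'' and your pigeonhole claim (which needs $a>b-1$) hold simultaneously only when $a=b$. For instance, $a=5$, $b=3$ gives two color-$2$ copies of $K_4\supseteq P_3$, and $a=3$, $b=5$ gives a color-$1$ $K_{2,2,2,2}\supseteq K_3$.

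Swapping the colors of the construction from Theorem~\ref{pmon_con_th} is not enough. There, the ordering forces a color-$2$ monotone path to visit blocks in strictly increasing order, so the \emph{number} of blocks bounds its length. Without an ordering, a complete multipartite graph contains long paths, so the path must be trapped inside blocks, which must therefore have size $b-1$. The correct construction takes $a-1$ blocks of size $b-1$. Color edges inside blocks with color $2$: the components are copies of $K_{b-1}$, too small to contain $P_b$. Color edges between blocks with color $1$: this graph is $(a-1)$-partite, so it has no $K_a$ by pigeonhole.
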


\noindent
The above theorem, together with Theorem \ref{pmon_k_th} and Proposition \ref{cyc_basic_prop}, yields the following corollary.

\begin{corollary}\label{k_pmon_cyc_cor}
    For any $a, b \in \mathbb{N}$, we have $R_\mathrm{cyc}(K_a, P_b^\mathrm{mon}) = 1 + (a - 1)(b - 1)$.
\end{corollary}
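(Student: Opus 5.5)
The corollary follows by sandwiching $R_\mathrm{cyc}(K_a, P_b^\mathrm{mon})$ between two known quantities, both equal to $1 + (a-1)(b-1)$.

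For the upper bound, Proposition~\ref{cyc_basic_prop} gives $R_\mathrm{cyc}(K_a, P_b^\mathrm{mon}) \le R_\mathrm{ord}(K_a, P_b^\mathrm{mon})$. Since the two-color ordered Ramsey number is symmetric in its arguments (swap the two colors), Theorem~\ref{pmon_k_th} yields
\[
R_\mathrm{ord}(K_a, P_b^\mathrm{mon}) = R_\mathrm{ord}(P_b^\mathrm{mon}, K_a) = 1 + (b-1)(a-1).
\]
Alternatively, one can apply Theorem~\ref{pmon_con_th} directly with $H = K_a$, which is connected.

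For the lower bound, Proposition~\ref{cyc_basic_prop} gives $R(K_a, P_b^\mathrm{mon}) \le R_\mathrm{cyc}(K_a, P_b^\mathrm{mon})$. The standard Ramsey number ignores the vertex labelling, and $P_b^\mathrm{mon}$ is isomorphic to the path $P_b$. Hence Parsons' theorem gives $R(K_a, P_b^\mathrm{mon}) = R(K_a, P_b) = 1 + (a-1)(b-1)$.

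Combining the two bounds proves the equality. There is no real obstacle. The only points to state explicitly are the symmetry of the two-color ordered Ramsey number under swapping colors, and the fact that the standard Ramsey number depends only on the isomorphism class of its arguments. Trivial cases such as $a=1$ or $b=1$ are covered by the same formulas.
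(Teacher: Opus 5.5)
Your proof is correct and matches the paper's route: the paper obtains the corollary from Parsons' theorem for the lower bound (through $R \le R_\mathrm{cyc}$), from Theorem~\ref{pmon_k_th} for the upper bound (through $R_\mathrm{cyc} \le R_\mathrm{ord}$), and from Proposition~\ref{cyc_basic_prop} to sandwich the two. Your remarks on color-swap symmetry and on the standard Ramsey number depending only on isomorphism classes just make explicit what the paper leaves implicit.
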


\noindent
In other words, the standard, ordered and cyclic Ramsey numbers all coincide when one of the arguments is a complete graph and the other is a monotone path.

Recall that the ordered and cyclic Ramsey numbers of monotone paths appear to be larger than the corresponding numbers of alternating paths. If we compare these two types of paths in combination with the complete graphs, our computational results indicate that the situation seems to be different; see Tables~\ref{k_palt_ord_tab} and~\ref{k_palt_cyc_tab}. As it turns out, alternating paths appear to yield identical cyclic Ramsey numbers and larger ordered Ramsey numbers. Moreover, the results suggest the following conjectures.

\begin{conjecture}\label{k_palt_ord_conj}
    For any $a \ge 2$ and $b \ge 3$, we have $R_\mathrm{ord}(K_a, P_b^\mathrm{alt}) = \lceil \frac{3ab - 5a}{2} \rceil - 2b + 5$.
\end{conjecture}

\begin{conjecture}\label{k_palt_cyc_conj}
    For any $a, b \in \mathbb{N}$, we have $R_\mathrm{cyc}(K_a, P_b^\mathrm{alt}) = 1 + (a - 1)(b - 1)$.
\end{conjecture}

\begin{table}[H]
\centering
\footnotesize
\begin{tabular}{|c||rrrrrr|}
\hline
\backslashbox{$a$}{$b$} & $3$ & $4$ & $5$ & $6$ & $7$ & $8$\\
\hline
\hline
$3$ & $5$ & $8$ & $10$ & $13$ & $15$ & $18$\\
$4$ & $7$ & $11$ & $15$ & $\ge 19$ & $\ge 22$ & $\ge 26$\\
$5$ & $9$ & $15$ & $\ge 20$ & $\ge 25$ & $\ge 30$ &\\
$6$ & $11$ & $\ge 18$ & $\ge 25$ & $\ge 31$ & &\\
$7$ & $13$ & $\ge 22$ & $\ge 29$ & $\ge 36$ & &\\
$8$ & $15$ & $\ge 25$ & $\ge 33$ & $\ge 37$ & &\\
\hline
\end{tabular}
\caption{Ordered Ramsey numbers $R_\mathrm{ord}(K_a, P_b^\mathrm{alt})$ with $3 \le a, b \le 8$.}
\label{k_palt_ord_tab}
\end{table}

\begin{table}[H]
\centering
\footnotesize
\begin{tabular}{|c||rrrrrr|}
\hline
\backslashbox{$a$}{$b$} & $3$ & $4$ & $5$ & $6$ & $7$ & $8$\\
\hline
\hline
$3$ & $5$ & $7$ & $9$ & $11$ & $13$ & $15$\\
$4$ & $7$ & $10$ & $13$ & $\ge 16$ & $\ge 19$ & $\ge 22$\\
$5$ & $9$ & $13$ & $\ge 17$ & $\ge 21$ & $\ge 25$ &\\
$6$ & $11$ & $16$ & $\ge 21$ & $\ge 26$ & $\ge 31$ &\\
$7$ & $13$ & $\ge 19$ & $\ge 25$ & $\ge 31$ & &\\
$8$ & $15$ & $\ge 22$ & $\ge 29$ & & &\\
\hline
\end{tabular}
\caption{Cyclic Ramsey numbers $R_\mathrm{cyc}(K_a, P_b^\mathrm{alt})$ with $3 \le a, b \le 8$.}
\label{k_palt_cyc_tab}
\end{table}

Next, we study the ordered and cyclic Ramsey numbers of complete graphs versus monotone cycles. Since any complete graph or monotone cycle is rotationally isomorphic only to itself, it follows that $R_\mathrm{ord}(K_a, C_b^\mathrm{mon}) = R_\mathrm{cyc}(K_a, C_b^\mathrm{mon})$ for any $a \in \mathbb{N}$ and $b \ge 2$, i.e., the two numbers coincide for these pairs of graphs. The computed values are reported in Table \ref{k_cmon_ord_tab} and indicate very rapid growth.

\begin{table}[H]
\centering
\footnotesize
\begin{tabular}{|c||rrrrrr|}
\hline
\backslashbox{$a$}{$b$} & $3$ & $4$ & $5$ & $6$ & $7$ & $8$\\
\hline
\hline
$3$ & $6$ & $9$ & $12$ & $15$ & $18$ & $\ge 21$\\
$4$ & $9$ & $\ge 14$ & $\ge 19$ & $\ge 24$ & &\\
$5$ & $\ge 14$ & $\ge 23$ & $\ge 32$ & & &\\
$6$ & $\ge 18$ & $\ge 30$ & $\ge 39$ & & &\\
$7$ & $\ge 23$ & $\ge 36$ & $\ge 44$ & & &\\
$8$ & $\ge 28$ & $\ge 37$ & & & &\\
\hline
\end{tabular}
\caption{Ordered and cyclic Ramsey numbers $R_\mathrm{ord}(K_a, C_b^\mathrm{mon}) = R_\mathrm{cyc}(K_a, C_b^\mathrm{mon})$ with $3 \le a, b \le 8$.}
\label{k_cmon_ord_tab}
\end{table}

In the rest of the subsection, we combine complete graphs with start-central stars. We proceed by finding all the ordered Ramsey numbers of complete graphs versus start-central stars.

\begin{theorem}\label{k_ssc_ord_th}
    For any $a, b \in \mathbb{N}$, we have $R_\mathrm{ord}(K_a, S_b^\mathrm{sc}) = 1 + (a - 1)(b - 1)$.
\end{theorem}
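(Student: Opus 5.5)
We prove $R_\mathrm{ord}(K_a, S_b^\mathrm{sc}) = 1 + (a-1)(b-1)$ by matching a lower-bound construction with an induction for the upper bound. The cases $\min\{a,b\} = 1$ are trivial, so we assume $a, b \ge 2$.

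\textbf{Lower bound.} We reuse the block coloring from the proof of Theorem~\ref{pmon_con_th}, but with the roles of the blocks reversed. Set $n = (a-1)(b-1)$ and partition $\{0, 1, \ldots, n-1\}$ into $a-1$ consecutive blocks of size $b-1$, via $g(x) \coloneqq \lfloor \frac{x}{b-1} \rfloor$. Color an edge $\{u, v\}$ with color~$2$ if $g(u) = g(v)$, and with color~$1$ otherwise.

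In this coloring, a color-$1$ clique has its vertices in pairwise distinct blocks. There are only $a-1$ blocks, so no color-$1$ copy of $K_a$ exists.

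A color-$2$ copy of $S_b^\mathrm{sc}$ would need a center $v$ together with $b-1$ larger vertices, all in the same block as $v$. The block of $v$ has only $b-1$ elements including $v$ itself, so $v$ has at most $b-2$ larger neighbors of color~$2$. Hence no color-$2$ copy of $S_b^\mathrm{sc}$ exists, and $R_\mathrm{ord}(K_a, S_b^\mathrm{sc}) \ge 1 + (a-1)(b-1)$.

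\textbf{Upper bound.} We induct on $a$; the base case $a = 1$ is trivial. Take a $2$-edge-coloring of $K_n$ with $n = 1 + (a-1)(b-1)$ that has no color-$2$ copy of $S_b^\mathrm{sc}$. Look at vertex $0$: it has at most $b-2$ neighbors $v > 0$ joined to it by color~$2$. So at least $n - 1 - (b-2) = 1 + (a-2)(b-1)$ vertices $v > 0$ are joined to $0$ by color~$1$.

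These vertices induce a colored subgraph that still contains no color-$2$ copy of $S_b^\mathrm{sc}$. This holds because any subset of vertices, taken with the induced order, preserves order-respecting embeddings. By the induction hypothesis, this subgraph contains a color-$1$ copy of $K_{a-1}$. Every vertex of that copy lies above $0$, so adding vertex $0$ as the smallest element gives an ordered color-$1$ copy of $K_a$.

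\textbf{Main obstacle.} There is little difficulty in this argument. The one point to check is that restricting to an induced subset does not break the ordered-embedding condition. It does not, since the induced order on the subset is the restriction of the original order, so any order-respecting embedding into the subset is also order-respecting in $K_n$.
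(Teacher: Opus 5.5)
Your proof is correct and follows the paper's: the upper bound is the same induction on $a$ using the color-$1$ neighborhood of vertex $0$. For the lower bound, the paper cites Corollary~\ref{star_pmon_cor} via $P_a^\mathrm{mon} \subseteq K_a$, and the coloring behind that corollary is your block coloring from Theorem~\ref{pmon_con_th}, so you have simply written it out explicitly.
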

\begin{proof}
    The theorem trivially holds if $\min \{ a, b \} = 1$, so we assume that $a, b \ge 2$. Since $P_a^\mathrm{mon}$ is a subgraph of $K_a$, Corollary \ref{star_pmon_cor} implies
    \[
        R_\mathrm{ord}(K_a, S_b^\mathrm{sc}) \ge R_\mathrm{ord}(P_a^\mathrm{mon}, S_b^\mathrm{sc}) = 1 + (a - 1)(b - 1).
    \]
    Therefore, to complete the proof, it suffices to show that any $2$-edge-coloring of $K_n$ with $n = 1 + (a - 1)(b - 1)$ contains $K_a$ as a monochromatic subgraph of color $1$ or $S_b^\mathrm{sc}$ as a monochromatic subgraph of color $2$, with the vertices appearing in the same order.

    We carry out the proof by induction on $a$ using a similar approach to Theorem \ref{ssc_cmon_ord_th}. To begin, the statement trivially holds for $a = 2$, so we assume that $a \ge 3$. Let $G$ be a $2$-edge-coloring of $K_n$ that does not contain $S_b^\mathrm{sc}$ as a forbidden subgraph. Now, let $G_1$ be the subgraph of $G$ induced on the vertices $v > 0$ such that the edge $\{ 0, v \}$ is of color $1$. Observe that $|G_1| \ge n - (b - 1) = 1 + (a - 2)(b - 1)$. Since $G_1$ does not contain $S_b^\mathrm{sc}$ as a monochromatic subgraph of color $2$ with the vertices appearing in the same order, by induction, we have that $G_1$ contains $K_{a - 1}$ as a monochromatic subgraph of color $1$ with a consistent vertex ordering. By attaching the vertex $0$ to all the vertices of this copy of $K_{a - 1}$ in the original graph $G$, we obtain a forbidden monochromatic subgraph $K_a$.
\end{proof}

\noindent
Recall the following well-known result shown by Chvátal \cite{Chvatal1977}.

\begin{theorem}[\hspace{1sp}{\cite{Chvatal1977}}]
    For any $a, b \in \mathbb{N}$ and tree $T$ of order $a$, we have $R(T, K_b) = 1 + (a - 1)(b - 1)$.
\end{theorem}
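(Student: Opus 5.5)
The statement just recalled is Chvátal's classical theorem, $R(T, K_b) = 1 + (a-1)(b-1)$ for any tree $T$ of order $a$. I would prove it in two halves.

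For the lower bound, I would use the same block construction as in the proof of Theorem~\ref{pmon_con_th}, with the colors swapped. Take $b-1$ disjoint blocks of $a-1$ vertices each, so $n = (a-1)(b-1)$ vertices in total. Color every edge inside a block with color~$1$ and every edge between blocks with color~$2$. Since $T$ is connected, any color-$1$ copy of $T$ must lie inside a single block. A block has only $a-1$ vertices, so no such copy exists. A color-$2$ copy of $K_b$ needs its $b$ vertices in pairwise distinct blocks, which is impossible with only $b-1$ blocks. This gives $R(T,K_b) \ge 1+(a-1)(b-1)$. The cases $\min\{a,b\}=1$ are trivial.

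For the upper bound, I would induct on $b$. The case $b=1$ is trivial. For $b=2$, a coloring of $K_a$ with no color-$2$ edge is monochromatic of color $1$ and so contains $T$.

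Now fix $b\ge 3$ and a $2$-edge-coloring of $K_n$ with $n = 1+(a-1)(b-1)$ that contains no color-$2$ copy of $K_b$. Let $G_1$ be the color-$1$ graph. For every vertex $v$, the color-$2$ neighborhood $N_2(v)$ contains no color-$2$ copy of $K_{b-1}$, since adding $v$ would give a $K_b$. By induction, applied to $N_2(v)$ if it is large enough, either $|N_2(v)| \le (a-1)(b-2)$ or $G_1[N_2(v)]$ contains $T$. In the second case we are done. Otherwise every vertex has color-$1$ degree at least $n-1-(a-1)(b-2) = a-1$.

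The key lemma is then the standard greedy embedding: any graph with minimum degree at least $a-1$ contains every tree on $a$ vertices. To prove it, order the vertices of $T$ so that each vertex after the first has exactly one earlier neighbor, which is possible because $T$ is a tree. Embed the vertices one at a time. When a vertex is to be placed, its parent's image has at least $a-1$ neighbors in $G_1$. At most $a-2$ of these are already used, namely the images of the other earlier vertices, so a free neighbor remains.

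This yields the upper bound and completes the proof. The main point to check is the degree arithmetic, $n-1-(a-1)(b-2)=a-1$, which holds exactly. Beyond that, the argument only needs the induction hypothesis applied to a sub-coloring, which is the same device used in the proofs of Theorems~\ref{ssc_cmon_ord_th} and~\ref{k_ssc_ord_th}.
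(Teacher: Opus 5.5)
Your proof is correct and complete. The paper gives no proof of this statement (it only cites Chvátal), and your argument is essentially Chvátal's original proof: disjoint color-$1$ blocks for the lower bound, then induction on $b$, where either some color-$2$ neighborhood has at least $1+(a-1)(b-2)$ vertices or the color-$1$ graph has minimum degree at least $a-1$ and $T$ embeds greedily. One cosmetic slip: your lower-bound coloring is exactly the one in the proof of Theorem~\ref{pmon_con_th}, where intra-block edges already receive color~$1$, so no swapping of colors is involved.
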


\noindent
The standard and cyclic Ramsey numbers of complete graphs versus stars clearly coincide, hence we obtain the following result.

\begin{corollary}\label{k_ssc_cyc_th}
    For any $a, b \in \mathbb{N}$, we have $R_\mathrm{cyc}(K_a, S_b) = 1 + (a - 1)(b - 1)$.
\end{corollary}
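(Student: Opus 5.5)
The plan is to sandwich $R_\mathrm{cyc}(K_a, S_b)$ between the standard Ramsey number and the ordered Ramsey number. Both bounds will come out equal to $1 + (a - 1)(b - 1)$.

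For the upper bound, I would invoke Proposition \ref{prel_star}'s underlying observation that all stars of order $b$ are rotationally isomorphic, so we may take $S_b = S_b^\mathrm{sc}$. Then Proposition \ref{cyc_basic_prop} and Theorem \ref{k_ssc_ord_th} give
\[
    R_\mathrm{cyc}(K_a, S_b) = R_\mathrm{cyc}(K_a, S_b^\mathrm{sc}) \le R_\mathrm{ord}(K_a, S_b^\mathrm{sc}) = 1 + (a - 1)(b - 1).
\]

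For the lower bound, Proposition \ref{cyc_basic_prop} gives $R_\mathrm{cyc}(K_a, S_b) \ge R(K_a, S_b)$. Since a star is a tree of order $b$, Chvátal's theorem (with the roles of the two colors swapped, which is harmless by symmetry of the standard Ramsey number) yields $R(K_a, S_b) = R(S_b, K_a) = 1 + (b - 1)(a - 1)$.

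Combining the two bounds proves the claim. The only point needing care is the identification of the cyclic problem for an arbitrary star with that for the start-central star. This holds because the cyclic embedding condition only depends on the graph up to rotation, as in Proposition \ref{cyc_rot_inv_prop}, and any star of order $b$ rotates to $S_b^\mathrm{sc}$. Alternatively, one can avoid rotations entirely: for stars (and complete graphs) every injective map is increasing up to a cyclic permutation after a suitable relabeling of the leaves, so the cyclic and standard problems coincide. With that, the equality $R_\mathrm{cyc} = R$ follows directly from Chvátal's theorem. The trivial cases $\min\{a, b\} = 1$ need no separate treatment, since both formulas give $1$.
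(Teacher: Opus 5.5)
Your proof is correct, though your main route is not quite the paper's. The paper observes that for $K_a$ versus a star the cyclic and standard problems coincide: any monochromatic clique or star can be relabeled so that the embedding is increasing up to a cyclic permutation. Hence $R_\mathrm{cyc}(K_a, S_b) = R(K_a, S_b)$, and Chv\'atal's theorem gives both bounds at once. This is exactly the alternative in your last paragraph.

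Your primary argument is different. It uses Chv\'atal's theorem only through the trivial inequality $R \le R_\mathrm{cyc}$. It then takes the upper bound from Theorem~\ref{k_ssc_ord_th}, after rotating $S_b$ to $S_b^\mathrm{sc}$ via Proposition~\ref{cyc_rot_inv_prop}. This is close to the alternative the paper mentions in passing (Proposition~\ref{cyc_basic_prop}, Theorem~\ref{one_more_th} and Theorem~\ref{k_ssc_ord_th}), except that you take the lower bound from Chv\'atal rather than from Theorem~\ref{one_more_th}.

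What each route buys:
\begin{itemize}
\item Your version never needs the relabeling argument for the nontrivial direction $R_\mathrm{cyc} \le R$, only rotation invariance. The cost is a dependence on the inductive ordered result.
\item The paper's version avoids ordered Ramsey numbers entirely. It also makes visible that cyclic order plays no role for this pair of graphs.
\end{itemize}

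As an aside, for stars the lower bound does not need the full strength of Chv\'atal's theorem. Take $a-1$ color-$2$ cliques of order $b-1$ and join them by color-$1$ edges. Then color $1$ is $(a-1)$-partite, so it contains no $K_a$, and color $2$ has maximum degree $b-2$, so it contains no $S_b$.
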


\noindent
We mention in passing that Corollary~\ref{k_ssc_cyc_th} can also be proved through Proposition \ref{cyc_basic_prop}, Theorem \ref{one_more_th} and Theorem~\ref{k_ssc_ord_th}. Therefore, the standard, ordered and cyclic Ramsey numbers of complete graphs versus start-central stars are identical.

\subsection{Nested matchings versus all classes}

In the last subsection, we deal with nested matchings and investigate the ordered and cyclic Ramsey numbers where at least one argument is a nested matching. To begin, we combine nested matchings with themselves, in which case the ordered Ramsey numbers can be explicitly determined.

\begin{theorem}\label{mnest_mnest_ord_th}
    For any even $a, b \ge 2$, we have $R_\mathrm{ord}(M_a^\mathrm{nest}, M_b^\mathrm{nest}) = a + b - 2$.
\end{theorem}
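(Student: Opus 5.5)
We prove the two inequalities $R_\mathrm{ord}(M_a^\mathrm{nest}, M_b^\mathrm{nest}) \le a + b - 2$ and $R_\mathrm{ord}(M_a^\mathrm{nest}, M_b^\mathrm{nest}) \ge a + b - 2$ separately. Throughout, write $a = 2p$ and $b = 2q$ with $p, q \ge 1$. The upper bound comes from a peeling induction on the outermost edge, in the spirit of the proofs of Theorems~\ref{ssc_cmon_ord_th} and \ref{k_ssc_ord_th}. The lower bound comes from an explicit coloring by edge length.

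\emph{Upper bound.} We induct on $p + q$ and show that every $2$-edge-coloring of $K_n$ with $n = 2p + 2q - 2$ contains an ordered copy of $M_{2p}^\mathrm{nest}$ in color $1$ or of $M_{2q}^\mathrm{nest}$ in color $2$.

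\begin{itemize}
\item \emph{Base case $p = 1$.} Here $M_2^\mathrm{nest} = K_2$ and $n = 2q$. If some edge has color $1$, we are done. Otherwise all edges have color $2$, and the edges $\{ i, 2q - 1 - i \}$ form a color-$2$ copy of $M_{2q}^\mathrm{nest}$. The case $q = 1$ is symmetric.
\item \emph{Inductive step, $p, q \ge 2$.} Look at the outermost edge $\{ 0, n - 1 \}$. Suppose it has color $1$. The inner vertices $1, \ldots, n - 2$ form $2(p - 1) + 2q - 2$ consecutive vertices, so by induction they contain an ordered color-$1$ copy of $M_{2p-2}^\mathrm{nest}$ or a color-$2$ copy of $M_{2q}^\mathrm{nest}$. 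In the first case, every vertex of that copy lies strictly between $0$ and $n - 1$. Adding the edge $\{ 0, n - 1 \}$ as a new outermost pair therefore gives an ordered color-$1$ copy of $M_{2p}^\mathrm{nest}$. If $\{ 0, n - 1 \}$ has color $2$, the symmetric argument applies.
\end{itemize}

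\emph{Lower bound.} Take $n = a + b - 3 = 2p + 2q - 3$. Color the edge $\{ u, v \}$ with $u < v$ by color $1$ exactly when $v - u \le 2p - 2$, and by color $2$ otherwise. The key observation concerns an ordered copy of $M_{2m}^\mathrm{nest}$, with pairs $x_1 < \cdots < x_m < y_m < \cdots < y_1$. Each pair is nested strictly inside the previous one, so the length $y_i - x_i$ grows by at least $2$ from one pair to the next outward.

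\begin{itemize}
\item \emph{No color-$1$ copy of $M_{2p}^\mathrm{nest}$.} The innermost pair has length at least $1$, so the outermost pair has length at least $1 + 2(p - 1) = 2p - 1$. That edge has color $2$.
\item \emph{No color-$2$ copy of $M_{2q}^\mathrm{nest}$.} In color $2$ the innermost pair has length at least $2p - 1$, so the outermost pair has length at least $2p - 1 + 2(q - 1) = n$. But no edge of $K_n$ is longer than $n - 1$.
\end{itemize}

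Together the two bounds give $R_\mathrm{ord}(M_a^\mathrm{nest}, M_b^\mathrm{nest}) = a + b - 2$.

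The main obstacle is finding the right lower-bound coloring. Edge length is the natural invariant here, because nesting forces the lengths to increase by at least $2$ per level. Once that is chosen, both halves of the argument reduce to simple counting.
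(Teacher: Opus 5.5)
Your proof is correct. The upper bound is the paper's argument in inductive form. Unrolling your peeling step examines exactly the concentric edges $\{ j, n - 1 - j \}$ for $0 \le j < \frac{n}{2}$. The paper simply applies the pigeonhole principle to these $\frac{a}{2} + \frac{b}{2} - 1$ edges at once.

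The genuine difference is the lower-bound coloring. The paper gives color $2$ exactly to the edges inside the block $\{ \frac{a}{2} - 1, \ldots, n - \frac{a}{2} \}$ of $b - 1$ consecutive vertices. A color-$2$ copy of $M_b^\mathrm{nest}$ is then excluded by a vertex count: every vertex of the copy meets a color-$2$ edge, so it lies in the block. A color-$1$ copy of $M_a^\mathrm{nest}$ is excluded because its innermost pair has $\frac{a}{2} - 1$ vertices of the copy on each side, so that pair falls inside the block.

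Your length-threshold coloring instead disposes of both colors with a single invariant: nested pair lengths grow by at least $2$ per level. That is more uniform, and it is a distance-type coloring in the same spirit as the circular-distance colorings the paper later uses for the cyclic analogue.

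What the paper's block coloring buys is robustness in color $2$. It avoids, in color $2$, every graph on $b$ vertices without isolated vertices. So the same construction also gives $R_\mathrm{ord}(M_a^\mathrm{nest}, H) \ge a + |H| - 2$ for any such $H$. Your coloring genuinely needs the nested structure in color $2$. For example, with $a = b = 4$ and $n = 5$, it contains the crossing matching $\{0, 3\}, \{1, 4\}$ in color $2$.
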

\begin{proof}
    First, we show that $R_\mathrm{ord}(M_a^\mathrm{nest}, M_b^\mathrm{nest}) \ge a + b - 2$ by constructing a $2$-edge-coloring of $K_n$ with $n = a + b - 3$ that contains neither $M_a^\mathrm{nest}$ as a monochromatic subgraph of color $1$ nor $M_b^\mathrm{nest}$ as a monochromatic subgraph of color $2$, with a consistent vertex ordering.
    
    Let $G$ be the $2$-edge-coloring of $K_n$ such that an edge between any two distinct vertices $u, v \in \{ 0, 1, 2, \ldots, \linebreak n - 1 \}$ is colored with color $2$ if and only if $\frac{a}{2} - 1 \le u, v \le n - \frac{a}{2}$. If $G$ contains a forbidden subgraph $M_b^\mathrm{nest}$ of color~$2$, then all of its vertices lie in $\{ \frac{a}{2} - 1, \frac{a}{2}, \ldots, n - \frac{a}{2} \}$ since the remaining vertices are incident only to edges of color $1$. However, 
    the set $\{ \frac{a}{2} - 1, \frac{a}{2}, \ldots, n - \frac{a}{2} \}$ contains $n - a + 2 = b - 1 < b$ elements, yielding a contradiction.

    Now, suppose that $G$ contains a forbidden subgraph $M_a^\mathrm{nest}$ of color~$1$. Let $u_1 v_1, u_2 v_2, \ldots, u_\frac{a}{2} v_\frac{a}{2}$ be the edges of this copy of $M_a^\mathrm{nest}$, so that
    \[
        u_1 < u_2 < \cdots < u_\frac{a}{2} < v_\frac{a}{2} < \cdots < v_2 < v_1 .
    \]
    Observe that $u_\frac{a}{2} \ge \frac{a}{2} - 1$ and $v_\frac{a}{2} \le n - \frac{a}{2}$. This implies that the edge $u_\frac{a}{2} v_\frac{a}{2}$ is of color $2$, which is impossible. Therefore, $R_\mathrm{ord}(M_a^\mathrm{nest}, M_b^\mathrm{nest}) \ge a + b - 2$.

    We complete the proof by showing that any $2$-edge-coloring of $K_n$ with $n = a + b - 2$ contains $M_a^\mathrm{nest}$ as a monochromatic subgraph of color~$1$ or $M_b^\mathrm{nest}$ as a monochromatic subgraph of color~$2$, with a consistent vertex ordering. Consider the edges of the form $\{ j, n - 1 - j \}$ with $j \in \{ 0, 1, 2, \ldots, \frac{n}{2} - 1 \}$. Since there are $\frac{n}{2} = \frac{a}{2} + \frac{b}{2} - 1$ of them, it follows that at least $\frac{a}{2}$ of these edges are of color~$1$ or at least $\frac{b}{2}$ of them are of color~$2$. This implies that the given $2$-edge-coloring of $K_n$ contains a forbidden subgraph, hence $R_\mathrm{ord}(M_a^\mathrm{nest}, M_b^\mathrm{nest}) \le a + b - 2$.
\end{proof}

The computational results for the cyclic Ramsey numbers of nested matchings are shown in Table \ref{mnest_mnest_cyc_tab}. Motivated by the computed values, we partially determine these numbers. To this end, we need the following lemma on the largest monochromatic nested matchings in complete graphs with a circulant edge coloring.

\begin{lemma}\label{circulant_lemma}
    Let $n, k \in \mathbb{N}$ and let $G$ be a $k$-edge-coloring of $K_n$ such that the automorphism $\begin{pmatrix}\begin{smallmatrix}
    0 & 1 & 2 & \cdots & n - 2 & n - 1\\
    1 & 2 & 3 & \cdots & n - 1 & 0
    \end{smallmatrix}\end{pmatrix}$ is edge-color-preserving. Then, for any selected $c \in \{ 1, 2, \ldots, k \}$, a largest monochromatic nested matching $M$ of color $c$ contained in $G$, with the vertices appearing in cyclic order, can be constructed using the following algorithm:
    \begin{algorithmic}[1]
        \State $v_0 \gets n$, $j \gets 1$, $M \gets \varnothing$
        \While{true}
            \State $u_j \gets j - 1$
            \State $W = \{ w : \mbox{$u_j < w < v_{j - 1}$ and $\{ w, u_j \}$ is of color $c$} \}$
            \If{$W = \varnothing$}
                \State \textbf{break}
            \EndIf
            \State $v_j  \gets \max W$
            \State $M \gets M \cup \{ u_j v_j \}$
            \State $j \gets j + 1$
        \EndWhile
    \end{algorithmic}
\end{lemma}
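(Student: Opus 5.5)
We need to show that the greedy algorithm outputs a nested matching (in cyclic order) of maximum size among monochromatic color-$c$ nested matchings with cyclic vertex order. The plan has three steps: reduce by rotation, establish a structural normal form, and run an exchange argument.

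First, we make precise what a cyclically ordered nested matching is. It is a set of $m$ edges $x_1y_1,\dots,x_my_m$ whose $2m$ endpoints, read cyclically starting from some point, appear as $x_1,x_2,\dots,x_m,y_m,\dots,y_1$. Since $G$ is invariant under the rotation $v\mapsto v+1 \bmod n$, we may rotate such a matching so that $x_1=0$. This uses Proposition~\ref{cyc_rot_inv_prop} in spirit, or directly the edge-color-preserving automorphism. After the rotation the matching becomes an ordinary ordered nested matching:
\[
0=x_1<x_2<\cdots<x_m<y_m<\cdots<y_1\le n-1 .
\]
So it suffices to show that the algorithm produces an ordered nested matching of maximum size among those with $x_1=0$. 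The output is clearly a valid nested matching of color $c$: each $u_j=j-1$ is consecutive, $v_j<v_{j-1}$, and $u_j<v_j$ by the choice of $W$.

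Second, we normalize the left endpoints. Given an optimal matching with $x_1=0$, we would like to replace $x_j$ by $j-1$. Shifting an individual left endpoint changes its colors, however, so we argue via the rotation instead. Consider the edge $x_jy_j$ and compare it with the edge $(j-1)\,y'$ obtained by rotating.

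The cleaner route is an exchange argument by induction on $j$. We prove that for every optimal matching there is an optimal one with $x_i=i-1$ and $y_i\le v_i$ for all $i\le j$. Here is the inductive step. Suppose $x_1,\dots,x_{j-1}$ already equal $0,\dots,j-2$ and $y_{j-1}\le v_{j-1}$. We have $x_j\ge j-1$. Rotate the whole tail $\{x_iy_i: i\ge j\}$ by $-(x_j-(j-1))$. Colors are preserved, $x_j$ moves to $j-1$, the inner nesting is preserved, and all tail vertices move left. So the tail still lies strictly between $j-2$ and $y_{j-1}$; we need the new $x_i$ to stay above $j-2$, which holds because the shift amount is exactly $x_j-(j-1)$ and $x_i\ge x_j$.

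Now $(j-1)y_j'$ is a color-$c$ edge with $j-1<y_j'<y_{j-1}\le v_{j-1}$, so $W\ne\varnothing$ at step $j$ and $y_j'\le \max W=v_j$. This shows that the algorithm runs for at least $m$ iterations, so it outputs at least $m$ edges, proving optimality.

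The main obstacle is the legitimacy of the tail rotation. Rotating only part of the matching is allowed because every edge color depends only on the difference $(v-u)\bmod n$, and the shifted tail stays within the interval $(j-2,\,y_{j-1})$ without wrapping around. I expect to verify this carefully, in particular that after the shift no vertex of the tail collides with or passes the fixed prefix vertices $0,\dots,j-2$ or the right endpoints $y_1,\dots,y_{j-1}$.
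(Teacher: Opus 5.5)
Your proof is correct and follows essentially the same route as the paper: you rotate so that the outermost left endpoint is $0$, then successively rotate the inner tail to the left so that the left endpoints become $0,1,2,\dots$, which preserves colors because the coloring is circulant and the tail does not wrap, and finally you compare each right endpoint with the greedy choice $v_j=\max W$. The only difference is cosmetic. You carry the domination invariant $y_j\le v_j$ and conclude that the algorithm runs for at least $m$ rounds, whereas the paper replaces $y_j$ by $v_j$ outright and ends with $M'$ transformed into a submatching of $M$.
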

\begin{proof}
    Observe that, by construction, $M$ is indeed a monochromatic nested matching of color $c$ contained in $G$ with a consistent vertex cyclic ordering. Let $M'$ be a largest monochromatic nested matching of color $c$ contained in $G$, with the vertices appearing in cyclic order. We carry out the proof by showing that $M'$ can be transformed without changing the number of edges into another monochromatic nested matching of color $c$ that is contained in $M$.

    If no edge in $G$ is of color $c$, we are done, so we assume that at least one edge is of color $c$. Since $G$ has a circulant edge coloring, the edge colors in $M'$ do not change regardless of how we rotate $M'$. Therefore, we can assume without loss of generality that the edges of $M'$ are $u_1' v_1', u_2' v_2', \ldots, u_p' v_p'$ for some $p \in \mathbb{N}$, so that
    \[
        0 = u_1' < u_2' < \cdots < u_p' < v_p' < \cdots < v_2' < v_1' .
    \]
    Note that $u_1' = u_1$. In addition, we can replace $v_1'$ with the largest vertex whose edge connecting it to $u_1 = 0$ is of color $c$, without disrupting the nested matching structure. Thus, without loss of generality, we can write $v_1' = v_1$.

    After these initial transformations, the following two-step procedure should be applied iteratively for each $j = 2, 3, \ldots, p$. If $u_j' \neq u_j$, then all the vertices $u_j', u_{j + 1}', \ldots, u_p', v_p', \ldots, v_{j + 1}', v_j'$ should be rotated to the left so that $u_j'$ becomes $u_j = u_{j - 1} + 1$. Note that this operation does not break the nested matching structure, and since the edge coloring of $G$ is circulant, the edge colors also do not change. Subsequently, the newly obtained vertex $v_j'$ should be replaced with the largest vertex below $v_{j - 1}$ whose edge connecting it to $u_j$ is of color $c$. This again keeps the nested matching structure intact. After executing all the described steps, the nested matching $M'$ transforms into a subgraph of $M$, which completes the proof.
\end{proof}

\begin{table}[H]
\centering
\footnotesize
\begin{tabular}{|c||rrrrrrr|}
\hline
\backslashbox{$a$}{$b$} & $4$ & $6$ & $8$ & $10$ & $12$ & $14$ & $16$\\
\hline
\hline
$4$ & $5$ & $8$ & $9$ & $12$ & $13$ & $16$ & $17$\\
$6$ & & $10$ & $12$ & $14$ & $16$ & $18$ & $20$\\
$8$ & & & $13$ & $16$ & $17$ & $20$ & $21$\\
$10$ & & & & $18$ & $20$ & $22$ & $24$\\
$12$ & & & & & $21$ & $24$ & $\ge 25$\\
$14$ & & & & & & $\ge 26$ & $\ge 27$\\
\hline
\end{tabular}
\caption{Cyclic Ramsey numbers $R_\mathrm{cyc}(M_a^\mathrm{nest}, M_b^\mathrm{nest})$ with even $a, b$, alongside $4 \le a \le b$, $a \le 14$ and $b \le 16$.}
\label{mnest_mnest_cyc_tab}
\end{table}

\noindent
We are now in a position to partially determine the cyclic Ramsey numbers of nested matchings.

\begin{theorem}\label{mnest_mnest_cyc_thp}
    Let $a, b \ge 2$ be even, with at least one of them not divisible by four. Then $R_\mathrm{cyc}(M_a^\mathrm{nest}, M_b^\mathrm{nest}) = a + b - 2$.
\end{theorem}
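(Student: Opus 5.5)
We obtain the upper bound for free and get the lower bound from a circulant coloring whose color classes are unions of ``difference pairs''. Since the ordered number is known and the cyclic number can only be smaller, all the work is in a coloring of $K_{a+b-3}$ with no cyclically ordered $M_a^\mathrm{nest}$ in color $1$ and no cyclically ordered $M_b^\mathrm{nest}$ in color $2$.

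\textbf{Upper bound.} By Proposition~\ref{cyc_basic_prop} and Theorem~\ref{mnest_mnest_ord_th}, $R_\mathrm{cyc}(M_a^\mathrm{nest}, M_b^\mathrm{nest}) \le R_\mathrm{ord}(M_a^\mathrm{nest}, M_b^\mathrm{nest}) = a+b-2$. The cyclic Ramsey number is symmetric under swapping the two arguments together with the colors. Hence I will assume without loss of generality that $a \equiv 2 \pmod 4$, i.e.\ $\frac{a}{2}-1$ is even.

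\textbf{Setup of the construction.} Let $n = a+b-3$, which is odd, and set $m = \frac{n-1}{2} = \frac{a+b-4}{2}$. Color the edge $\{u,v\}$, $u<v$, according to whether the difference $v-u \in \{1,\ldots,n-1\}$ lies in a set $D_1$ (color $1$) or in its complement $D_2$ (color $2$). Making $D_1$ closed under $d \mapsto n-d$ makes the coloring circulant, so the rotation in Lemma~\ref{circulant_lemma} is edge-color-preserving.

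Partition $\{1,\ldots,n-1\}$ into the $m$ pairs $B_i = \{2i-1, 2i\}$. Since $n$ is odd, the map $d \mapsto n-d$ sends $B_i$ onto $B_{m+1-i}$. So the pairs form orbits of size two, plus a single fixed pair when $m$ is odd. Let $D_1$ be the union of $\frac{1}{2}(\frac{a}{2}-1)$ two-element orbits; this is possible because $\frac{a}{2}-1$ is even and at most $m$, which is exactly where the divisibility hypothesis enters. Then $D_1$ is a symmetric union of $\frac{a}{2}-1$ pairs, and $D_2$ is the union of the remaining $\frac{b}{2}-1$ pairs.

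\textbf{Counting argument.} Suppose a monochromatic nested matching of color $c$ occurs in cyclic order. Rotate it, using the circulant property as in the proof of Lemma~\ref{circulant_lemma}, so that its vertices are $0 = u_1 < u_2 < \cdots < u_p < v_p < \cdots < v_1 \le n-1$. The differences $d_j = v_j - u_j$ satisfy $d_{j+1} \le d_j - 2$, because $u_{j+1} \ge u_j + 1$ and $v_{j+1} \le v_j - 1$, and every $d_j$ lies in $D_c$. Two elements of the same pair $B_i$ differ by $1$, so the $d_j$ lie in distinct pairs. Therefore $p$ is at most the number of pairs in $D_c$: $p \le \frac{a}{2}-1$ for color $1$ and $p \le \frac{b}{2}-1$ for color $2$. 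Hence neither forbidden matching appears, which gives $R_\mathrm{cyc} \ge a+b-2$.

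\textbf{Main obstacle.} The delicate step is choosing $D_1$ to be both symmetric under $d \mapsto n-d$ and a union of exactly $\frac{a}{2}-1$ pairs. This requires $\frac{a}{2}-1$ to be even, or $\frac{b}{2}-1$ to be even after swapping colors, which explains the hypothesis. Another point to verify is that coloring by the raw difference $v-u$ on the rotated representative is well defined, which the symmetry of $D_1$ guarantees. The greedy algorithm of Lemma~\ref{circulant_lemma} could be used instead of the direct chain argument, but the pair-counting bound suffices on its own.
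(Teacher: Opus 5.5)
Your proof is correct. It reaches the lower bound through a different verification than the paper's, though the extremal coloring is essentially the same.

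\textbf{The paper's route.} It colors an edge with color~$1$ iff its circular distance is at most $\frac{a}{2}-1$. It then runs the greedy algorithm of Lemma~\ref{circulant_lemma} in each color and identifies the resulting largest cyclically ordered nested matchings explicitly, finding they have $\frac{a}{2}-1$ and $\frac{b}{2}-1$ edges.

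\textbf{Relation between the colorings.} The paper's coloring is one member of your family. It is the choice
$D_1 = B_1 \cup \cdots \cup B_{(a-2)/4} \cup B_{m+1-(a-2)/4} \cup \cdots \cup B_m = \{1,\ldots,\frac{a}{2}-1\} \cup \{n-\frac{a}{2}+1,\ldots,n-1\}$,
and this needs $\frac{a}{2}-1$ even for exactly your reason.

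\textbf{What your argument buys.} Your observation is that along a nested matching the differences $v_j-u_j$ drop by at least $2$ at each step, so no two lie in a common pair $\{2i-1,2i\}$. This replaces the greedy lemma entirely. It gives the only thing the Ramsey lower bound needs, namely an upper bound on the size of a monochromatic nested matching, in a few lines and without computing the maximizer. It also shows transparently where the divisibility hypothesis enters. Without a fixed pair, a union of pairs invariant under $d \mapsto n-d$ has an even number of pairs, so the $4 \mid a,b$ case cannot be handled this way. Finally, it yields a whole family of extremal colorings rather than one.

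\textbf{What the paper's route buys.} Lemma~\ref{circulant_lemma} is a general tool that gives the exact maximum, not just an upper bound, for an arbitrary circulant coloring. The paper reuses it for the star propositions and for the $a+b-4$ remark. Your pair-counting would in fact verify those constructions too, since in each of them the relevant difference set splits into the required number of pairs of consecutive integers.
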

\begin{proof}    
    By Proposition \ref{cyc_basic_prop} and Theorem \ref{mnest_mnest_ord_th}, we have $R_\mathrm{cyc}(M_a^\mathrm{nest}, M_b^\mathrm{nest}) \le a + b - 2$. Therefore, it suffices to find a $2$-edge-coloring of $K_n$ with $n = a + b - 3$ that contains neither $M_a^\mathrm{nest}$ as a monochromatic subgraph of color~$1$ nor $M_b^\mathrm{nest}$ as a monochromatic subgraph of color~$2$, with the vertices appearing in the same cyclic order. Without loss of generality, we assume that $a \equiv 2 \pmod 4$. Moreover, since the result is trivial for $\min \{ a, b \} = 2$, we assume that $a \ge 6$ and $b \ge 4$.

    Let $G$ be the $2$-edge-coloring of $K_n$ such that the edge between any two distinct vertices $u, v \in \{ 0, 1, 2, \ldots, \linebreak n - 1 \}$ is colored with color $1$ if and only if their circular distance is at most $\frac{a}{2} - 1$, i.e., $|v - u| \le \frac{a}{2} - 1$ or $|v - u| \ge n - \left( \frac{a}{2} - 1 \right)$. We prove that $G$ does not contain a forbidden subgraph using Lemma \ref{circulant_lemma}. For $c = 2$, Lemma~\ref{circulant_lemma} asserts that the nested matching $M$ with the edges $u_1 v_1, u_2 v_2, \ldots, u_{\frac{b}{2} - 1} v_{\frac{b}{2} - 1}$ defined by
    \[
        u_j = j - 1 \quad \mbox{and} \quad v_j = n - \tfrac{a}{2} + 1 - j \quad (j \in \{ 1, 2, \ldots, \tfrac{b}{2} - 1 \}),
    \]
    is a largest monochromatic nested matching of color $2$ contained in $G$, with the vertices in cyclic order. Note that all the edges connecting the vertices between $u_{\frac{b}{2} - 1} + 1 = \frac{b}{2} - 1$ and $v_{\frac{b}{2} - 1} - 1 = \frac{a}{2} + \frac{b}{2} - 2$ are of color~$1$. Since $M$ has $\frac{b}{2} - 1$ edges, this implies that $G$ does not contain $M_b^\mathrm{nest}$ as a monochromatic subgraph of color~$2$, with the vertices appearing in cyclic order.

    Now, we apply Lemma \ref{circulant_lemma} for $c = 1$. In this case, we get that the nested matching $M$ with the edges $u_1 v_1, \linebreak u_2 v_2, \ldots, u_{\frac{a}{2} - 1} v_{\frac{a}{2} - 1}$ defined by
    \[
        u_j = j - 1 \quad \mbox{and} \quad v_j = n - j \quad (j \in \{ 1, 2, \ldots, \tfrac{a - 2}{4} \}),
    \]
    and
    \[
        u_j = j - 1 \quad \mbox{and} \quad v_j = a - 1 - j \quad (j \in \{ \tfrac{a + 2}{4}, \tfrac{a + 6}{4}, \ldots, \tfrac{a}{2} - 1 \}),
    \]
    is a largest monochromatic nested matching of color $1$ contained in $G$, with the vertices in cyclic order. Since this matching has $\frac{a}{2} - 1$ edges, we conclude that $G$ does not contain $M_a^\mathrm{nest}$ as a monochromatic subgraph of color~$1$, with the vertices appearing in cyclic order.
\end{proof}

By employing the idea from the proof of Theorem \ref{mnest_mnest_cyc_thp}, and by setting the target graph order to $n = a + b - 5$ and changing the circular distance edge coloring threshold from $\frac{a}{2} - 1$ to $\frac{a}{2} - 2$, one can prove that $R_\mathrm{cyc}(M_a^\mathrm{nest}, M_b^\mathrm{nest}) \ge a + b - 4$ when $4 \mid a, b$. However, the results given in Table \ref{mnest_mnest_cyc_tab} indicate that this lower bound is not optimal. Instead, the computational results yield the following conjecture.

\begin{conjecture}\label{mnest_mnest_cyc_conj}
    Let $a, b \ge 4$ be divisible by four. Then $R_\mathrm{cyc}(M_a^\mathrm{nest}, M_b^\mathrm{nest}) = a + b - 3$.
\end{conjecture}

Next, we combine nested matchings with monotone paths and consider the corresponding ordered and cyclic Ramsey numbers. Our computational results are shown in Tables \ref{mnest_pmon_ord_tab} and \ref{mnest_pmon_cyc_tab} and suggest the following conjectures.

\begin{conjecture}
    For any even $a \ge 2$ and any $b \in \mathbb{N}$, we have $R_\mathrm{ord}(M_a^\mathrm{nest}, P_b^\mathrm{mon}) = 1 + (a - 1)(b - 1)$.
\end{conjecture}
\begin{conjecture}
    For any even $a \ge 2$ and any $b \ge 3$, we have
    \[
        R_\mathrm{cyc}(M_a^\mathrm{nest}, P_b^\mathrm{mon}) = \begin{cases}
            \frac{ab}{2} - \frac{a}{2} - b + 3, & \mbox{if $a \equiv 0 \pmod 4$},\\
            \frac{ab}{2} - \frac{a}{2} + 1, & \mbox{if $a \equiv 2 \pmod 4$}.
        \end{cases}
    \]
\end{conjecture}

\begin{table}[H]
\centering
\footnotesize
\begin{tabular}{|c||rrrrrrrrr|}
\hline
\backslashbox{$a$}{$b$} & $3$ & $4$ & $5$ & $6$ & $7$ & $8$ & $9$ & $10$ & $11$\\
\hline
\hline
$4$ & $7$ & $10$ & $13$ & $16$ & $19$ & $22$ & $25$ & $28$ & $\ge 31$\\
$6$ & $11$ & $16$ & $21$ & $26$ & $31$ & $\ge 36$ & $\ge 39$ & &\\
$8$ & $15$ & $22$ & $29$ & $\ge 36$ & $\ge 43$ & & & &\\
$10$ & $19$ & $28$ & $\ge 37$ & & & & & &\\
$12$ & $23$ & $\ge 32$ & & & & & & &\\
$14$ & $27$ & & & & & & & &\\
$16$ & $\ge 31$ & & & & & & & &\\
\hline
\end{tabular}
\caption{Ordered Ramsey numbers $R_\mathrm{ord}(M_a^\mathrm{nest}, P_b^\mathrm{mon})$ with even $4 \le a \le 16$ and any $3 \le b \le 11$.}
\label{mnest_pmon_ord_tab}
\end{table}

\begin{table}[H]
\centering
\footnotesize
\begin{tabular}{|c||rrrrrrrrrrrrrrrrrr|}
\hline
\backslashbox{$a$}{$b$} & $3$ & $4$ & $5$ & $6$ & $7$ & $8$ & $9$ & $10$ & $11$ & $12$ & $13$ & $14$ & $15$ & $16$ & $17$ & $18$ & $19$ & $20$\\
\hline
\hline
$4$ & $4$ & $5$ & $6$ & $7$ & $8$ & $9$ & $10$ & $11$ & $12$ & $13$ & $14$ & $15$ & $16$ & $17$ & $18$ & $19$ & $20$ & $21$\\
$6$ & $7$ & $10$ & $13$ & $16$ & $19$ & $22$ & $25$ & $\ge 28$ & & & & & & & & & &\\
$8$ & $8$ & $11$ & $14$ & $17$ & $20$ & $23$ & $\ge 26$ & $\ge 29$ & & & & & & & & & &\\
$10$ & $11$ & $16$ & $21$ & $\ge 23$ & $\ge 28$ & & & & & & & & & & & & &\\
$12$ & $12$ & $17$ & $\ge 22$ & $\ge 26$ & & & & & & & & & & & & & &\\
$14$ & $15$ & $22$ & $\ge 25$ & $\ge 27$ & & & & & & & & & & & & & &\\
$16$ & $16$ & $\ge 23$ & $\ge 26$ & & & & & & & & & & & & & & &\\
\hline
\end{tabular}
\caption{Cyclic Ramsey numbers $R_\mathrm{cyc}(M_a^\mathrm{nest}, P_b^\mathrm{mon})$ with even $4 \le a \le 16$ and any $3 \le b \le 20$.}
\label{mnest_pmon_cyc_tab}
\end{table}

If we replace the monotone paths with the alternating paths, then the 
growth of the ordered and cyclic Ramsey numbers appears to be linear; see Tables \ref{mnest_palt_ord_tab} and \ref{mnest_palt_cyc_tab}. Despite this, both of these Ramsey numbers do not seem to exhibit an easily discernible structure. Similarly, the ordered and cyclic Ramsey numbers of nested matchings versus monotone cycles seem to follow a pattern that is difficult to decipher; see Tables \ref{mnest_cmon_ord_tab} and \ref{mnest_cmon_cyc_tab}. For further results on the particular case of $R_\mathrm{ord}(M_a^\mathrm{nest}, C_3^\mathrm{mon})$, i.e., $R_\mathrm{ord}(M_a^\mathrm{nest}, K_3)$, the reader can refer to the papers \cite{BalPolj2023} and \cite{BalPolj2024} by Balko and Poljak.

\begin{table}[H]
\centering
\footnotesize
\resizebox{\textwidth}{!}{
\begin{tabular}{|c||rrrrrrrrrrrrrrrrrr|}
\hline
\backslashbox{$a$}{$b$} & $3$ & $4$ & $5$ & $6$ & $7$ & $8$ & $9$ & $10$ & $11$ & $12$ & $13$ & $14$ & $15$ & $16$ & $17$ & $18$ & $19$ & $20$\\
\hline
\hline
$4$ & $6$ & $7$ & $8$ & $9$ & $10$ & $11$ & $12$ & $13$ & $14$ & $15$ & $16$ & $17$ & $18$ & $19$ & $20$ & $21$ & $22$ & $23$\\
$6$ & $8$ & $10$ & $11$ & $12$ & $13$ & $14$ & $15$ & $16$ & $17$ & $18$ & $19$ & $20$ & $21$ & $22$ & $23$ & $24$ & $25$ & $26$\\
$8$ & $11$ & $12$ & $14$ & $15$ & $16$ & $17$ & $18$ & $19$ & $20$ & $21$ & $22$ & $23$ & $24$ & $\ge 25$ & $\ge 26$ & $\ge 27$ & &\\
$10$ & $13$ & $15$ & $17$ & $18$ & $19$ & $20$ & $21$ & $22$ & $\ge 23$ & $\ge 24$ & $\ge 25$ & & & & & & &\\
$12$ & $15$ & $17$ & $19$ & $21$ & $22$ & $\ge 23$ & $\ge 24$ & $\ge 25$ & & & & & & & & & &\\
$14$ & $18$ & $20$ & $22$ & $\ge 23$ & $\ge 25$ & $\ge 25$ & & & & & & & & & & & &\\
$16$ & $20$ & $22$ & $\ge 24$ & $\ge 26$ & $\ge 27$ & & & & & & & & & & & & &\\
\hline
\end{tabular}
}
\caption{Ordered Ramsey numbers $R_\mathrm{ord}(M_a^\mathrm{nest}, P_b^\mathrm{alt})$ with even $4 \le a \le 16$ and any $3 \le b \le 20$.}
\label{mnest_palt_ord_tab}
\end{table}

\begin{table}[H]
\centering
\footnotesize
\resizebox{\textwidth}{!}{
\begin{tabular}{|c||rrrrrrrrrrrrrrrrrr|}
\hline
\backslashbox{$a$}{$b$} & $3$ & $4$ & $5$ & $6$ & $7$ & $8$ & $9$ & $10$ & $11$ & $12$ & $13$ & $14$ & $15$ & $16$ & $17$ & $18$ & $19$ & $20$\\
\hline
\hline
$4$ & $4$ & $6$ & $6$ & $8$ & $8$ & $10$ & $10$ & $12$ & $12$ & $14$ & $14$ & $16$ & $16$ & $18$ & $18$ & $20$ & $20$ & $22$\\
$6$ & $7$ & $8$ & $9$ & $10$ & $11$ & $12$ & $13$ & $14$ & $15$ & $16$ & $17$ & $18$ & $19$ & $20$ & $21$ & $22$ & $23$ & $24$\\
$8$ & $8$ & $10$ & $10$ & $12$ & $12$ & $14$ & $14$ & $16$ & $16$ & $18$ & $18$ & $20$ & $\ge 20$ & $\ge 22$ & & & &\\
$10$ & $11$ & $12$ & $13$ & $14$ & $15$ & $16$ & $17$ & $\ge 18$ & $\ge 19$ & $\ge 20$ & & & & & & & &\\
$12$ & $12$ & $14$ & $15$ & $\ge 16$ & $\ge 17$ & $\ge 18$ & & & & & & & & & & & &\\
$14$ & $15$ & $16$ & $17$ & $\ge 18$ & $\ge 19$ & $\ge 20$ & & & & & & & & & & & &\\
$16$ & $16$ & $18$ & $\ge 19$ & $\ge 20$ & $\ge 21$ & & & & & & & & & & & & &\\
\hline
\end{tabular}
}
\caption{Cyclic Ramsey numbers $R_\mathrm{cyc}(M_a^\mathrm{nest}, P_b^\mathrm{alt})$ with even $4 \le a \le 16$ and any $3 \le b \le 20$.}
\label{mnest_palt_cyc_tab}
\end{table}

\begin{table}[H]
\centering
\footnotesize
\begin{tabular}{|c||rrrrrrrrr|}
\hline
\backslashbox{$a$}{$b$} & $3$ & $4$ & $5$ & $6$ & $7$ & $8$ & $9$ & $10$ & $11$\\
\hline
\hline
$4$ & $7$ & $10$ & $13$ & $16$ & $19$ & $22$ & $25$ & $28$ & $\ge 31$\\
$6$ & $11$ & $16$ & $21$ & $26$ & $31$ & $\ge 36$ & $\ge 36$ & &\\
$8$ & $16$ & $22$ & $29$ & $\ge 36$ & $\ge 43$ & $\ge 40$ & & &\\
$10$ & $20$ & $\ge 28$ & $\ge 35$ & & & & & &\\
$12$ & $\ge 25$ & $\ge 31$ & & & & & & &\\
$14$ & $\ge 27$ & & & & & & & &\\
$16$ & $\ge 30$ & & & & & & & &\\
\hline
\end{tabular}
\caption{Ordered Ramsey numbers $R_\mathrm{ord}(M_a^\mathrm{nest}, C_b^\mathrm{mon})$ with even $4 \le a \le 16$ and any $3 \le b \le 11$.}
\label{mnest_cmon_ord_tab}
\end{table}

\begin{table}[H]
\centering
\footnotesize
\begin{tabular}{|c||rrrrrrrrrrrrrrrrrr|}
\hline
\backslashbox{$a$}{$b$} & $3$ & $4$ & $5$ & $6$ & $7$ & $8$ & $9$ & $10$ & $11$ & $12$ & $13$ & $14$ & $15$ & $16$ & $17$ & $18$ & $19$ & $20$\\
\hline
\hline
$4$ & $6$ & $6$ & $7$ & $8$ & $9$ & $10$ & $11$ & $12$ & $13$ & $14$ & $15$ & $16$ & $17$ & $18$ & $19$ & $20$ & $21$ & $22$\\
$6$ & $9$ & $12$ & $15$ & $18$ & $21$ & $24$ & $\ge 27$ & $\ge 27$ & & & & & & & & & &\\
$8$ & $12$ & $14$ & $17$ & $20$ & $\ge 23$ & $\ge 26$ & $\ge 29$ & & & & & & & & & & &\\
$10$ & $15$ & $20$ & $\ge 24$ & $\ge 27$ & $\ge 28$ & & & & & & & & & & & & &\\
$12$ & $18$ & $\ge 22$ & $\ge 25$ & & & & & & & & & & & & & & &\\
$14$ & $\ge 21$ & $\ge 24$ & $\ge 26$ & & & & & & & & & & & & & & &\\
$16$ & $\ge 23$ & $\ge 26$ & & & & & & & & & & & & & & & &\\
\hline
\end{tabular}
\caption{Cyclic Ramsey numbers $R_\mathrm{cyc}(M_a^\mathrm{nest}, C_b^\mathrm{mon})$ with even $4 \le a \le 16$ and any $3 \le b \le 20$.}
\label{mnest_cmon_cyc_tab}
\end{table}

The computational results for the ordered and cyclic Ramsey numbers of nested matchings versus start-central stars are shown in Tables \ref{mnest_ssc_ord_tab} and \ref{mnest_ssc_cyc_tab}. Once again, the ordered numbers appear to behave in a complicated manner. As for the cyclic Ramsey numbers, the computed results suggest the following conjecture.

\begin{conjecture}\label{mnest_ssc_cyc_conj}
    For any even $a \ge 2$ and any $b \ge 2$, we have
    \[
        R_\mathrm{cyc}(M_a^\mathrm{nest}, S_b) = \begin{cases}
            a + b - 3, & \mbox{if $4 \mid a$ and $b$ is odd},\\
            a + b - 2, & \mbox{otherwise}.
        \end{cases}
    \]
\end{conjecture}

\begin{table}[H]
\centering
\footnotesize
\resizebox{\textwidth}{!}{
\begin{tabular}{|c||rrrrrrrrrrrrrrrrrr|}
\hline
\backslashbox{$a$}{$b$} & $3$ & $4$ & $5$ & $6$ & $7$ & $8$ & $9$ & $10$ & $11$ & $12$ & $13$ & $14$ & $15$ & $16$ & $17$ & $18$ & $19$ & $20$\\
\hline
\hline
$4$ & $6$ & $7$ & $9$ & $10$ & $11$ & $13$ & $14$ & $15$ & $16$ & $18$ & $19$ & $20$ & $21$ & $22$ & $24$ & $25$ & $26$ & $27$\\
$6$ & $8$ & $10$ & $12$ & $13$ & $15$ & $16$ & $17$ & $19$ & $20$ & $21$ & $23$ & $24$ & $25$ & $\ge 27$ & $\ge 28$ & $\ge 29$ & &\\
$8$ & $11$ & $13$ & $14$ & $16$ & $18$ & $19$ & $21$ & $22$ & $23$ & $\ge 25$ & $\ge 26$ & $\ge 28$ & & & & & &\\
$10$ & $13$ & $15$ & $17$ & $19$ & $20$ & $22$ & $\ge 24$ & $\ge 25$ & $\ge 27$ & & & & & & & & &\\
$12$ & $15$ & $18$ & $20$ & $21$ & $\ge 23$ & $\ge 25$ & $\ge 26$ & & & & & & & & & & &\\
$14$ & $18$ & $20$ & $22$ & $\ge 24$ & $\ge 26$ & $\ge 27$ & & & & & & & & & & & &\\
$16$ & $20$ & $22$ & $\ge 25$ & $\ge 27$ & $\ge 28$ & & & & & & & & & & & & &\\
\hline
\end{tabular}
}
\caption{Ordered Ramsey numbers $R_\mathrm{ord}(M_a^\mathrm{nest}, S_b^\mathrm{sc})$ with even $4 \le a \le 16$ and any $3 \le b \le 20$.}
\label{mnest_ssc_ord_tab}
\end{table}

\begin{table}[H]
\centering
\footnotesize
\resizebox{\textwidth}{!}{
\begin{tabular}{|c||rrrrrrrrrrrrrrrrrr|}
\hline
\backslashbox{$a$}{$b$} & $3$ & $4$ & $5$ & $6$ & $7$ & $8$ & $9$ & $10$ & $11$ & $12$ & $13$ & $14$ & $15$ & $16$ & $17$ & $18$ & $19$ & $20$\\
\hline
\hline
$4$ & $4$ & $6$ & $6$ & $8$ & $8$ & $10$ & $10$ & $12$ & $12$ & $14$ & $14$ & $16$ & $16$ & $18$ & $18$ & $20$ & $20$ & $22$\\
$6$ & $7$ & $8$ & $9$ & $10$ & $11$ & $12$ & $13$ & $14$ & $15$ & $16$ & $17$ & $18$ & $19$ & $20$ & $21$ & $22$ & $\ge 23$ & $\ge 24$\\
$8$ & $8$ & $10$ & $10$ & $12$ & $12$ & $14$ & $14$ & $16$ & $16$ & $\ge 18$ & $\ge 18$ & $\ge 20$ & $\ge 20$ & $\ge 22$ & $\ge 22$ & & &\\
$10$ & $11$ & $12$ & $13$ & $14$ & $15$ & $16$ & $\ge 17$ & $\ge 18$ & $\ge 19$ & $\ge 20$ & $\ge 21$ & $\ge 22$ & & & & & &\\
$12$ & $12$ & $14$ & $14$ & $16$ & $\ge 16$ & $\ge 18$ & $\ge 18$ & $\ge 20$ & $\ge 20$ & $\ge 21$ & & & & & & & &\\
$14$ & $15$ & $16$ & $17$ & $\ge 18$ & $\ge 19$ & $\ge 20$ & $\ge 21$ & $\ge 21$ & $\ge 23$ & & & & & & & & &\\
$16$ & $16$ & $18$ & $\ge 18$ & $\ge 20$ & $\ge 20$ & $\ge 22$ & $\ge 22$ & $\ge 23$ & & & & & & & & & &\\
\hline
\end{tabular}
}
\caption{Cyclic Ramsey numbers $R_\mathrm{cyc}(M_a^\mathrm{nest}, S_b)$ with even $4 \le a \le 16$ and any $3 \le b \le 20$.}
\label{mnest_ssc_cyc_tab}
\end{table}

Although we do not prove Conjecture \ref{mnest_ssc_cyc_conj}, we apply Lemma \ref{circulant_lemma} to show that the conjectured values are all lower bounds.

\begin{proposition}
    For any $a \ge 4$ divisible by four and any odd $b \ge 3$, we have $R_\mathrm{cyc}(M_a^\mathrm{nest}, S_b) \ge a + b - 3$.
\end{proposition}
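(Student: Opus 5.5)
We want a $2$-edge-coloring of $K_n$ with $n = a + b - 4$ such that:
\begin{itemize}
\item every vertex has color-$2$ degree at most $b - 2$, so that there is no color-$2$ copy of $S_b$ (for the cyclic problem the center of the star is irrelevant);
\item there is no color-$1$ nested matching with $\frac{a}{2}$ edges in cyclic order.
\end{itemize}
As in the proof of Theorem \ref{mnest_mnest_cyc_thp}, we will use a circulant coloring, so that Lemma \ref{circulant_lemma} reduces the matching condition to running the greedy algorithm. Since $4 \mid a$ and $b$ is odd, $n$ is odd. On an odd cycle every circular distance $1, 2, \ldots, \frac{n-1}{2}$ contributes exactly $2$ to every vertex degree. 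So a circulant coloring is determined by a set $D$ of color-$1$ distances, and the color-$2$ degree is $n - 1 - 2|D|$. The star condition $n - 1 - 2|D| \le b - 2$ becomes $2|D| \ge a - 3$, i.e.\ $|D| \ge \frac{a}{2} - 1$, since $2|D|$ is even. We therefore take $|D| = \frac{a}{2} - 1$. This gives color-$2$ degree exactly $b - 3$, and the star is excluded.

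The first candidate, $D = \{1, \ldots, \frac{a}{2} - 1\}$ (the coloring of Theorem \ref{mnest_mnest_cyc_thp}), fails, as a quick run of the algorithm of Lemma \ref{circulant_lemma} shows.
\begin{itemize}
\item For $j \le \frac{a}{4}$ it produces $u_j = j - 1$ and $v_j = n - j$. These are the wrap-around edges, of circular distance $2j - 1$.
\item Afterwards it switches to the linear edges $u_j = j - 1$, $v_j = a - j$.
\item It continues until $u_j < v_j$ fails, which happens only after $j = \frac{a}{2}$.
\end{itemize}
This yields exactly $\frac{a}{2}$ edges. This is precisely why the divisibility by four matters, and it shows that the distance set must be chosen more cleverly.

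The plan is therefore to keep $\frac{a}{2} - 2$ small distances $\{1, \ldots, \frac{a}{2} - 2\}$ and to replace the distance $\frac{a}{2} - 1$ with one large distance $d^*$. The first choice to try is $d^* = \frac{n-1}{2}$, with nearby values as fallbacks.
\begin{itemize}
\item The small distances alone support only about $\frac{a}{4} - \frac{1}{2}$ wrap-around steps followed by linear steps. Heuristically this gives $\frac{a}{2} - 2$ or so edges: the same computation as above, with $d$ reduced by one, loses one edge.
\item The single long chord should add at most one edge, because a nested family can use at most one chord of a given large length in a consistent way.
\end{itemize}
To verify this, we run the algorithm of Lemma \ref{circulant_lemma} with $c = 1$ explicitly. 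At each step $v_j$ is the maximum over the two candidate ranges: the small-distance neighbors $u_j \pm t$ (mod $n$) and the two long-chord neighbors $u_j \pm d^*$. We will write down closed forms for $(u_j, v_j)$ and check that the loop breaks after exactly $\frac{a}{2} - 1$ iterations.

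The main obstacle is this last verification. We must pick $d^*$ so that the long chord is used at most once in the greedy run, and so that it does not ``reset'' $v_j$ to a large value that lets the small distances produce additional edges. If $d^* = \frac{n-1}{2}$ misbehaves, the fallback is to search over $d^*$, or to move a different single distance. The tables (e.g.\ $n = 4$ for $a = 8$, $b = 3$) can guide the choice. Small cases such as $a = 4$, where $D$ consists of the single distance $d^*$, are handled directly.
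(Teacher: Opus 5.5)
\textbf{Gap: the key verification is never carried out.} Your reduction is sound. Here $n=a+b-4$ is odd, and a circulant coloring with color-$1$ distance set $D$ has color-$2$ degree $n-1-2|D|$, so $|D|=\frac a2-1$ excludes $S_b$. Your greedy run also correctly shows that $D=\{1,\dots,\frac a2-1\}$ yields $\frac a2$ nested color-$1$ edges when $4\mid a$. The problem is that the argument stops exactly where the proof has to happen. You never run the algorithm of Lemma~\ref{circulant_lemma} for $D=\{1,\dots,\frac a2-2\}\cup\{\frac{n-1}{2}\}$, and you leave open that this choice may fail and need to be replaced by a search. The two bullet points offered instead are heuristics, not arguments. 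One of them is also off: lowering the top distance from $\frac a2-1$ to $\frac a2-2$ makes it even, and the greedy count then drops by two (from $\frac a2$ to $\frac a2-2$), not by one. As written, nothing rules out a color-$1$ copy of $M_a^\mathrm{nest}$.

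\textbf{How to close it.} Your candidate does work, and both heuristics can be made rigorous. Put $2m=\frac a2-2$ and $L=\frac{n-1}{2}$.
\begin{itemize}
\item At most one chord of length $L$: rotate a color-$1$ nested matching so that $u_1<\dots<u_p<v_p<\dots<v_1$. The spans $v_i-u_i$ then drop by at least $2$ at each step, while a chord of circular length $L$ has span $L$ or $L+1$, so at most one edge of length $L$ occurs.
\item The rest has at most $2m$ edges: the remaining edges form a color-$1$ nested matching in the circulant coloring with distance set $\{1,\dots,2m\}$. For that coloring, Lemma~\ref{circulant_lemma} gives $v_j=n-j$ for $j\le m$ and then $v_j=4m+1-j$, stopping after exactly $2m$ edges (using $n=4m+b>4m$).
\end{itemize}
Hence every color-$1$ nested matching has at most $2m+1=\frac a2-1$ edges.

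\textbf{Comparison with the paper.} The paper takes a simpler route by swapping the roles of the colors. Color~$2$ gets the short distances $1,\dots,\frac{b-3}{2}$, and color~$1$ gets the $\frac a2-1$ long distances $\frac{b-1}{2},\dots,\frac{n-1}{2}$. The greedy run is then a single phase, $v_j=n-\frac{b-1}{2}+1-j$. At $j=\frac a2$ one has $v_{j-1}-u_j=\frac{b-1}{2}$, so only color-$2$ distances remain and the loop stops. No parity or divisibility-by-four effect appears at all.
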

\begin{proof}
    The statement trivially holds for $b = 3$, so we assume that $b \ge 5$. Let $G$ be the $2$-edge-coloring of $K_n$ with $n = a + b - 4$ such that the edge between any two distinct vertices is colored with color $2$ if and only if their circular distance is at most $\frac{b - 3}{2}$. In this case, $G$ has a circulant edge coloring, and each vertex is incident to $b - 3$ edges of color $2$ and $a - 2$ edges of color $1$. It is straightforward to see that $G$ does not contain $S_b$ as a monochromatic subgraph of color $2$. In addition, by applying Lemma \ref{circulant_lemma} to $G$ for $c = 1$, we conclude that a largest monochromatic nested matching of color $1$ contained in $G$, with the vertices in cyclic order, has $\frac{a}{2} - 1$ edges. Therefore, $G$ does not contain $M_a^\mathrm{nest}$ as a monochromatic subgraph of color~$1$, with the vertices in cyclic order. Thus, $R_\mathrm{cyc}(M_a^\mathrm{nest}, S_b) \ge a + b - 3$.
\end{proof}

\begin{proposition}
    Let $a \ge 2$ be even and $b \ge 2$ such that $a \equiv 2 \pmod 4$ or $b$ is even. Then $R_\mathrm{cyc}(M_a^\mathrm{nest}, S_b) \ge a + b - 2$.
\end{proposition}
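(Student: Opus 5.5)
We use the upper-bound-free half of the argument from the proof of Theorem~\ref{mnest_mnest_cyc_thp}: it suffices to exhibit a $2$-edge-coloring $G$ of $K_n$ with $n = a + b - 3$ containing neither $S_b$ in color~$2$ nor $M_a^\mathrm{nest}$ in color~$1$ in cyclic order. We take $G$ circulant, so that Lemma~\ref{circulant_lemma} computes a largest color-$1$ cyclic nested matching. We also make the color-$2$ graph $(b-2)$-regular, which immediately excludes $S_b$ in color~$2$. Since the statement is trivial when $\min\{a, b\} = 2$, we assume $a \ge 4$ and $b \ge 3$, and split according to the parity of $b$.

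\emph{Case $b$ even.} Color an edge with color~$2$ if and only if its circular distance is at most $\frac{b-2}{2}$. Then every vertex has color-$2$ degree $b-2$, so there is no color-$2$ star $S_b$. Color~$1$ consists of the circular distances $d$ with $\frac b2 \le d \le n - \frac b2$. Running the algorithm of Lemma~\ref{circulant_lemma} with $c = 1$ gives $u_j = j-1$ and $v_j = n - \frac b2 + 1 - j$, as long as the gap $v_j - u_j = n - \frac b2 + 2 - 2j$ is at least $\frac b2$. Since $n - b + 2 = a - 1$, this condition is $j \le \frac{a-1}{2}$, and so the algorithm stops after $\frac a2 - 1$ edges. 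We also check that each gap is at most $n - \frac b2$, so that these edges really are of color~$1$ and $v_j$ is indeed the maximum of $W$. This rules out $M_a^\mathrm{nest}$ in color~$1$.

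\emph{Case $b$ odd and $a \equiv 2 \pmod 4$.} Here $n$ is even and $b - 2$ is odd. Color~$2$ is given by circular distance at most $\frac{b-3}{2}$ together with the antipodal distance $\frac n2$. This is again $(b-2)$-regular and circulant. Color~$1$ consists of the circular distances in $[\frac{b-1}{2}, \frac n2 - 1]$, together with their complements modulo $n$. The algorithm starts with $v_1 = n - \frac{b-1}{2}$ and $v_j = v_{j-1} - 1$, so the gap is $n - \frac{b-1}{2} + 2 - 2j$. This gap would equal the forbidden value $\frac n2$ exactly when $2j = \frac a2 + 1$, i.e. at $j = \frac{a+2}{4}$, which is an integer precisely because $a \equiv 2 \pmod 4$.

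At that step the algorithm is forced to take $v_j = v_{j-1} - 2$. From then on the gaps decrease by $2$ per step starting one lower than before. Without the skip the process would run for exactly $j \le \frac a2$ steps, since the threshold is $2j \le n - b + 3 = a$. The extra unit lost at the skip, combined with the parity of the gaps, should cut this down to exactly $\frac a2 - 1$ edges. This mirrors the two-phase matching ($v_j = n - j$, then $v_j = a - 1 - j$) that appeared in Theorem~\ref{mnest_mnest_cyc_thp}.

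The main obstacle is this last bookkeeping step: tracking the algorithm across the antipodal skip and verifying that the last admissible gap falls just below $\frac{b-1}{2}$, so that exactly one edge is lost. I would write out $v_j$ explicitly in the two phases, $j < \frac{a+2}{4}$ and $j \ge \frac{a+2}{4}$, and compute the terminating index directly. I would also double-check that no later gap hits $\frac n2$ again or lands in the complementary color-$2$ range.
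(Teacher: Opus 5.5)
Your proof is correct, and your case $b$ even is exactly the paper's second construction (color~$2$ iff circular distance at most $\frac{b}{2}-1$), with a correct trace of Lemma~\ref{circulant_lemma}. For $b$ odd and $a \equiv 2 \pmod 4$ you take a different route.

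\textbf{The paper's route.} It does not split on the parity of $b$ in this case. It reuses the coloring of Theorem~\ref{mnest_mnest_cyc_thp}: color~$1$ iff circular distance at most $\frac{a}{2}-1$. The color-$1$ graph is $(a-2)$-regular, so color~$2$ is automatically $(b-2)$-regular on $n=a+b-3$ vertices. The two-phase trace already done there ($v_j=n-j$, then $v_j=a-1-j$) never uses the parity of $b$, so nothing new has to be computed.

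\textbf{Your route.} You keep one template throughout: color~$2$ is a $(b-2)$-regular circulant of short chords. For odd $b-2$ this forces the antipodal chord, and therefore a fresh trace of the algorithm. That trace does close, and quickly.
\begin{itemize}
\item At $j_0=\frac{a+2}{4}$, the candidate of gap $\frac n2$ is rejected. The next candidate, of gap $\frac n2-1$, is accepted, since $\frac n2-1\ge\frac{b-1}{2}$ (this uses $a\ge 4$).
\item From then on, each gap is exactly one less than in the skip-free run: $g_j=n-\frac{b-1}{2}+1-2j$.
\item These gaps decrease inside $[\frac{b-1}{2},\frac n2-1]$. So they never revisit $\frac n2$ or the color-$2$ range $[n-\frac{b-3}{2},n-1]$.
\item The condition $g_j\ge\frac{b-1}{2}$ becomes $2j\le n-b+2=a-1$, i.e.\ $j\le\frac a2-1$.
\item Once the top candidate's gap falls below $\frac{b-1}{2}$, every smaller gap is color~$2$. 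So $W=\varnothing$, and the algorithm stops after exactly $\frac a2-1$ edges.
\end{itemize}

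So your route works, at the cost of this extra bookkeeping. The paper's route gets the case for free from an earlier computation.
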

\begin{proof}
    Let $n = a + b - 3$. First, suppose that $a \equiv 2 \pmod 4$, and let $G$ be the $2$-edge-coloring of $K_n$ such that the edge between any two distinct vertices is colored with color $1$ if and only if their circular distance is at most $\frac{a}{2} - 1$. As shown in the proof of Theorem \ref{mnest_mnest_cyc_thp}, $G$ does not contain $M_a^\mathrm{nest}$ as a monochromatic subgraph of color~$1$, with the vertices in cyclic order, when $b$ is even. The same argument applies when $b$ is odd. In addition, since each vertex of $G$ is incident to $a - 2$ edges of color~$1$ and $b - 2$ edges of color~$2$, it follows that $G$ also does not contain $S_b$ as a monochromatic subgraph of color~$2$. Therefore, $R_\mathrm{cyc}(M_a^\mathrm{nest}, S_b) \ge n + 1 = a + b - 2$.

    Now, suppose that $b$ is even, and let $G$ be the $2$-edge-coloring of $K_n$ such that the edge between any two distinct vertices is colored with color $2$ if and only if their circular distance is at most $\frac{b}{2} - 1$. In this case, each vertex of $G$ is again incident to $a - 2$ edges of color $1$ and $b - 2$ edges of color $2$, hence $G$ does not contain $S_b$ as a monochromatic subgraph of color $2$. By applying Lemma \ref{circulant_lemma} to $G$ for $c = 1$, it follows that a largest monochromatic nested matching of color~$1$ contained in $G$, with the vertices in cyclic order, has $\frac{a}{2} - 1$ edges. Therefore, $G$ does not contain $M_a^\mathrm{nest}$ as a monochromatic subgraph of color~$1$, with the vertices in cyclic order, implying $R_\mathrm{cyc}(M_a^\mathrm{nest}, S_b) \ge n + 1 = a + b - 2$.
\end{proof}

We conclude the section by investigating the ordered and cyclic Ramsey numbers of nested matchings versus complete graphs. The resulting values are given in Tables \ref{mnest_k_ord_tab} and \ref{mnest_k_cyc_tab}. Although the ordered numbers do not seem to exhibit an easily discernible structure, the following conjecture on the cyclic Ramsey numbers naturally arises.

\begin{conjecture}\label{mnest_k_cyc_conj}
    For any even $a \ge 2$ and any $b \ge 2$, we have $R_\mathrm{cyc}(M_a^\mathrm{nest}, K_b) = \frac{ab}{2}$.
\end{conjecture}

\begin{table}[H]
\centering
\footnotesize
\begin{tabular}{|c||rrrrrrrr|}
\hline
\backslashbox{$a$}{$b$} & $3$ & $4$ & $5$ & $6$ & $7$ & $8$ & $9$ & $10$\\
\hline
\hline
$4$ & $7$ & $10$ & $13$ & $16$ & $19$ & $22$ & $25$ & $\ge 28$\\
$6$ & $11$ & $16$ & $21$ & $26$ & $\ge 31$ & $\ge 36$ & &\\
$8$ & $16$ & $23$ & $\ge 31$ & $\ge 36$ & $\ge 42$ & & &\\
$10$ & $20$ & $\ge 29$ & $\ge 35$ & & & & &\\
$12$ & $\ge 25$ & $\ge 31$ & & & & & &\\
$14$ & $\ge 27$ & & & & & & &\\
$16$ & $\ge 30$ & & & & & & &\\
\hline
\end{tabular}
\caption{Ordered Ramsey numbers $R_\mathrm{ord}(M_a^\mathrm{nest}, K_b)$ with even $4 \le a \le 16$ and any $3 \le b \le 10$.}
\label{mnest_k_ord_tab}
\end{table}

\begin{table}[H]
\centering
\footnotesize
\begin{tabular}{|c||rrrrrrrr|}
\hline
\backslashbox{$a$}{$b$} & $3$ & $4$ & $5$ & $6$ & $7$ & $8$ & $9$ & $10$\\
\hline
\hline
$4$ & $6$ & $8$ & $10$ & $12$ & $14$ & $16$ & $18$ & $20$\\
$6$ & $9$ & $12$ & $15$ & $18$ & $21$ & $\ge 24$ & $\ge 26$ &\\
$8$ & $12$ & $16$ & $20$ & $\ge 23$ & $\ge 26$ & $\ge 28$ & &\\
$10$ & $15$ & $\ge 20$ & $\ge 24$ & $\ge 26$ & & & &\\
$12$ & $18$ & $\ge 23$ & $\ge 25$ & $\ge 27$ & & & &\\
$14$ & $\ge 21$ & $\ge 25$ & $\ge 26$ & & & & &\\
$16$ & $\ge 23$ & $\ge 27$ & & & & & &\\
\hline
\end{tabular}
\caption{Cyclic Ramsey numbers $R_\mathrm{cyc}(M_a^\mathrm{nest}, K_b)$ with even $4 \le a \le 16$ and any $3 \le b \le 10$.}
\label{mnest_k_cyc_tab}
\end{table}

\section{Reinforcement learning}\label{sc_rl}

In addition to using Kissat to compute the ordered and cyclic Ramsey numbers presented in Section \ref{sc_results}, we also demonstrate how RL can be used to obtain lower bounds on Ramsey numbers. The idea of applying RL to extremal graph theory was recently introduced in the pioneering work of Wagner \cite{Wagner2021} and has since attracted considerable attention.

In particular, Ghebleh et al.\ \cite{GheYaKaSte2026} reimplemented Wagner's original approach to improve its stability and readability, and subsequently used the new framework to obtain a series of theoretical results \cite{GheYaKaSte2025A, GheYaKaSte2025B, GheYaKaSte2026}. These include new lower bounds on small standard Ramsey numbers involving complete bipartite graphs, wheel graphs and book graphs \cite{Radziszowski}. Concurrently, Angileri et al.\ \cite{Angileri2025A} systematized the previous work by implementing four distinct RL environments specialized for graph theory, and later used their framework \cite{Angileri2025B} to contribute to the study of Brouwer's conjecture \cite{BrouHae2012}.

In this work, we employ the recently developed RLGT framework \cite{RLGT}, which builds upon previous approaches through its ease of use, clean interface and modular design. We use the Deep Cross-Entropy agent together with the Linear Build environment, as this combination achieved the strongest performance in the reported experimental results.

In our setting, the agent iteratively interacts with the environment to construct a $2$-edge-coloring of $K_n$ by executing $\binom{n}{2}$ binary actions. Each action determines whether the next edge is colored with color~$1$ or color~$2$, with the edges arranged in a predetermined order, such as the row-major order described in Section \ref{sc_sat}. The action selection mechanism is governed by a configurable policy network. An episode consisting of $\binom{n}{2}$ actions produces a single $2$-edge-coloring of $K_n$.

Each constructed graph is assigned a score value, and the agent aims to learn how to build colorings that maximize this score. The Deep Cross-Entropy method \cite{BoKroMaRu2005, Rubinstein1997} follows an evolutionary strategy: in each iteration, it generates a population of graphs and selects a configurable number of top-performing episodes. These episodes are then used to train the policy network, which influences how future actions are selected. Additionally, another subset of top-performing episodes can be carried over to the next generation.

For more details on standard RL terminology, the reader can refer to the standard textbooks \cite{Lapan2020, Powell2022, SuBa2018, Szepesvari2010}. Further technical details on using the RLGT framework are available in the framework documentation \cite{RLGTDoc}.

With the framework in place, the remaining step is to define the score function. As noted by Ghebleh et al.\ \cite{GheYaKaSte2025B}, in the context of standard Ramsey numbers, a natural choice is
\begin{equation}\label{sfun_std}
    -\sum_{(w_0, w_1, w_2, \ldots, w_{|H_1| - 1}) \in \mathcal{S}_1} \Iverson{ \bigwedge_{uv \in E(H_1)} \neg x_{w_u, w_v} } - \sum_{(w_0, w_1, w_2, \ldots, w_{|H_2| - 1}) \in \mathcal{S}_2} \Iverson{ \bigwedge_{uv \in E(H_2)} x_{w_u, w_v} },
\end{equation}
where the $x_{u, v}$ are the Boolean variables described in Section \ref{sc_sat}, $\mathcal{S}_1$ (resp.\ $\mathcal{S}_2$) comprises all the $|H_1|$-tuples (resp.\ $|H_2|$-tuples) of elements in $V(K_n)$ with mutually distinct entries, and $\Iverson\cdot$ denotes the Iverson bracket, i.e.,
\[
    \Iverson P = \begin{cases}
        1, & \mbox{if $P$ holds}, \\
        0, & \mbox{otherwise}.
    \end{cases}
\]

Essentially, \eqref{sfun_std} counts the number of forbidden embeddings of $H_1$ and $H_2$ in a given $2$-edge-coloring of $K_n$, with the negative signs converting a minimization problem into a maximization one. With this choice of score function, the RLGT framework can be used to heuristically search for edge colorings that maximize the score. If the best score equals zero, then we have found a $2$-edge-coloring of $K_n$ that avoids both forbidden monochromatic subgraphs, which yields a lower bound on the corresponding Ramsey number. If the attained best score is negative, then no useful information is obtained. Due to the nondeterministic nature of parameter initialization and action selection, multiple runs may sometimes be required to achieve the desired result.

When investigating ordered and cyclic Ramsey numbers, the same idea can be applied with minor modifications to the score function. For ordered Ramsey numbers, we use
\[
    -\sum_{(w_0, w_1, w_2, \ldots, w_{|H_1| - 1}) \in \mathcal{O}_1} \Iverson{ \bigwedge_{uv \in E(H_1)} \neg x_{w_u, w_v} } - \sum_{(w_0, w_1, w_2, \ldots, w_{|H_2| - 1}) \in \mathcal{O}_2} \Iverson{ \bigwedge_{uv \in E(H_2)} x_{w_u, w_v} },
\]
while for cyclic Ramsey numbers, we use
\[
    -\sum_{(w_0, w_1, w_2, \ldots, w_{|H_1| - 1}) \in \mathcal{C}_1} \Iverson{ \bigwedge_{uv \in E(H_1)} \neg x_{w_u, w_v} } - \sum_{(w_0, w_1, w_2, \ldots, w_{|H_2| - 1}) \in \mathcal{C}_2} \Iverson{ \bigwedge_{uv \in E(H_2)} x_{w_u, w_v} } ,
\]
where $\mathcal{O}_1$, $\mathcal{O}_2$, $\mathcal{C}_1$ and $\mathcal{C}_2$ are defined as in Section \ref{sc_sat}. The only difference is that we consider only the appropriate forbidden embeddings:\ increasing embeddings for ordered Ramsey numbers, and the embeddings increasing up to a cyclic permutation for cyclic Ramsey numbers.

We provide the \texttt{Python} RL training script \texttt{rlgt\_trainer.py}, located in the folder \texttt{src} in \cite{GitHub}. The script uses RLGT to perform agent--environment interactions and computes the Ramsey score functions through an external dynamically linked library implemented in \texttt{C++}. In this library, the main function accepts an array of graphs $G_1, G_2, \ldots, G_p$ and, for each graph $G_j$, counts the number of embeddings $\varphi \colon V(H) \to V(G_j)$, where $H$ is a given pattern graph.

Additionally, the function can be configured to count only increasing embeddings, or embeddings that are increasing up to a cyclic permutation, thereby providing direct support for ordered and cyclic Ramsey number problems. For each $j \in \{ 1, 2, \ldots, p \}$, the function recursively generates all sequences corresponding to possible embeddings $\varphi \colon V(H) \to V(G_j)$ according to the selected counting mode.

The implementation supports graphs of order at most $64$. Graphs are represented using a $64 \times 64$ adjacency matrix stored in bitmask format as $64$ unsigned $64$-bit integers. Bitwise operations on these integers are used to check whether a partially constructed embedding is valid. If the procedure detects that a partial embedding cannot be extended to a full embedding, the recursive call terminates early. The function then returns the total number of full embeddings.

To improve performance, the computation is parallelized. For each of the graphs $G_1, G_2, \ldots, G_p$, a separate task is created and executed within a thread pool to maximize CPU utilization on modern multi-core systems. For more technical details, the reader can refer to the folder \texttt{src/score\_computation} in \cite{GitHub}.

\begin{example}
    By executing the \texttt{Python} RL training script, it is not difficult to find a $2$-edge-coloring of $K_{11}$ that does not contain $P_6^\mathrm{alt}$ as a monochromatic subgraph in the ordered Ramsey number problem. This yields the best possible lower bound $R_\mathrm{ord}(P_6^\mathrm{alt}, P_6^\mathrm{alt}) \ge 12$. By performing analogous computations, we can also obtain the lower bounds $R_\mathrm{ord}(P_6^\mathrm{alt}, P_7^\mathrm{alt}) \ge 13$, $R_\mathrm{ord}(P_6^\mathrm{alt}, P_8^\mathrm{alt}) \ge 14$ and $R_\mathrm{ord}(P_7^\mathrm{alt}, P_7^\mathrm{alt}) \ge 14$. However, these three lower bounds are not optimal, since $R_\mathrm{ord}(P_6^\mathrm{alt}, P_7^\mathrm{alt}) = 14$, $R_\mathrm{ord}(P_6^\mathrm{alt}, P_8^\mathrm{alt}) = 15$ and $R_\mathrm{ord}(P_7^\mathrm{alt}, P_7^\mathrm{alt}) = 15$; see Table~\ref{palt_palt_ord_tab}. If we attempt to prove the corresponding optimal lower bounds, the training procedure typically gets stuck at a best attained reward of $-1$ or $-2$ after sufficiently many learning iterations. Therefore, in these cases the RL-based approach fails and is outperformed by the SAT-based approach. \hfill$\Diamond$
\end{example}

\section{Conclusion}\label{sc_conclusion}

We have introduced the cyclic Ramsey numbers as a natural relaxation of the ordered Ramsey numbers, and computed new small two-color ordered and cyclic Ramsey numbers for several classes of graphs: monotone paths, monotone cycles, alternating paths, reverse alternating paths, start-central stars, complete graphs and nested matchings. All of these numbers were obtained by solving the corresponding SAT problems using the Kissat SAT solver. The computational results then helped us determine all ordered or cyclic Ramsey numbers for several pairs of classes of graphs, together with some bounds on the ordered and cyclic Ramsey numbers where one argument is a connected graph, while the other is a monotone path or a monotone cycle.

Here, we note that while Kissat was typically able to find good lower bounds for both ordered and cyclic Ramsey numbers, it was much more successful in finding upper bounds for the ordered case. This can be explained by the fact that SAT instances for cyclic Ramsey numbers tend to have a noticeably greater number of clauses.

We also explored how RL can be applied to ordered and cyclic Ramsey number problems using the RLGT framework. While RL can sometimes provide useful lower bounds, it does not always find the optimal bound, even for Ramsey numbers below $20$. Moreover, the RL-based approach is not capable of finding upper bounds at all. In contrast, the SAT-based approach typically finds optimal lower bounds below $20$ with ease, and in many cases it can compute the exact Ramsey number, especially for the ordered case. For larger Ramsey numbers, Kissat may struggle to find a precise upper bound, although it often still provides a valid lower bound. Despite these limitations, the partial success of the RL-based approach suggests that it could still be a valuable tool for tackling similar combinatorial problems in the future.

We believe that cyclic Ramsey numbers could be a fruitful topic for future research. While much is known about the asymptotic behavior of standard and ordered Ramsey numbers, the asymptotic behavior of cyclic Ramsey numbers remains largely unexplored. By Proposition \ref{cyc_basic_prop}, a natural question arises regarding whether cyclic Ramsey numbers behave asymptotically more like ordered Ramsey numbers or standard Ramsey numbers. For instance, Theorems \ref{pmon_con_th} and \ref{pmon_con_th_2} show that when one argument is a monotone path and the other is a connected graph, cyclic Ramsey numbers behave asymptotically like ordered Ramsey numbers up to a constant factor. A natural research direction is to investigate whether the same holds for other classes of graphs. We believe that a good starting point for this would be to verify or refute some of Conjectures \ref{pmon_pmon_cyc_conj}--\ref{palt_pralt_cyc_conj}, \ref{cmon_palt_ord_conj}, \ref{cmon_palt_cyc_conj}, \ref{s_pmon_cyc_conj}, \ref{s_palt_cyc_conj}, \ref{k_palt_ord_conj}, \ref{k_palt_cyc_conj}, \ref{mnest_mnest_cyc_conj}--\ref{mnest_ssc_cyc_conj} and \ref{mnest_k_cyc_conj}.

We now show how the different Ramsey-type formulations involving standard, ordered and cyclic Ramsey numbers can be unified within a group-theoretic framework. First, we extend the notion of rotational isomorphism in the following manner. Let $\Sym(n)$ denote the symmetric group on $n$ elements, i.e., the group of all permutations of the set $\{ 0, 1, 2, \ldots, n - 1 \}$, where the group operation is the composition of permutations. For any two graphs $G_1$ and $G_2$ of the same order, and any group $\Gamma \le \Sym(|G_1|)$, we write $G_1 \cong_\Gamma G_2$ if there exists an isomorphism $\varphi \colon V(G_1) \to V(G_2)$ such that $\varphi \in \Gamma$. The relation $\cong_\Gamma$ defined in this way is clearly an equivalence relation. Moreover, given two graphs $G_1$ and $G_2$ with $|G_1| = |G_2| = n$, the relation $G_1 \cong_\rho G_2$ is now equivalent to $G_1 \cong_\Gamma G_2$ with $\Gamma = \left\langle \begin{pmatrix}\begin{smallmatrix}
    0 & 1 & 2 & \cdots & n - 2 & n - 1\\
    1 & 2 & 3 & \cdots & n - 1 & 0
\end{smallmatrix}\end{pmatrix} \right\rangle$, where $\begin{pmatrix}\begin{smallmatrix}
    0 & 1 & 2 & \cdots & n - 2 & n - 1\\
    1 & 2 & 3 & \cdots & n - 1 & 0
\end{smallmatrix}\end{pmatrix}$ is the cyclic shift by one position to the left.

Now, consider graphs $H_1, H_2, \ldots, H_k$ with $k \in \mathbb{N}$ and groups $\Gamma_1, \Gamma_2, \ldots, \Gamma_k$ such that $\Gamma_j \le \Sym(|H_j|)$ for each $j \in \{ 1, 2, \ldots, k \}$. We define the \emph{permutational Ramsey number} $R(H_1^{\Gamma_1}, H_2^{\Gamma_2}, \ldots, H_k^{\Gamma_k})$ as the smallest $n \in \mathbb{N}$ such that every $k$-edge-coloring of $K_n$ contains a monochromatic subgraph $H_j' \cong_{\Gamma_j} H_j$ of color $j$ for some $j \in \{ 1, 2, \ldots, k \}$, with the vertices appearing in the same order as in $H_j'$. By Ramsey's theorem, all of these numbers are well-defined. With this in mind, the standard, ordered and cyclic Ramsey numbers can be viewed as permutational Ramsey numbers corresponding to specific choices of $\Gamma_1, \Gamma_2, \ldots, \Gamma_k$.
\begin{enumerate}[label=\textbf{(\arabic*)}]
    \item The ordered Ramsey number $R_\mathrm{ord}(H_1, H_2, \ldots, H_k)$ is the permutational Ramsey number $R(H_1^{\Gamma_1}, H_2^{\Gamma_2}, \ldots, \linebreak H_k^{\Gamma_k})$, where all the groups $\Gamma_1, \Gamma_2, \ldots, \Gamma_k$ are trivial and contain only the identity permutation.
    \item The standard Ramsey number $R(H_1, H_2, \ldots, H_k)$ is the permutational Ramsey number $R(H_1^{\Gamma_1}, H_2^{\Gamma_2}, \ldots, \linebreak H_k^{\Gamma_k})$, where all the groups $\Gamma_1, \Gamma_2, \ldots, \Gamma_k$ contain all possible permutations, i.e., $\Gamma_j = \Sym(|H_j|)$ for each $j \in \{ 1, 2, \ldots, k \}$.
    \item The cyclic Ramsey number $R_\mathrm{cyc}(H_1, H_2, \ldots, H_k)$ is the permutational Ramsey number $R(H_1^{\Gamma_1}, H_2^{\Gamma_2}, \ldots, \linebreak H_k^{\Gamma_k})$, where all the groups $\Gamma_1, \Gamma_2, \ldots, \Gamma_k$ are cyclic and of the form $\Gamma_j = \left\langle \begin{pmatrix}\begin{smallmatrix}
    0 & 1 & 2 & \cdots & |H_j| - 2 & |H_j| - 1\\
    1 & 2 & 3 & \cdots & |H_j| - 1 & 0
\end{smallmatrix}\end{pmatrix} \right\rangle$ for each $j \in \{ 1, 2, \ldots, k \}$.
\end{enumerate}

We believe that it is natural to ask how different choices of $\Gamma_1, \Gamma_2, \ldots, \Gamma_k$ affect the permutational Ramsey numbers. Moreover, by selecting $\Gamma_1, \Gamma_2, \ldots, \Gamma_k$ according to a specific pattern, it is possible to construct several additional natural generalizations of Ramsey numbers.

\begin{definition}
    The \emph{reflective Ramsey number} $R_\mathrm{ref}(H_1, H_2, \ldots, H_k)$ is the permutational Ramsey number $R(H_1^{\Gamma_1}, H_2^{\Gamma_2}, \ldots, H_k^{\Gamma_k})$, where each group $\Gamma_j$ consists of the identity permutation and the reflection, i.e., $\Gamma_j = \left\langle \begin{pmatrix}\begin{smallmatrix}
    0 & 1 & 2 & \cdots & |H_j| - 1\\
    |H_j|-1 & |H_j|-2 & |H_j|-3 & \cdots & 0
\end{smallmatrix}\end{pmatrix} \right\rangle$ for each $j \in \{ 1, 2, \ldots, k \}$.
\end{definition}
\begin{definition}
    The \emph{dihedral Ramsey number} $R_\mathrm{dih}(H_1, H_2, \ldots, H_k)$ is the permutational Ramsey number $R(H_1^{\Gamma_1}, H_2^{\Gamma_2}, \ldots, H_k^{\Gamma_k})$, where all the groups $\Gamma_1, \Gamma_2, \ldots, \Gamma_k$ are dihedral and of the form
    \[
        \Gamma_j = \left\langle \begin{pmatrix}\begin{smallmatrix}
    0 & 1 & 2 & \cdots & |H_j| - 2 & |H_j| - 1\\
    1 & 2 & 3 & \cdots & |H_j| - 1 & 0
\end{smallmatrix}\end{pmatrix}, \begin{pmatrix}\begin{smallmatrix}
    0 & 1 & 2 & \cdots & |H_j| - 1\\
    |H_j|-1 & |H_j|-2 & |H_j|-3 & \cdots & 0
\end{smallmatrix}\end{pmatrix} \right\rangle
    \]
    for each $j \in \{ 1, 2, \ldots, k \}$. 
\end{definition}
\begin{definition}
    The \emph{alternating Ramsey number} $R_\mathrm{alt}(H_1, H_2, \ldots, H_k)$ is the permutational Ramsey number $R(H_1^{\Gamma_1}, H_2^{\Gamma_2}, \ldots, H_k^{\Gamma_k})$, where $\Gamma_j$ is the alternating group on the set $\{ 0, 1, 2, \ldots, |H_j| - 1 \}$ for each $j \in \{ 1, 2, \ldots, k \}$.
\end{definition}

\noindent
The next observation immediately follows.

\begin{proposition}\label{ref_dih_alt_prop}
For any graphs $H_1, H_2, \ldots, H_k$ with $k \in \mathbb{N}$, we have
\begin{equation}\label{good_chain}
    R(H_1, \ldots, H_k) \le R_\mathrm{dih}(H_1, \ldots, H_k) \le R_\mathrm{cyc}(H_1, \ldots, H_k), R_\mathrm{ref}(H_1, \ldots, H_k) \le R_\mathrm{ord}(H_1, \ldots, H_k),
\end{equation}
and $R(H_1, \ldots, H_k) \le R_\mathrm{alt}(H_1, \ldots, H_k) \le R_\mathrm{ord}(H_1, \ldots, H_k)$.
\end{proposition}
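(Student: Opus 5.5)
The plan is to prove a single monotonicity lemma for permutational Ramsey numbers and then read off every inequality in \eqref{good_chain} and in the alternating chain from inclusions between subgroups of $\Sym(|H_j|)$.

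\textbf{Monotonicity lemma.} If $\Gamma_j \le \Gamma_j' \le \Sym(|H_j|)$ for each $j$, then
\[
R(H_1^{\Gamma_1'}, \ldots, H_k^{\Gamma_k'}) \le R(H_1^{\Gamma_1}, \ldots, H_k^{\Gamma_k}).
\]
To see this, let $n = R(H_1^{\Gamma_1}, \ldots, H_k^{\Gamma_k})$ and take any $k$-edge-coloring of $K_n$. By definition it contains, for some $j$, a monochromatic subgraph of color $j$ that is an order-preserving copy of some $H_j'$ with $H_j' \cong_{\Gamma_j} H_j$. The witnessing isomorphism $\varphi$ lies in $\Gamma_j \subseteq \Gamma_j'$, so $H_j' \cong_{\Gamma_j'} H_j$ as well. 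The same copy therefore witnesses the property for the larger groups. Since every coloring of $K_n$ has this property, the minimal $n$ for the larger groups is at most $n$.

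One point needs care: the property must hold for every $m \ge n$, not just for $m = n$. This follows because a coloring of $K_m$ restricts to a coloring of its first $n$ vertices, and an order-preserving copy found there is still an order-preserving copy in $K_m$. So the minimum is well defined and the lemma compares the right quantities.

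\textbf{Instantiating the lemma.} Using the identifications (1)--(3) listed before the definitions, it remains to check the subgroup inclusions.

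\begin{itemize}
\item The trivial group is contained in the cyclic group $\langle\text{shift}\rangle$ and in the reflection group $\langle\text{reflection}\rangle$. This gives $R_\mathrm{cyc} \le R_\mathrm{ord}$ and $R_\mathrm{ref} \le R_\mathrm{ord}$.
\item Both of these groups are subgroups of the dihedral group generated by the shift and the reflection. This gives $R_\mathrm{dih} \le R_\mathrm{cyc}$ and $R_\mathrm{dih} \le R_\mathrm{ref}$.
\item The dihedral group is contained in $\Sym(|H_j|)$. This gives $R \le R_\mathrm{dih}$.
\item The trivial group is contained in the alternating group, which is contained in $\Sym(|H_j|)$. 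This gives $R \le R_\mathrm{alt} \le R_\mathrm{ord}$.
\end{itemize}

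The only step that needs any attention is the well-definedness remark above. It is handled by restricting a coloring to an initial segment of the vertices. Everything else is immediate from the definitions.
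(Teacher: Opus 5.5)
Your proof is correct and is the argument the paper has in mind when it calls the proposition an immediate observation. Enlarging each $\Gamma_j$ only enlarges the set of admissible monochromatic copies, and every inequality in \eqref{good_chain} and in the $R_\mathrm{alt}$ chain then comes from the subgroup inclusions trivial $\le$ cyclic, reflection $\le$ dihedral $\le \Sym(|H_j|)$ and trivial $\le$ alternating $\le \Sym(|H_j|)$, exactly as you check. Your upward-closure remark is harmless but not needed: once $n = R(H_1^{\Gamma_1}, \ldots, H_k^{\Gamma_k})$ has the property for the larger groups, the minimum for the larger groups is at most $n$ directly.
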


\noindent
We conclude the paper with the following example, which demonstrates that each inequality from \eqref{good_chain} can be strict.

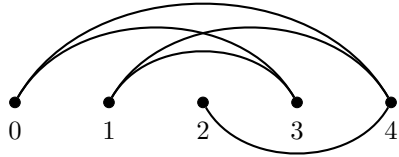
\begin{figure}[t]
\centering
\begin{tikzpicture}
    \tikzstyle{vertex}=[draw,circle,font=\scriptsize,minimum size=4pt,inner sep=1pt,fill=black]
    \tikzstyle{edge}=[draw,thick]

    \foreach \i in {0,1,2,3,4} {
        \node[vertex] (v\i) at ({1.25*\i}, 0) {};
        \node[below=4pt] at (v\i) {$\i$};
    }

    \path[edge, bend left=60] (v0) to (v3);
    \path[edge, bend left=60] (v0) to (v4);
    \path[edge, bend left=60] (v1) to (v3);
    \path[edge, bend left=60] (v1) to (v4);
    \path[edge, bend left=60] (v4) to (v2);
\end{tikzpicture}
\caption{The graph $H$ from Example \ref{ugly_example}.}
\label{ugly_fig}
\end{figure}

\begin{example}\label{ugly_example}
The \texttt{src/sat\_solving} folder in \cite{GitHub} contains a \texttt{Python} script for generating SAT instances, which supports reflective and dihedral Ramsey numbers in addition to the ordered and cyclic ones. By executing this script several times together with Kissat, it is not difficult to compute that $R_\mathrm{ord}(P_7^\mathrm{alt}, P_7^\mathrm{alt}) = 15$, $R_\mathrm{ref}(P_7^\mathrm{alt}, P_7^\mathrm{alt}) = 14$ and $R_\mathrm{cyc}(P_7^\mathrm{alt}, P_7^\mathrm{alt}) = R_\mathrm{dih}(P_7^\mathrm{alt}, P_7^\mathrm{alt}) = 11$.

Now, let $H$ be the unicyclic graph of order five shown in Figure \ref{ugly_fig}. Using the same two programs, we obtain $R_\mathrm{ord}(H, H) = R_\mathrm{ref}(H, H) = 12$, $R_\mathrm{cyc}(H, H) = 10$ and $R_\mathrm{dih}(H, H) = 9$. This means that each inequality from~\eqref{good_chain} can be strict, even in the diagonal case. Moreover, since the groups associated with the cyclic Ramsey numbers tend to have a greater order than those for the reflective Ramsey numbers, it is natural to expect that the cyclic Ramsey numbers are smaller than the reflective ones. \hfill$\Diamond$
\end{example}

\section*{Acknowledgments}

N.\ Bašić is supported in part by the Slovenian Research Agency (research program P1-0294). I.\ Damnjanović is supported by the Ministry of Science, Technological Development and Innovation of the Republic of Serbia, grant number 451-03-34/2026-03/200102, and the Science Fund of the Republic of Serbia, grant \#6767, Lazy walk counts and spectral radius of threshold graphs --- LZWK.

\section*{Conflict of interest}

The authors declare that they have no conflict of interest.

\end{document}